\newcommand{\bydef}{:=}
\newcommand{\id}{\mathrm{id}}
\newcommand{\espan}[1]{\mathrm{span}\left\{#1\right\}}
\newcommand{\matr}[1]{\left(\begin{smallmatrix}#1\end{smallmatrix}\right)}
\newcommand{\diag}{\mathrm{diag}}
\newcommand{\bi}{\mathbf{i}}
\newcommand{\bj}{\mathbf{j}}
\newcommand{\bl}{\mathbf{l}}
\newcommand{\cA}{\mathcal{A}}
\newcommand{\cB}{\mathcal{B}}
\newcommand{\cC}{\mathcal{C}}
\newcommand{\cD}{\mathcal{D}}
\newcommand{\cH}{\mathcal{H}}
\newcommand{\cJ}{\mathcal{J}}
\newcommand{\cK}{\mathcal{K}}
\newcommand{\cL}{\mathcal{L}}
\newcommand{\cO}{\mathcal{O}}
\newcommand{\cQ}{\mathcal{Q}}
\newcommand{\cR}{\mathcal{R}}
\newcommand{\cT}{\mathcal{T}}
\newcommand{\cX}{\mathcal{X}}
\newcommand{\frg}{{\mathfrak g}}
\newcommand{\frf}{{\mathfrak f}}
\newcommand{\fre}{{\mathfrak e}}
\newcommand{\frs}{{\mathfrak s}}
\newcommand{\NN}{\mathbb{N}}
\newcommand{\ZZ}{\mathbb{Z}}
\newcommand{\RR}{\mathbb{R}}
\newcommand{\CC}{\mathbb{C}}
\newcommand{\HH}{\mathbb{H}}
\newcommand{\OO}{\mathbb{O}}
\newcommand{\FF}{\mathbb{F}}
\newcommand{\charac}{\mathrm{char}}
\DeclareMathOperator{\End}{\mathrm{End}}
\DeclareMathOperator{\im}{\mathrm{im}\,}
\DeclareMathOperator{\Aut}{\mathrm{Aut}}
\DeclareMathOperator{\Diag}{\mathrm{Diag}}
\DeclareMathOperator{\Der}{\mathrm{Der}}
\DeclareMathOperator{\supp}{\mathrm{Supp}\,}
\DeclareMathOperator{\alg}{\mathrm{alg}}
\newcommand{\Ad}{\mathrm{Ad}}
\newcommand{\frsl}{{\mathfrak{sl}}}
\newcommand{\frso}{{\mathfrak{so}}}
\newcommand{\frsu}{{\mathfrak{su}}}
\newcommand{\frstu}{{\mathfrak{stu}}}
\newcommand{\tri}{\mathfrak{tri}}
\newcommand{\GL}{\mathrm{GL}}
\newcommand{\SL}{\mathrm{SL}}
\newcommand{\PSL}{\mathrm{PSL}}
\newcommand{\SO}{\mathrm{SO}}
\newcommand{\PSO}{\mathrm{PSO}}
\newcommand{\Spin}{\mathrm{Spin}}
\newcommand{\Br}{\mathrm{Br}}
\newcommand{\subo}{_{\bar 0}}
\newcommand{\subuno}{_{\bar 1}}
\newtheorem{theorem}{Theorem}
\newtheorem{proposition}[theorem]{Proposition}
\newtheorem{lemma}[theorem]{Lemma}
\newtheorem{corollary}[theorem]{Corollary}
\theoremstyle{definition}
\newtheorem{example}[theorem]{Example}
\newtheorem{remark}[theorem]{Remark}
\newenvironment{romanenumerate}
 {\begin{enumerate}
 
 }{\end{enumerate}}
\begin{document}

\title{Maximal finite abelian subgroups of $E_8$}

\author[C.~Draper]{Cristina Draper${}^\star$}
\address{Departamento de Matem\'atica Aplicada, Escuela de las Ingenier\'{\i}as, Universidad de M\'alaga, Ampliaci\'on Campus de Teatinos, 29071 M\'alaga, Spain}
\email{cdf@uma.es}
\thanks{${}^\star$ Supported by the Spanish Ministerio de Econom\'{\i}a y Competitividad---Fondo Europeo de Desarrollo Regional (FEDER) MTM 2010--15223 and by the Junta de Andaluc\'{\i}a grants FQM-336, FQM-1215 and FQM-246}

\author[A.~Elduque]{Alberto Elduque${}^\dagger$}
\address{Departamento de Matem\'{a}ticas
 e Instituto Universitario de Matem\'aticas y Aplicaciones,
 Universidad de Zaragoza, 50009 Zaragoza, Spain}
\email{elduque@unizar.es}
\thanks{${}^\dagger$ Supported by the Spanish Ministerio de Econom\'{\i}a y Competitividad---Fondo Europeo de Desarrollo Regional (FEDER) MTM2010-18370-C04-02 and by the Diputaci\'on General de Arag\'on---Fondo Social Europeo (Grupo de Investigaci\'on de \'Algebra)}

\subjclass[2010]{Primary 17B25; Secondary 17B40, 20G15}

\keywords{Maximal finite abelian subgroups, fine gradings, $E_8$}

\date{}

\begin{abstract}
The maximal finite abelian subgroups, up to conjugation, of the simple algebraic group of type $E_8$ over an algebraically closed field of characteristic $0$ are computed. This is equivalent to the determination of the fine gradings on the simple Lie algebra of type $E_8$ with trivial neutral homogeneous component.
\end{abstract}

\maketitle

\setlength{\unitlength}{1mm}


\section{Introduction}

A systematic study of the gradings by abelian groups on the simple Lie algebras was initiated by Patera and Zassenhaus in \cite{PZ}. For the classical simple Lie algebra over an algebraically closed field of characteristic $0$, the fine gradings were classified in \cite{EldFineClassical}, for the exceptional simple algebras they were classified in \cite{DM_G2} and \cite{BT_G2} for $G_2$, in \cite{DM_F4} for $F_4$ (see also \cite{EK} and \cite{Cris_NonCompu}) and in \cite{DV} for $E_6$. The recent monograph \cite{EK13} collects, among other things, all these results and extensions to prime characteristic.

The problem of the classification of fine gradings, up to equivalence, on a simple Lie algebra over an algebraically closed field of characteristic $0$ contains in particular the problem of classifying the maximal finite abelian subgroups of the group of automorphisms of the algebra, up to conjugation.

The goal of this paper is the solution to this problem for the simple Lie algebra $\fre_8$ of type $E_8$, whose group of automorphisms is the exceptional simple algebraic group of type $E_8$.

\smallskip

A related problem is considered in \cite{Yu}, where the abelian subgroups $F$ of the compact (real) simple Lie groups $G$ satisfying the condition
\[
\dim\frg_0^F=\dim F,
\]
where $\frg_0$ is the Lie algebra of $G$ and $\frg_0^F$ is the subalgebra of fixed elements by the action of $F$, are studied.
This class of abelian subgroups present nice functorial properties exploited in \cite{Yu}, and it comprises the class of the maximal finite abelian subgroups ($\dim F=0$). The close relationship between compact Lie groups and complex reductive linear algebraic groups allows, in principle, to extract from \cite{Yu} the list of the maximal finite abelian subgroups of a simple linear algebraic group over $\CC$. However, some of the arguments in \cite{Yu} are not yet complete and this task is not easy.

Our approach works over arbitrary algebraically closed fields of characteristic $0$ and uses recent results on gradings on simple Lie algebras. We believe it has an independent interest.

\smallskip

The main result of the paper may be summarized in the following theorem.

\begin{theorem}\label{th:main_result}
Let $\FF$ be an algebraically closed field of characteristic $0$. Then, up to conjugation, the list of maximal finite abelian subgroups of the exceptional simple Lie group of type $E_8$ consists of:
\begin{romanenumerate}
\item Four elementary abelian groups, isomorphic to $\ZZ_2^9$, $\ZZ_2^8$, $\ZZ_3^5$ and $\ZZ_5^3$.
\item Three more subgroups, isomorphic to $\ZZ_6^3$, $\ZZ_4^3\times\ZZ_2^2$ and $\ZZ_4\times\ZZ_2^6$.
\end{romanenumerate}
\end{theorem}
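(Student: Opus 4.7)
The plan is to exploit the standard correspondence between abelian group gradings on $\fre_8$ and diagonalizable subgroups of $G=\Aut(\fre_8)$: a finite abelian $F\leq G$ produces, through its character group $\widehat F$, an eigenspace grading on $\fre_8$, and $F$ is maximal among finite abelian subgroups if and only if the grading is fine \emph{and} the neutral homogeneous component $\fre_8^F$ vanishes. Indeed, if $Z_G(F)^0$ were positive-dimensional, a nontrivial torsion element of a maximal torus of $Z_G(F)^0$ would enlarge $F$, and conversely any proper refinement of the grading would enlarge $F$. So the problem reduces to classifying, up to equivalence, the fine gradings on $\fre_8$ whose neutral homogeneous component is zero, together with their universal grading groups.

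First I would assemble a complete list of such fine gradings from explicit model constructions. The Cartan grading, with $\dim\fre_8^F=8$, is discarded at once. The remaining candidates arise from well-known descriptions of $\fre_8$: from the $D_8$-plus-spin decomposition (yielding the Griess--Thompson $\ZZ_2^9$ grading); from the $\ZZ_3$-grading $\fre_8=\fre_6\oplus V\oplus V^*\oplus\frsl_3\oplus(\text{further tensors})$ (yielding the $\ZZ_3^5$ grading); from the decomposition into $A_4$-modules (yielding the $\ZZ_5^3$ grading); from Tits constructions involving octonion algebras and the Albert algebra; and from pairs of symmetric composition algebras via a Cayley--Dickson-type construction $\CD(\cS,\cS')$. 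In each case the neutral component of the induced grading splits as a direct sum of the neutral components of the constituent gradings, reducing the vanishing condition to combinatorial data on the building blocks, whose fine gradings are already classified in \cite{EldFineClassical}, \cite{DM_F4}, \cite{DV} and \cite{EK13}.

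After filtering by triviality of $\fre_8^F$ and reading off the universal grading groups, the seven isomorphism classes listed in the statement appear as universal grading groups of these fine gradings. The hard part is proving \emph{completeness}: that no further maximal finite abelian subgroup arises. For this I would argue by induction on $\lvert F\rvert$ using prime-order elements. Pick $f\in F$ of prime order $p$; by Kac's classification of finite-order automorphisms the identity component $Z_G(f)^0$ is one of a short list of reductive subgroups, namely those of types $D_8$ or $E_7\times A_1$ (for $p=2$), $E_6\times A_2$ or $A_8$ (for $p=3$), $A_4\times A_4$ (for $p=5$), and the analogous diagrams for $p=7$. A maximal finite abelian subgroup of $G$ containing $f$ is in particular a maximal finite abelian subgroup of $Z_G(f)$ containing $f$ in its centre, and these can be enumerated by descending to the simpler factors and invoking the already known classifications for classical and lower-rank exceptional groups. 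Sorting the resulting lists at $p=2,3,5,7$ and identifying subgroups conjugate in $G$ even when arising from distinct centralizers closes the argument and singles out precisely the seven isomorphism classes.
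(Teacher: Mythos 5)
Your opening reduction to fine gradings with $\fre_8^F=0$ matches the paper, and assembling candidates from model constructions is also in the paper's last section. But your completeness argument has a genuine gap: you list only the centralizers $D_8$, $E_7A_1$, $E_6A_2$, $A_8$, $A_4A_4$ (``and the analogous diagrams for $p=7$''), yet these are exactly the centralizers of prime-order elements $f$ with \emph{semisimple} fixed subalgebra $\fre_8^f$, and you never say why all other conjugacy classes of order-$p$ elements may be ignored. The missing idea is the paper's Lemma~\ref{le:gtheta_semisimple}: since $E_8$ is connected and simply connected and $Q$ is a maximal \emph{finite} quasitorus, $C_G(\theta)$ is connected reductive for every $\theta\in Q$, $Z(C_G(\theta))^\circ$ is a torus, and that torus lies inside $Q$, hence is trivial; therefore $\fre_8^\theta$ is semisimple for every $\theta\in Q$. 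Without this, your recursion is not exhaustive, and in fact it is this lemma that eliminates $p=7$ outright (order-$7$ elements exist in $E_8$ but their Kac diagrams mark more than one node, so the fixed subalgebra always has a central torus), so invoking ``the analogous diagrams for $p=7$'' signals the gap rather than filling it.

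The second weak point is the inductive step ``descending to the simpler factors.'' The centralizer $Z_G(f)$ is not $\Aut(\frh)$ for a smaller simple $\frh$; it is a quotient such as $(E_7\times\SL_2)/\mu_2$ or $\SL_9/\mu_3$, and maximal finite abelian subgroups of such a quotient are not products of maximal finite abelian subgroups of the factors — the central isogeny couples the two. That coupling is exactly what produces $\ZZ_6^3$, $\ZZ_4^3\times\ZZ_2^2$, $\ZZ_4\times\ZZ_2^6$ instead of direct products, and it is what the paper controls explicitly through $\ker\Phi$ and $\ker\Psi$ in each case. The paper also proceeds top-down: having bounded the exponent at $6$, it quotes Griess for the $p$-elementary abelian cases and then analyzes the groups containing a maximal-order element ($6$ or $4$), using the $\ZZ_6$- or $\ZZ_4$-grading to decompose $\fre_8$ into $\frg_{\bar 0}$-irreducibles and pin down $Q$ via division gradings and Brauer invariants. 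Your bottom-up plan can be made to work, but you would have to (a) prove the semisimplicity statement above to cut down the prime-order classes, and (b) replace the hand-wave over quotients of products by the $\Phi$/$\Psi$-type bookkeeping.
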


The maximal elementary abelian $p$-subgroups of the algebraic groups have been obtained in \cite{Griess}, so the goal of this paper is to show the existence and uniqueness of the subgroups in item (ii) in the Theorem.

\medskip

Let us start with some definitions.

Let $A$ be an abelian group, an \emph{$A$-grading} on a nonassociative  (i.e., not necessarily associative) algebra $\cA$  over a field $\FF$ is a vector space decomposition
\[
\Gamma:\cA=\bigoplus_{a\in A} \cA_a
\]
such that $\cA_a \cA_b\subset \cA_{ab}$ for all $a,b\in A$.
If such a decomposition is fixed, we will refer to $\cA$ as an \emph{$A$-graded algebra}. The subspaces $\cA_a$ are said to be the \emph{homogeneous components} of $\Gamma$ and
the nonzero elements $x\in\cA_a$ are called {\em homogeneous of degree $a$}; we will write $\deg x=a$.
The {\em support} of $\Gamma$ is the set $\supp\Gamma\bydef\{a\in A : \cA_a\neq 0\}$.

Let
\[
\Gamma: \cA=\bigoplus_{a\in A} \cA_a\quad\text{and}\quad\Gamma':\cB=\bigoplus_{b\in B} \cB_b
\]
be two gradings on algebras, with supports $S$ and $T$, respectively.
$\Gamma$ and $\Gamma'$ are said to be \emph{equivalent} if there exists an isomorphism of algebras $\psi\colon\cA\to\cB$ and a bijection $\alpha\colon S\to T$ such that $\psi(\cA_s)=\cB_{\alpha(s)}$ for all $s\in S$. Any such $\psi$ will be called an {\em equivalence} of $\Gamma$ and $\Gamma'$.

Given a group grading $\Gamma$ on an algebra $\cA$, there are many groups $A$ such that $\Gamma$, regarded as a decomposition into a direct sum of subspaces such that the product of any two of them lies in a third one, can be realized as an $A$-grading, but there is one distinguished group among them \cite{PZ}.
We will say that an abelian group $U$ is a \emph{universal group of $\Gamma$} if, for any other realization of $\Gamma$ as an $A$-grading, there exists a unique homomorphism $U\to A$ that restricts to identity on $\supp\Gamma$.

One shows that the universal group, which we denote by $U(\Gamma)$, exists and depends, up to isomorphism, only on the equivalence class of $\Gamma$. Indeed, $U(\Gamma)$ is generated by $S=\supp\Gamma$ with defining relations $s_1s_2=s_3$ whenever $0\ne\cA_{s_1}\cA_{s_2}\subset\cA_{s_3}$ ($s_i\in S$).

Given gradings $\Gamma:\cA=\bigoplus_{a\in A}\cA_a$ and $\Gamma'\,\colon\,\cA=\bigoplus_{b\in B}\cA'_b$, we say that $\Gamma'$ is a {\em coarsening} of $\Gamma$, or that $\Gamma$ is a {\em refinement} of $\Gamma'$, if for any $a\in A$ there exists $b\in B$ such that $\cA_a\subset\cA'_b$. The coarsening (or refinement) is said to be {\em proper} if the inclusion is proper for some $a\in \supp\Gamma$.  A grading $\Gamma$ is said to be {\em fine} if it does not admit a proper refinement.

Over algebraically closed fields of characteristic zero, the classification of fine gradings on $\cA$ up to equivalence is the same as the classification of maximal diagonalizable subgroups (i.e., maximal quasitori) of $\Aut(\cA)$ up to conjugation (see e.g. \cite{PZ}). More precisely, given a grading $\Gamma$ on the algebra $\cA$ by the group $A$, let $\hat A$ be its group of characters (homomorphisms $A\rightarrow \FF^\times$). Any $\chi\in \hat A$ acts as an automorphism of $\cA$ by means of $\chi.x=\chi(a)x$ for any $a\in A$ and $x\in \cA_a$. In case $A$ is the universal group of $\Gamma$, this allows us to identify $\hat A$ with a quasitorus (the direct product of a torus and a finite subgroup) of the algebraic group $\Aut(\cA)$. This quasitorus is the subgroup $\Diag(\Gamma)$ consisting of the automorphisms $\varphi$ of $\cA$ such that the restriction of $\varphi$ to any homogeneous component is the multiplication by a (nonzero) scalar. (See \cite[\S 1.4]{EK13}.)

Conversely, given a quasitorus $Q$ of $\Aut(\cA)$, we can identify $Q$ with the group of characters $\hat A$ for $A$ the group of homomorphisms (as algebraic groups) $Q\rightarrow \FF^\times$. Then $Q$ induces an $A$-grading of $\cA$, where $\cA_a=\{x\in \cA: \chi(x)=a(\chi)x\ \forall\chi\in Q\}$ for any $a\in A$.  In this way \cite{PZ} the fine gradings on $\cA$, up to equivalence, correspond to the conjugacy classes in $\Aut(\cA)$ of the maximal quasitori (or maximal abelian diagonalizable subgroups) of $\Aut(\cA)$.

\smallskip

Let $\frg$ be a finite dimensional simple Lie algebra over an algebraically closed field $\FF$ of characteristic $0$, let $Q$ be a maximal quasitorus of $\Aut(\frg)$ and let $\Gamma$ be the associated fine grading. Then $Q=T\times F$ for a torus $T$ and a finite subgroup $F$.

If $T\ne 0$ or, equivalently, if the free rank of the universal group of $\Gamma$ is infinite, then $\Gamma$ induces a grading by a not necessarily reduced root system \cite{Eld_Fine} and it is determined by a fine grading on the coordinate algebra of the grading by the root system. Associative, alternative, Jordan or structurable algebras appear as coordinate algebras. In a sense, the classification of the fine gradings whose associated quasitori are not finite is reduced to the classification of some fine gradings on certain nonassociative algebras.

We are left then with the case in which $Q$ is a maximal finite abelian subgroup. In this case ($T=0$), the neutral homogeneous component of the associated grading $\Gamma$ (i.e., the subalgebra of the elements fixed by the automorphisms in $Q$) is trivial (see, for instance, \cite[Proposition 4.1]{Eld_Fine} or \cite[Corollary 5]{DM_F4}).

Hence, the goal of this paper is equivalent to the classification, up to equivalence, of the fine gradings on the simple Lie algebra $\fre_8$ with trivial neutral homogeneous component.

\smallskip

The next section will be devoted to survey some results on division gradings on matrix algebras, and the concept of Brauer invariant of an irreducible module for a graded semisimple Lie algebra. Section \ref{se:finite_order} will present some preliminary results on maximal finite quasitorus on simply connected algebraic groups, and will show that any maximal finite quasitorus $Q$ of $E_8=\Aut(\fre_8)$ is either $p$-elementary abelian for $p=2$, $3$ or $5$, or its exponent is $6$ or $4$ and it contains a specific automorphism of $\fre_8$ of order $6$ or $4$ (two possibilities here).

The following sections will deal with the different possibilities, assuming $Q$ is not $p$-elementary abelian.

It turns out that the corresponding fine gradings on $\fre_8$ are nicely described in terms of some nonassociative algebras. This will be reviewed in the last section.

\smallskip

From now on, \emph{the ground field $\FF$ will be assumed to be algebraically closed of characteristic zero}. Unadorned tensor products will indicate products over $\FF$.

\bigskip


\section{Division gradings. The Brauer invariant}\label{se:division_grading}

The aim of this section is to survey some well-known results on gradings on algebras of matrices $M_n(\FF)$ and on the induced gradings on the special Lie algebras $\frsl_n(\FF)$, in a way suitable for our purposes. The reader may consult \cite[Chapter~2]{EK13} and the references therein.

These results lead to the notion of the Brauer invariant of an irreducible module for a semisimple Lie algebra (see \cite{EKpr}).

\smallskip

Let $V$ be a vector space over $\FF$, $\dim V\geq 2$ and let $x,y\in\End_\FF(V)$ and $n\in\NN$ such that $x^n\in\FF^\times 1_V$, $y^n\in\FF^\times 1_V$ and $xy=\xi yx$, where $\xi$ is a primitive $n^\text{th}$ root of unity and $1_V$ denotes the identity map on $V$.

Then the elements $x^iy^j$, $0\leq i,j\leq n-1$ are eigenvectors for $\Ad_x$ and $\Ad_y$ ($\Ad_a(b)\bydef aba^{-1}$) with different eigenvalues, so they are linearly independent and the subalgebra $S$ generated by $x$ and $y$: $S=\alg\langle x,y\rangle$, is a simple unital subalgebra of $\End_\FF(V)$ of dimension $n^2$, as any ideal of $S$ is invariant under $\Ad_x$ and $\Ad_y$, so it contains an invertible element $x^iy^j$.

By the Double Centralizer Theorem, the centralizer $C=C_{\End_\FF(V)}(S)$ is simple too and $\End_\FF(V)$ is isomorphic to the tensor product $S\otimes C$. As a module for $S$, $V$ is the tensor product $V=V_1\otimes V_2$, where $V_1$ is the $n$-dimensional irreducible module for $S$, and $S$ acts trivially on $V_2$. Thus $S$ can be identified with $\End_\FF(V_1)$ and $C$ with $\End_\FF(V_2)$.

Multiplying by suitable scalars we may assume $x,y\in\SL(V_1)$ (the special linear group) and hence $x^n=y^n=1$ for odd $n$, or $x^n=y^n=-1$ for even $n$ (the characteristic polynomial of $x$ as an endomorphism of $V_1$ is $\det(\lambda 1_{V_1}-x)=\lambda^n+(-1)^n\det(x)=\lambda^n+(-1)^n$). Take an eigenvector $v\in V_1$ of $x$ with eigenvalue $1$ if $n$ is odd, or eigenvalue $\omega$, with $\omega^2=\xi$ if $n$ is even. In the basis $\{v,y(v),\ldots,y^{n-1}(v)\}$, the matrices of $x$ and $y$ are:
\[
x\leftrightarrow \begin{pmatrix} 1&0&0&\hdots&0\\ 0&\xi&0&\hdots&0\\
0&0&\xi^2&\hdots&0\\
\vdots&\vdots&\vdots&\ddots&\vdots\\ 0&0&0&\hdots&\xi^{n-1}\end{pmatrix},\qquad
y\leftrightarrow \begin{pmatrix} 0&0&\hdots&0&1\\ 1&0&\hdots&0&0\\ 0&1&\hdots&0&0\\
\vdots&\vdots&\ddots&\vdots&\vdots\\ 0&0&\hdots&1&0\end{pmatrix},
\]
for odd $n$, or
\[
x\leftrightarrow \begin{pmatrix} \omega&0&0&\hdots&0\\ 0&\omega^3&0&\hdots&0\\
0&0&\omega^5&\hdots&0\\
\vdots&\vdots&\vdots&\ddots&\vdots\\ 0&0&0&\hdots&\omega^{2n-1}\end{pmatrix},\qquad
y\leftrightarrow \begin{pmatrix} 0&0&\hdots&0&-1\\ 1&0&\hdots&0&0\\ 0&1&\hdots&0&0\\
\vdots&\vdots&\ddots&\vdots&\vdots\\ 0&0&\hdots&1&0\end{pmatrix},
\]
for even $n$. In particular, $x$ and $y$ are unique up to simultaneous conjugation. Moreover, the subgroup of $\Aut\bigl(\End_\FF(V)\bigr)\simeq \PSL(V)$ generated by the commuting automorphisms $\Ad_x$ and $\Ad_y$ is isomorphic to $\ZZ_n^2$ ($\ZZ_n\bydef \ZZ/n\ZZ$).

Write $\cR=\End_\FF(V)$, $\dim V\geq 2$. A grading $\Gamma: \cR=\bigoplus_{a\in A}\cR_a$ by an abelian group $A$ is said to be a \emph{division grading} (and $\cR$ is called a \emph{graded division algebra}), if any nonzero homogeneous element is invertible. This is equivalent to $\dim \cR_e=1$ (see, for instance, \cite[Lemma 2.20]{EK13}). In this case, there is a decomposition $V=V_1\otimes\cdots\otimes V_r$, with $\dim V_i=l_i\geq 2$ for any $i$, such that the support of $\Gamma$ is a subgroup $T$ of $A$ isomorphic to $\ZZ_{l_1}^2\times\cdots\times \ZZ_{l_r}^2$, and there are elements $x_i,y_i\in \SL(V_i)$ with $x_iy_i=\xi_i y_ix_i$, where $\xi_i$ is a fixed primitive $l_i^\text{th}$ root of $1$ such that for $t=(a_1,b_1,\ldots,a_r,b_r)\in \ZZ_{l_1}^2\times\cdots\times \ZZ_{l_r}^2\simeq T$, the homogeneous component of degree $t$ is spanned by
\[
X_t\bydef x_1^{a_1}y_1^{b_1}\otimes\cdots\otimes x_r^{a_r}y_r^{b_r}\in\End_\FF(V_1)\otimes\cdots\otimes\End_\FF(V_r)\simeq\End_\FF(V).
\]
This shows that $\cR$ is a twisted group algebra $\FF^\sigma T$ for a
suitable cocycle. Note that for $t,s\in T$, $X_tX_s=\beta(t,s)X_sX_t$, where $\beta:T\times T\rightarrow \FF^\times$ is the nondegenerate alternating bicharacter such that $\beta(t_i,s_i)=\xi_i$, $\beta(t_i,t_j)=\beta(t_i,s_j)=\beta(s_i,s_j)=1$ for $i\ne j$, where $t_i=\deg(x_i)$, $s_i=\deg(y_i)$ for any $i$. The pair $(T,\beta)$ determines the graded division algebra $\cR$, up to graded isomorphism. (See \cite[\S 2.2]{EK13}.)

\begin{remark}\label{re:Xt}
This grading is obviously fine and the associated maximal quasitorus in $\Aut(\End_\FF(V))\simeq \PSL(V)$ consists of the automorphisms $\Ad_{X_t}$, $t\in T$. (See \cite[Theorem 2.15 and Proposition 2.18]{EK13}.)
\end{remark}

\begin{remark}\label{re:division_grading}
Given a quasitorus $Q$ of $\Aut(\End_\FF(V))\simeq \PSL(V)$, take any $p\in\SL(V)$ such that $\Ad_p\in Q$. Then for any $q\in \SL(V)$ such that $\Ad_q\in Q$, $\Ad_q\Ad_p=\Ad_p\Ad_q$, since $Q$ is abelian, and hence $qpq^{-1}p^{-1}\in\FF^\times 1_V$. Hence $\Ad_q(p)\in\FF^\times p$, and thus $p$ is an eigenvector of the elements in $Q$, i.e., $p$ is a homogeneous element of the grading induced by $Q$.
\end{remark}

\smallskip

The gradings $\Gamma$ by abelian groups on the simple special Lie algebra $\frsl(V)$ are divided into two types depending on the quasitorus $\Diag(\Gamma)$ being contained in the connected component $\Aut(\frsl(V))^{\circ} \simeq \PSL(V)$ (type I) or not (type II). Type I gradings are all restrictions to $\frsl(V)$ of gradings on $\End_\FF(V)$. The gradings of type I with trivial neutral component thus correspond to the gradings on $\cR=\End_\FF(V)$ with $\dim\cR_e=1$, i.e., to the division gradings on $\End_\FF(V)$. Therefore we have the following result:

\begin{theorem}\label{th:slV}
Let $Q$ be a quasitorus of $\PSL(V)\,\bigl(\simeq\Aut(\frsl(V))^{\circ}\bigr)$ such that the neutral component of the induced grading on $\frsl(V)$ is trivial. Then there is a decomposition $V=V_1\otimes\cdots\otimes V_r$, ($\dim V_i=l_i\geq 2$ for any $i$) and elements $x_i,y_i\in\SL(V_i)$ with $x_iy_i=\xi_iy_ix_i$, where $\xi_i$ is a fixed primitive $l_i^\text{th}$ root of $1$, such that
\[
Q=\langle [x_1],[y_1],\ldots,[x_r],[y_r]\rangle\,
\bigl(\cong \ZZ_{l_1}^2\times\cdots\times\ZZ_{l_r}^2\bigr),
\]
where any endomorphism $z_i\in\GL(V_i)$ is identified with the endomorphism $1_{V_1}\otimes\cdots\otimes z_i\otimes\cdots\otimes 1_{V_l}$ (Kronecker product) in $\GL(V)$, and where $[z]$ denotes the class of $z\in\SL(V)$ in $\PSL(V)$.
\end{theorem}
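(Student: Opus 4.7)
The plan is to lift $Q$ from $\PSL(V)$ to $\Aut(\cR)$ with $\cR=\End_\FF(V)$, observe that the hypothesis on $\frsl(V)$ forces the induced grading on $\cR$ to be a division grading, and then invoke the classification just summarized.

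First, $\PSL(V)\cong\Aut(\cR)$ via inner automorphisms, so $Q$ acts on $\cR$ as a quasitorus of automorphisms and induces a grading $\Gamma$ on $\cR$. The decomposition $\cR=\FF\cdot 1_V\oplus\frsl(V)$ is compatible with the action of $Q$ (the trace-zero subspace is $Q$-stable and $1_V$ is fixed), so it is graded. The neutral component of $\Gamma$ is therefore $\cR_e=\FF\cdot 1_V\oplus \frsl(V)_e=\FF\cdot 1_V$ by the hypothesis; hence $\dim\cR_e=1$, i.e., $\Gamma$ is a division grading on $\cR$.

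Next, the structure theorem for division gradings on matrix algebras reviewed before Remark~\ref{re:Xt} provides a tensor decomposition $V=V_1\otimes\cdots\otimes V_r$ with $\dim V_i=l_i\geq 2$ together with elements $x_i,y_i\in\SL(V_i)$ satisfying $x_iy_i=\xi_iy_ix_i$ for primitive $l_i^\text{th}$ roots of unity $\xi_i$, such that the homogeneous components of $\Gamma$ are the one-dimensional subspaces $\FF X_t$ for $t$ in the group $T\cong\ZZ_{l_1}^2\times\cdots\times\ZZ_{l_r}^2\cong\supp\Gamma$.

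Finally, by Remark~\ref{re:Xt} the subgroup $\tilde Q:=\langle[x_1],[y_1],\ldots,[x_r],[y_r]\rangle$ of $\PSL(V)$ realizes this same grading $\Gamma$ and has order $|T|=n^2$, with $n=\dim V$. Since $Q$ acts by scalars on each one-dimensional homogeneous component of $\Gamma$, one has $Q\subseteq\tilde Q$; conversely the $n^2$ components are distinguished by $n^2$ distinct characters of $Q$, which forces $|Q|\geq n^2$. Hence $Q=\tilde Q$, as claimed. The only conceptual step is the reduction in the first paragraph to a division grading via $\dim\cR_e=1$; once this is in hand, the rest is a direct application of the known structure theorem together with a short cardinality count needed to identify $Q$ itself rather than merely a quasitorus containing it.
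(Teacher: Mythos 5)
Your proof is correct and follows the same route as the paper's (which is essentially the short discussion preceding the statement): pass from $\frsl(V)$ to $\cR=\End_\FF(V)$, note that triviality of $\frsl(V)_e$ gives $\dim\cR_e=1$ and hence a division grading, then invoke the structure theorem and Remark~\ref{re:Xt}. Your final cardinality argument identifying $Q$ with the full diagonal group $\tilde Q=\Diag(\Gamma)=\{\Ad_{X_t}:t\in T\}$, rather than merely a subgroup of it, makes explicit a point the paper leaves implicit; it is a worthwhile clarification but does not constitute a different approach.
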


\begin{example} The matrix algebra $M_8(\FF)$ is a graded division algebra in three nonequivalent ways, corresponding to $T=\ZZ_8^2$, $T=\ZZ_4^2\times\ZZ_2^2$, or $T=\ZZ_2^6$. This corresponds to considering $\cR$ as either $\End_\FF(V)$, $\End_\FF(V_1\otimes V_2)$ or $\End_\FF(U_1\otimes U_2\otimes U_3)$, with $\dim V=8$, $\dim V_1=4$, $\dim V_2=\dim U_i=2$, $i=1,2,3$.
\end{example}

\smallskip

Let $\cR$ be the matrix algebra $M_n(\FF)$, and let $\cR=\bigoplus_{a\in A}\cR_a$ be a grading on $\cR$ by an abelian group $A$. Then $\cR$ is isomorphic to $\End_\cD(W)$ where $\cD$ is a graded division algebra and $W$ is a finite dimensional graded right free module over $\cD$. Then $W$ is, up to isomorphism and shift of the grading, the only graded-simple module for $\cR$ (i.e., $W$ is $A$-graded with $\cR_aW_b\subseteq W_{ab}$ for any $a,b\in A$, and there is no proper graded submodule). Hence the isomorphism class of the graded division algebra $\cD$ is determined by the isomorphism class of the graded algebra $\cR$ and it is denoted by $[\cR]$ (so $[\cR]=[\cD]$).

Selecting a homogeneous $\cD$-basis $\{v_1,\ldots,v_k\}$ in $W$, $v_i\in W_{g_i}$, and setting $\tilde W=\mathrm{span}_\FF\left\{v_1,\ldots,v_k\right\}$, we can write $\cR\simeq \cC\otimes\cD$, where $\cC=\End_\FF(\tilde W)$ is a matrix algebra with the induced \emph{elementary grading}, i.e., the grading induced by the grading on its simple module: for any $i,j$, $\deg E_{ij}=g_ig_j^{-1}$, where $E_{ij}$ is the matrix unit that takes $v_j$ to $v_i$ and $v_l$ to $0$ for $l\ne j$.

The arguments above show that the class $[\cR]$ is determined by a  pair $(T,\beta)$, where $T$ is a subgroup of $A$ endowed with a nondegenerate alternating bicharacter $\beta:T\times T\rightarrow \FF^\times$.

Moreover, given two $A$-graded matrix algebras $\cR_1=\End_{\cD_1}(W_1)$ and $\cR_2=\End_{\cD_2}(W_2)$, the tensor product $\cR_1\otimes \cR_2$ is again an $A$-graded matrix algebra. Writing $\cR_i=\cC_i\otimes\cD_i$, $i=1,2$, and $\cD_1\otimes\cD_2=\cC\otimes \cD$ we obtain $\cR_1\otimes\cR_2=\bigl(\cC_1\otimes\cC_2\otimes\cC\bigr)\otimes\cD$, where the first factor has an elementary grading and the second factor a division grading. Then $[\cR_1\otimes\cR_2]=[\cD]$, which depends only on $[\cD_1]$ and $[\cD_2]$. Thus we obtain an abelian group, called the \emph{$A$-graded Brauer group} (see \cite[\S 2]{EKpr}). The behavior of this group mimics the behavior of the classical Brauer group (but this latter one is trivial over algebraically closed fields!!). In particular we have $[\cD]^{-1}=[\cD^\text{op}]$, as $\cD\otimes\cD^\text{op}\cong\End_\FF(\cD)$ as graded algebras, and the grading on $\End_\FF(\cD)$ is elementary (induced by the grading on $\cD$).

\smallskip

Let now $\Gamma:\cL=\bigoplus_{a\in A}\cL_a$ be a grading by the abelian group $A$ of a finite dimensional semisimple Lie algebra $\cL$ and let $V$ be a finite dimensional irreducible module for $\cL$. Assume that the image of the group of characters $\hat A$ in $\Aut(\cL)$ lies in the connected component $\Aut(\cL)^{\circ}$ (i.e., it consists of inner automorphisms). Then \cite[\S 3]{EKpr} the matrix algebra $\End_\FF(V)$ is $A$-graded in a unique way satisfying that the associated surjective homomorphism of associative algebras $\rho:U(\cL)\rightarrow \End_\FF(V)$ ($U(\cL)$ denotes the universal enveloping algebra) is a homomorphism of $A$-graded algebras. Hence $\End_\FF(V)\cong\End_\cD(W)$ for some $\cD$ and $W$ as above, and we write $\Br(V)=[\cD]$. This is called the \emph{Brauer invariant} of the irreducible $\cL$-module $V$.

If $V$ admits an $A$-grading compatible with the action of $\cL$ (i.e., $\cL_aV_b\subseteq V_{ab}$ for any $a,b\in A$), then the (unique) $A$-grading on $\End_\FF(V)$ is induced by the grading on $V$ (it is elementary) and $\cD=\FF$, i.e., the Brauer invariant is trivial. This $A$-grading on $V$ is unique up to a shift. In particular, its homogeneous components are uniquely determined.

Later on, we will make use of the following result:

\begin{lemma}\label{le:important}
Let $\cL=\bigoplus_{\bar r\in\ZZ_n}\cL_{\bar r}$ be a $\ZZ_n$-graded finite dimensional semisimple Lie algebra and let $\Gamma:\cL=\bigoplus_{a\in A}\cL_a$ be a grading on $\cL$ by an abelian group $A$ that refines the $\ZZ_n$-grading (i.e., each nonzero homogeneous component $\cL_a$ is contained in a (unique) subspace $\cL_{\bar r}$, $\bar r\in\ZZ_n$). Assume that $\cL\subo$ is semisimple and each $\cL_{\bar r}$, $\bar r\ne \bar 0$, is an irreducible module for $\cL\subo$. Then $\Gamma$ is determined, up to equivalence, by its restriction $\Gamma\subo$ to $\cL\subo$.

In particular, if $\Gamma$ is fine, the maximal quasitorus $\Diag(\Gamma)$ is determined by $\Gamma\subo$.
\end{lemma}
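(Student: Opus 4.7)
The plan is to show that two $A$-gradings $\Gamma$ and $\Gamma'$ on $\cL$ satisfying the hypotheses of the lemma and sharing the same restriction $\Gamma_{\bar 0}=\Gamma'_{\bar 0}$ are equivalent, with the equivalence being realised already by the identity map $\id_\cL$. The key input is the uniqueness-up-to-a-shift of compatible $A$-gradings on an irreducible module over an $A$-graded semisimple Lie algebra, recorded in the Brauer invariant discussion just before the lemma (and ultimately a consequence of Schur's Lemma applied to $\End_{\cL_{\bar 0}}(\cL_{\bar r})=\FF$).

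First I would apply this principle with the $A$-graded algebra $(\cL_{\bar 0},\Gamma_{\bar 0})$ and the irreducible $\cL_{\bar 0}$-module $\cL_{\bar r}$ for each $\bar r\neq\bar 0$. Both $\Gamma$ and $\Gamma'$ induce $A$-gradings on $\cL_{\bar r}$ compatible with the $\cL_{\bar 0}$-action, so there exists $a_{\bar r}\in A$ with $(\cL_{\bar r})^{\Gamma}_a=(\cL_{\bar r})^{\Gamma'}_{a\cdot a_{\bar r}}$ for all $a\in A$. In particular, $\Gamma$ and $\Gamma'$ have exactly the same underlying vector space decomposition of $\cL$; only the labels attached to the components inside $\cL_{\bar r}$ (for $\bar r\neq\bar 0$) may differ. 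Setting $a_{\bar 0}=e$ and defining $\alpha\colon\supp\Gamma\to\supp\Gamma'$ by $\alpha(a)=a\cdot a_{\bar r}$, where $\bar r$ is the unique index with $\cL^\Gamma_a\subseteq\cL_{\bar r}$ (uniqueness from the refinement hypothesis), one checks that $\alpha$ is a bijection and that $\id_\cL(\cL^\Gamma_a)=\cL^{\Gamma'}_{\alpha(a)}$, so that $(\id_\cL,\alpha)$ is the desired equivalence.

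The ``in particular'' statement then follows immediately: $\Diag(\Gamma)$ is defined as the subgroup of $\Aut(\cL)$ acting by a scalar on each homogeneous subspace of $\Gamma$, and by the above, this family of subspaces coincides with that of $\Gamma'$, so $\Diag(\Gamma)=\Diag(\Gamma')$ literally. The only genuinely non-routine ingredient is the invocation of the uniqueness-up-to-a-shift result; everything else is bookkeeping with the refinement hypothesis. I expect no further obstacle, and in particular no analysis of how the shifts $a_{\bar r}$ interact across different $\bar r$ via the Lie bracket is needed, because the identity map realises the equivalence regardless of the specific shifts obtained.
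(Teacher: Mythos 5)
Your proof is correct and takes essentially the same approach as the paper: both invoke the uniqueness-up-to-a-shift of $A$-gradings on the irreducible $\cL\subo$-modules $\cL_{\bar r}$ compatible with $\Gamma\subo$, concluding that the underlying vector space decomposition of $\Gamma$ is entirely determined by $\Gamma\subo$. You spell out the relabeling bijection and note that the identity map realises the equivalence, but the substance matches the paper's two-sentence proof.
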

\begin{proof}
Since each $\cL_{\bar r}$, $\bar r\ne \bar 0$, is irreducible, the restriction $\Gamma_{\bar r}$ of $\Gamma$ to $\cL_{\bar r}$ is the unique $A$-grading, up to a shift, on $\cL_{\bar r}$ compatible with the $A$-grading $\Gamma\subo$ on $\cL\subo$. Hence the homogeneous components of $\Gamma$ are all uniquely determined by $\Gamma\subo$.
\end{proof}

\bigskip


\section{Finite order automorphisms}\label{se:finite_order}

Let $\frg$ be a finite dimensional simple Lie algebra and let $G$ be its group of automorphisms: $G=\Aut(\frg)$. Given a subgroup $H$ of $G$, $C_G(H)$ will denote its centralizer in $G$.

The aim of this section is to show that the maximal finite quasitorus of $\Aut(\fre_8)$ are  either $p$-elementary abelian for $p=2$, $3$ or $5$, or their exponent is either $6$ or $4$ and they contain  specific automorphisms of $\fre_8$ of order $6$ or $4$.

\begin{lemma}\label{le:selfcentralizing}
Let $Q$ be a maximal quasitorus in $G$, then $Q$ is self-centralizing: $C_G(Q)=Q$.
\end{lemma}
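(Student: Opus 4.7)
The inclusion $Q\subseteq C_G(Q)$ is immediate from abelianness, so the content is the reverse inclusion. My plan is to first identify the identity component $C_G(Q)^\circ$ as the torus part $Q^\circ$ of $Q$, and then use a Jordan decomposition argument plus maximality to place an arbitrary element of $C_G(Q)$ into $Q$.

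For the first step, I would use that $G=\Aut(\frg)$ is reductive (its identity component $\mathrm{Int}(\frg)$ is semisimple) and that $Q$, being a quasitorus, is a closed subgroup of multiplicative type. By the standard structure theorem, $H:=C_G(Q)^\circ$ is then a connected reductive group, and the torus $T:=Q^\circ$ lies in $H$ (it is connected and contained in $C_G(Q)$). Any maximal torus $S$ of $H$ lies in $C_G(Q)$, so $S\cdot Q$ is an abelian group consisting of semisimple elements, whose Zariski closure is a quasitorus containing $Q$. Maximality of $Q$ forces $S\subseteq Q$, hence $S\subseteq T$; since $T$ itself extends to some maximal torus of $H$, we get $S=T$. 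Thus $T$ is at once a maximal torus of $H$ and central in $H$ (every element of $H\subseteq C_G(Q)$ commutes with $T\subseteq Q$). A connected reductive group with central maximal torus coincides with that torus, so $H=T$.

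For the second step, let $g\in C_G(Q)$ and consider its Jordan decomposition $g=g_sg_u$ in the algebraic group $G$. Uniqueness together with the fact that each $q\in Q$ conjugates $g$ to itself forces $qg_sq^{-1}=g_s$ and $qg_uq^{-1}=g_u$, so both $g_s$ and $g_u$ lie in $C_G(Q)$. In characteristic zero any unipotent element is contained in a connected one-parameter subgroup $\cong \mathbb{G}_a$, hence in the identity component; therefore $g_u\in H=T$, and a torus has no nontrivial unipotent elements, so $g_u=1$ and $g=g_s$ is semisimple. Finally, $\langle Q,g\rangle$ is then an abelian subgroup of semisimple elements, whose Zariski closure is a quasitorus containing $Q$; maximality gives $g\in Q$, and so $C_G(Q)=Q$.

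The only non-routine ingredient is the reductivity of the centralizer of a diagonalizable subgroup in a reductive group, which I would cite from the standard theory; everything else is bookkeeping with Jordan decomposition and the defining property ``maximal quasitorus.''
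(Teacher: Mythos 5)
Your proof is correct and uses the same three pillars as the paper's argument — Jordan decomposition, maximality of $Q$, and reductivity of the centralizer of $Q$ — but the intermediate steps are organized differently. The paper first shows that $C_G(Q)/Q$ is unipotent (by splitting $x\in C_G(Q)$ into $x_sx_n$ and forcing $x_s\in Q$), deduces that $C_G(Q)$ is nilpotent because $Q$ is central in it, and then invokes the reductivity of $C_G(Q)$ to conclude that a connected reductive nilpotent group is a torus; maximality identifies that torus with $Q^\circ$, and a finite unipotent quotient must be trivial in characteristic zero. You instead attack the identity component directly, showing via maximality that every maximal torus of $C_G(Q)^\circ$ is forced inside $Q^\circ$ and is therefore central, so $C_G(Q)^\circ=Q^\circ$; only then do you bring in Jordan decomposition, at the level of an individual element $g$, to kill its unipotent part and absorb $g$ into $Q$. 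Your route avoids the "nilpotent plus reductive implies torus'' step at the cost of a slightly longer maximal-torus argument, and it is equally valid. One place to be a bit more careful: when you argue that $g_u$ lies in the identity component because it sits on a one-parameter subgroup $\cong\mathbb{G}_a$, you should note that this $\mathbb{G}_a$ actually lies in $C_G(Q)$, not merely in $G$ — the relation $qg_uq^{-1}=g_u$ for $q\in Q$ propagates to $q\exp(t\log g_u)q^{-1}=\exp(t\log g_u)$ — for otherwise connectedness only places $g_u$ in $G^\circ$ rather than in $H=C_G(Q)^\circ$.
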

\begin{proof}
By maximality $Q$ is a closed subgroup of the algebraic group $G$. For any $x\in C_G(Q)$, let $x=x_sx_n$ be its Jordan decomposition \cite[\S 15]{Hum}, then the closure of the subgroup generated by $Q$ and $x_s$ is diagonalizable, so by maximality of $Q$, $x_s\in Q$, and thus the quotient $C_G(Q)/Q$ is unipotent, and hence nilpotent \cite[\S 17.5]{Hum}. We conclude that $C_G(Q)$ is nilpotent, because $Q$ is central in $C_G(Q)$. Then \cite[III.3.4, Proposition 3.6]{GOV} implies that since $Q$ is reductive, so is $C_G(Q)$ and, therefore, $C_G(Q)$ is reductive and nilpotent, and hence its connected component satisfies $C_G(Q)^{\circ}=Z\bigl(C_g(Q)^{\circ})$, and it consists of semisimple elements. By maximality $C_G(Q)^{\circ}=Q^{\circ}$, so $[C_G(Q):Q]\leq [C_G(Q):C_G(Q)^{\circ}] <\infty$. Therefore $C_G(Q)/Q$ is unipotent and finite, so it is trivial (recall that we are assuming $\charac\FF=0$).
\end{proof}

In case $\frg=\fre_8$, the group $G$ is connected and simply connected, so the next result applies.

\begin{lemma}\label{le:gtheta_semisimple}
Assume that $G$ is semisimple, connected and simply connected, and let $Q$ be a maximal quasitorus of $G$ with $Q$ finite. Then for any $\theta\in Q$, the subalgebra of fixed elements $\frg^\theta\,\bigl(\bydef \{x\in\frg: \theta(x)=x\}\bigr)$ is a semisimple subalgebra.
\end{lemma}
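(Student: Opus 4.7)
The plan is to use the standard structure of centralizers of semisimple elements in simply connected groups, together with the maximality of $Q$ as a quasitorus, to rule out a nontrivial central torus in $C_G(\theta)^{\circ}$.

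First I would observe that $\theta$, as an element of a quasitorus, is semisimple. Since $G$ is connected, semisimple and simply connected, Steinberg's theorem on centralizers of semisimple elements applies: $C_G(\theta)$ is connected. It is also reductive, as centralizers of semisimple elements in reductive groups are reductive. Consequently the Lie algebra $\frg^{\theta}=\Lie\bigl(C_G(\theta)\bigr)$ is a reductive subalgebra of $\frg$.

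Next I would argue that the connected center $T\bydef Z\bigl(C_G(\theta)\bigr)^{\circ}$ must be trivial. Since $Q$ is abelian and contains $\theta$, we have $Q\subseteq C_G(\theta)$; hence $Q$ commutes with $T$, which is central in $C_G(\theta)$. The product $TQ$ is therefore an abelian subgroup of $G$. As $T$ is a torus and $Q$ is diagonalizable with every element semisimple, every element of $TQ$ is a product of two commuting semisimple elements, hence itself semisimple. Thus $TQ$ is a commutative group of semisimple elements, that is, a quasitorus of $G$ containing $Q$. If $T$ were nontrivial, then $TQ$ would strictly contain the finite group $Q$, contradicting the maximality of $Q$ as a quasitorus. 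Hence $T$ is trivial, so the reductive algebra $\frg^{\theta}$ has trivial center and is therefore semisimple.

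The only step that requires care is ensuring that $TQ$ is genuinely a quasitorus (i.e.\ diagonalizable, not merely an abelian group of semisimple elements), but once we know it is commutative and consists of semisimple elements, diagonalizability follows from the standard equivalence for commutative linear algebraic groups. Nothing else in the argument is delicate: the key inputs are Steinberg's connectedness theorem (valid in characteristic zero, simply connected, semisimple) and the maximality of $Q$.
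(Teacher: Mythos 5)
Your argument is correct and follows essentially the same route as the paper: use the semisimplicity of $\theta$ and the simply connected hypothesis to conclude (via Steinberg) that $C_G(\theta)$ is connected and reductive, then use maximality of the finite quasitorus $Q$ to force the connected center $Z\bigl(C_G(\theta)\bigr)^{\circ}$ to be trivial, so that $\frg^{\theta}=\Lie\bigl(C_G(\theta)\bigr)$ is a reductive Lie algebra with trivial center, hence semisimple. The only cosmetic difference is that you spell out explicitly why $Z\bigl(C_G(\theta)\bigr)^{\circ}\subseteq Q$, namely by observing that the product $TQ$ is itself a quasitorus; the paper states this containment as a standard fact without the one-line justification.
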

\begin{proof}
Since $\theta$ is semisimple (finite order) and $G$ is connected and simply connected, $C_G(\theta)$ is reductive \cite[Theorem 2.2]{HumCC}, and since $G$ is simply connected, $C_G(\theta)$ is connected \cite[Theorem 2.11]{HumCC}. Then \cite[Lemma 19.5]{HumCC}, $Z\bigl(C_G(\theta)\bigr)^{\circ}$ is a torus.

But $Z\bigl(C_G(\theta)\bigr)^{\circ}$ is contained in any maximal quasitorus of $C_G(\theta)$, so it is contained in $Q$. Since $Q$ is finite, we get $Z\bigl(C_G(\theta)\bigr)^{\circ}=1$, so $\dim Z\bigl(C_G(\theta)\bigr)=0$ and hence the Lie algebra $\cL\bigl(Z\bigl(C_G(\theta)\bigr)\bigr)$ is trivial. It follows that the reductive Lie algebra $\frg^\theta=\cL\bigl(C_G(\theta)\bigr)$ has trivial center, so it is semisimple.
\end{proof}

The finite order automorphisms of $\frg$ are classified, up to conjugation, in \cite[\S 8.6]{Kac} in terms of affine Dynkin diagrams and sequences of relatively prime nonnegative integers $(s_0,\ldots,s_l)$ (here $l+1$ is the number of nodes in the affine Dynkin diagram). For $\frg=\fre_8$, if $\theta$ is a nontrivial finite order automorphisms and $\frg^\theta$ is semisimple, \cite[Proposition 8.6]{Kac} shows that the sequence is of the form $(0,\ldots,1,\ldots,0)$ with only one $s_i=1$ and $i>0$. That is, only one node is involved in the affine Dynkin diagram $E_8^{(1)}$:
\[
\begin{picture}(80,20)
	\put(0,12){\circle{2}}
	\put(10,12){\circle{2}}
	\put(20,12){\circle{2}}
    \put(30,12){\circle{2}}
    \put(40,12){\circle{2}}
    \put(50,12){\circle{2}}
    \put(60,12){\circle{2}}
    \put(70,12){\circle{2}}
    \put(50,2){\circle{2}}
	
    \put(1,12){\line(1,0){8}}
    \put(11,12){\line(1,0){8}}
    \put(21,12){\line(1,0){8}}
    \put(31,12){\line(1,0){8}}
    \put(41,12){\line(1,0){8}}
    \put(51,12){\line(1,0){8}}
    \put(61,12){\line(1,0){8}}
    \put(50,3){\line(0,1){8}}
	
	\put(-1,14){\small 1}
	\put(9,14){\small 2}
	\put(19,14){\small 3}
    \put(29,14){\small 4}
    \put(39,14){\small 5}
    \put(49,14){\small 6}
    \put(59,14){\small 4}
    \put(69,14){\small 2}
    \put(52,1){\small 3}
	
\end{picture}
\]
This unique node will be highlighted putting it in black. Thus, for instance, the diagram
\[
\begin{picture}(80,20)
	\put(0,12){\circle{2}}
	\put(10,12){\circle{2}}
	\put(20,12){\circle{2}}
    \put(30,12){\circle{2}}
    \put(40,12){\circle{2}}
    \put(50,12){\circle{2}}
    \put(60,12){\circle*{2}}
    \put(70,12){\circle{2}}
    \put(50,2){\circle{2}}
	
    \put(1,12){\line(1,0){8}}
    \put(11,12){\line(1,0){8}}
    \put(21,12){\line(1,0){8}}
    \put(31,12){\line(1,0){8}}
    \put(41,12){\line(1,0){8}}
    \put(51,12){\line(1,0){8}}
    \put(61,12){\line(1,0){8}}
    \put(50,3){\line(0,1){8}}
	
	\put(-1,14){\small 1}
	\put(9,14){\small 2}
	\put(19,14){\small 3}
    \put(29,14){\small 4}
    \put(39,14){\small 5}
    \put(49,14){\small 6}
    \put(59,14){\small 4}
    \put(69,14){\small 2}
    \put(52,1){\small 3}
	
\end{picture}
\]
represents an automorphism of order $4$.

Therefore, for $\frg=\fre_8$, any automorphism $\theta$ with $\frg^\theta$ semisimple has order at most $6$, and up to conjugation there are only
\begin{itemize}
\item two order $2$ automorphisms,
\item two order $3$ automorphisms,
\item two order $4$ automorphisms,
\item one order $5$ automorphism, and
\item one order $6$ automorphism,
\end{itemize}
with semisimple $\frg^\theta$.

\begin{proposition}\label{pr:important}
Let $Q$ be a maximal finite abelian subgroup of $\Aut(\fre_8)$. Then either $Q$ is $p$-elementary abelian with $p=2,3$ or $5$, or the exponent of $Q$ is either $4$ or $6$. Moreover, for any $\theta\in Q$, the fixed subalgebra $\fre_8^\theta$ is semisimple.
\end{proposition}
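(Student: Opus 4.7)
The plan is to combine Lemma \ref{le:gtheta_semisimple} with the Kac classification of finite-order automorphisms of $\fre_8$ recalled just above the Proposition, plus a trivial observation about the exponent of a finite abelian group.

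First, since $G=\Aut(\fre_8)=E_8$ is connected, simply connected, and simple, the hypotheses of Lemma \ref{le:gtheta_semisimple} hold for the maximal finite quasitorus $Q$. Applying the lemma to an arbitrary $\theta\in Q$ immediately yields the \emph{moreover} clause: $\fre_8^\theta$ is semisimple for every $\theta\in Q$. This feeds back into Kac's classification: by the preceding discussion, any nontrivial finite-order $\theta$ with $\fre_8^\theta$ semisimple is conjugate to an automorphism whose Kac parameters have $s_i=1$ for a single index $i>0$ and $s_j=0$ otherwise, and then the order of $\theta$ equals the Kac label $a_i$ of the selected node. Since the Kac labels of $E_8^{(1)}$ at the nodes with $i>0$ form the list $2,3,4,5,6,4,2,3$, the order of every element of $Q$ lies in $\{1,2,3,4,5,6\}$.

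To finish, I invoke the elementary fact that in a finite abelian group the exponent equals the maximum order of an element, whence $\exp(Q)\leq 6$. Maximality of $Q$ excludes $\exp(Q)=1$ (any involution of $E_8$ generates a finite abelian subgroup properly containing the trivial one). If $\exp(Q)=p$ for a prime $p\in\{2,3,5\}$, then every element of $Q$ has order dividing $p$ and hence $Q$ is $p$-elementary abelian; otherwise $\exp(Q)\in\{4,6\}$, which is the second alternative of the Proposition.

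The entire argument is essentially bookkeeping and I do not anticipate a serious obstacle: the structural input is confined to Lemma \ref{le:gtheta_semisimple} and the fact that the largest Kac label of $E_8^{(1)}$ is $6$, both of which are already available. The one subtlety to double-check is the identification of a maximal finite abelian subgroup of $\Aut(\fre_8)$ with a maximal quasitorus that happens to be finite — which holds because finite-order elements in characteristic zero are semisimple, so any such abelian subgroup is a quasitorus, and Lemma \ref{le:selfcentralizing} then ensures maximality matches on both sides.
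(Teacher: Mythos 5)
Your proof is correct and follows essentially the same route as the paper's: invoke Lemma~\ref{le:gtheta_semisimple} (valid because $E_8$ is connected and simply connected) to get semisimplicity of each $\fre_8^\theta$, then use Kac's classification to bound element orders by $6$, and conclude by the structure of finite abelian groups. You've spelled out the bookkeeping that the paper compresses into ``The result follows at once,'' and your closing remark — that in characteristic zero a maximal finite abelian subgroup is automatically a maximal quasitorus (finite order implies semisimple, so the group is diagonalizable; strict containment in a quasitorus with positive-dimensional torus part would yield a strictly larger finite abelian subgroup) — is exactly the identification the paper uses implicitly and is worth making explicit.
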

\begin{proof}
By Lemma \ref{le:gtheta_semisimple}, $\fre_8^\theta$ is semisimple for any $\theta\in Q$, so the order of $\theta$ is at most $6$. The result follows at once.
\end{proof}

The maximal elementary abelian subgroups of algebraic groups have been classified in \cite{Griess}. For $\Aut(\fre_8)$ there are only four such subgroups which coincide with its centralizer (and hence they are maximal abelian subgroups). They are isomorphic to $\ZZ_2^9$, $\ZZ_2^8$, $\ZZ_3^5$ and $\ZZ_5^3$.

Hence we must consider the situation in which the maximal finite abelian subgroup $Q$ of $\Aut(\fre_8)$ contains an automorphism of order $6$ or $4$. (It cannot contain both as there are no elements of order $12$ in $Q$.)

\bigskip


\section{Order $6$ automorphism}

We assume in this section that $Q$ is a maximal finite abelian subgroup of $G=\Aut(\frg)$, with $\frg=\fre_8$, and that $Q$ contains an automorphism $\theta$ of order $6$. As $\frg^\theta$ is semisimple (Lemma \ref{le:gtheta_semisimple}), $\theta$ is conjugate to the automorphism corresponding to the diagram
\[
\begin{picture}(80,20)
	\put(0,12){\circle{2}}
	\put(10,12){\circle{2}}
	\put(20,12){\circle{2}}
    \put(30,12){\circle{2}}
    \put(40,12){\circle{2}}
    \put(50,12){\circle*{2}}
    \put(60,12){\circle{2}}
    \put(70,12){\circle{2}}
    \put(50,2){\circle{2}}
	
    \put(1,12){\line(1,0){8}}
    \put(11,12){\line(1,0){8}}
    \put(21,12){\line(1,0){8}}
    \put(31,12){\line(1,0){8}}
    \put(41,12){\line(1,0){8}}
    \put(51,12){\line(1,0){8}}
    \put(61,12){\line(1,0){8}}
    \put(50,3){\line(0,1){8}}
	
	\put(-1,14){\small 1}
	\put(9,14){\small 2}
	\put(19,14){\small 3}
    \put(29,14){\small 4}
    \put(39,14){\small 5}
    \put(49,14){\small 6}
    \put(59,14){\small 4}
    \put(69,14){\small 2}
    \put(52,1){\small 3}
	
\end{picture}
\]
The automorphism $\theta$ thus induces a grading by $\ZZ_6$: $\frg=\bigoplus_{\bar r\in\ZZ_6}\frg_{\bar r}$, with $\frg_{\bar r}=\{x\in\frg: \theta(x)=\xi^r x\}$, where $\xi$ is a primitive $6^\text{th}$ root of $1$. Then, up to isomorphism, we have
\[
\begin{split}
\frg\subo&=\frsl(U)\oplus\frsl(V)\oplus\frsl(W),\\
\frg_{\bar 1}&=U\otimes V\otimes W,\\
\frg_{\bar 2}&=1\otimes V^*\otimes \wedge^2 W,\\
\frg_{\bar 3}&=U\otimes 1\otimes\wedge^3 W,\\
\frg_{\bar 4}&=1\otimes V\otimes \wedge^4 W,\\
\frg_{\bar 5}&=U\otimes V^*\otimes\wedge^5 W,
\end{split}
\]
where $U$, $V$ and $W$ are vector spaces of dimension $2$, $3$ and $6$ respectively. The expression above for $\frg_{\bar r}$, $\bar r\ne \bar 0$, gives the structure of $\frg_{\bar r}$ as a module for $\frg\subo$. For $\frg\subo$, $\frg_{\bar 1}$ and $\frg_{\bar 5}$ this follows from \cite[Proposition 8.6]{Kac}, as well as the fact that, for $\bar r\ne \bar 0$, $\frg_{\bar r}$ is an irreducible module for $\frg\subo$. The other components are computed easily.

In this case, the connected component $\Aut(\frg\subo)^{\circ}$ is isomorphic to $\PSL(U)\times\PSL(V)\times\PSL(W)$. Moreover, for any $(a,b,c)\in\SL(U)\times\SL(V)\times\SL(W)$, there is an automorphism $\phi_{a,b,c}$ of $\frg$ with
\[
\phi_{a,b,c}\vert_{\frg\subo}=\bigl(\Ad_a,\Ad_b,\Ad_c),\quad
\phi_{a,b,c}\vert_{\frg\subuno}=a\otimes b\otimes c.
\]
Note that $\frg\subuno$ generates $\frg$ as an algebra, so the automorphism $\phi_{a,b,c}$ is determined by its action on $\frg\subuno$.

We then have the following homomorphisms (of algebraic groups):
\[
\begin{split}
\Phi: \SL(U)\times\SL(V)\times\SL(W)&\longrightarrow C_G(\theta),\\
 (a,b,c)\qquad&\mapsto\quad \phi_{a,b,c},\\[10pt]
\Psi: C_G(\theta)&\longrightarrow \Aut(\frg\subo),\\
\varphi\quad&\mapsto\quad \varphi\vert_{\frg\subo}.
\end{split}
\]
Note that $Q$ is contained in $C_G(\theta)$ and that $\theta=\phi_{1_U,1_V,\xi 1_W}$.

The kernel and image of both $\Phi$ and $\Psi$ are computed next.

\begin{lemma}\label{le:6}
\begin{romanenumerate}
\item $\im\Psi=\Aut(\frg\subo)^{\circ}\,\bigl(\simeq\PSL(U)\times\PSL(V)\times\PSL(W)\bigr)$ and $\ker\Psi=\langle \theta\rangle$ (the subgroup generated by $\theta$).
\item $\Phi$ is surjective and $\ker\Phi=\langle (-1_U,\xi^2 1_V,\xi 1_W)\rangle$, which is a cyclic group of order $6$.
\end{romanenumerate}
\end{lemma}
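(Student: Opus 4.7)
My plan is to treat (i) first and deduce (ii) from it. For part (i), the inclusion $\im\Psi\supseteq\Aut(\frg\subo)^{\circ}$ is immediate from the construction: each $\phi_{a,b,c}$ satisfies $\Psi(\phi_{a,b,c})=(\Ad_a,\Ad_b,\Ad_c)$, and as $(a,b,c)$ varies over $\SL(U)\times\SL(V)\times\SL(W)$ these triples exhaust $\PSL(U)\times\PSL(V)\times\PSL(W)\simeq\Aut(\frg\subo)^{\circ}$. For the reverse inclusion I would appeal to connectedness. The group $G=\Aut(\fre_8)$ is simply connected (as $E_8$ is both adjoint and simply connected), and $\theta$ is semisimple since it has finite order, so by \cite[Theorem 2.11]{HumCC} (already invoked in the proof of Lemma~\ref{le:gtheta_semisimple}) the centralizer $C_G(\theta)$ is connected. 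Hence its image under the morphism $\Psi$ of algebraic groups is connected and thus lies in $\Aut(\frg\subo)^{\circ}$.

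To compute $\ker\Psi$ I would argue as follows. One inclusion is obvious: $\theta$ acts trivially on $\frg\subo=\frg^{\theta}$. For the other, take $\varphi\in\ker\Psi$; since $\varphi$ commutes with the $\frg\subo$-action and each $\frg_{\bar r}$ ($\bar r\ne\bar 0$) is irreducible as a $\frg\subo$-module, Schur's lemma forces $\varphi|_{\frg_{\bar r}}=\lambda_r\id$ for some $\lambda_r\in\FF^\times$. Compatibility with the nonzero brackets $[\frg_{\bar r},\frg_{\bar s}]\subseteq\frg_{\overline{r+s}}$ (the non-vanishing follows from $\frg_{\bar 1}$ generating $\frg$ together with $\frg\subo$) then forces $\lambda_{r+s}=\lambda_r\lambda_s$, so $r\mapsto\lambda_r$ is a character of $\ZZ_6$ and $\varphi=\theta^k$ for the unique $k$ with $\xi^k=\lambda_1$.

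For (ii), I would begin with the identification $\theta=\phi_{1_U,1_V,\xi 1_W}$: both automorphisms act as $\xi\cdot\id$ on $\frg_{\bar 1}$ and trivially on $\frg\subo$, and these subspaces generate $\frg$. Since $\Phi$ is visibly a group homomorphism, this gives $\theta^k=\phi_{1_U,1_V,\xi^k 1_W}$. Now given $\varphi\in C_G(\theta)$, part (i) provides $(a,b,c)$ with $\Psi(\varphi)=\Psi(\phi_{a,b,c})$, so $\varphi\phi_{a,b,c}^{-1}=\theta^k$ for some $k$, whence $\varphi=\phi_{a,b,\xi^k c}\in\im\Phi$. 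For the kernel, $\phi_{a,b,c}=\id$ demands that $a$, $b$, $c$ be central in the respective special linear groups, so $a=(-1)^i 1_U$, $b=\xi^{2j}1_V$, $c=\xi^l 1_W$ with $i\in\{0,1\}$, $j\in\{0,1,2\}$, $l\in\{0,\ldots,5\}$; the further requirement that $a\otimes b\otimes c$ act as the identity on $\frg_{\bar 1}$ becomes $\xi^{3i+2j+l}=1$. A direct enumeration yields exactly six solutions, which coincide with the powers of $(-1_U,\xi^2 1_V,\xi 1_W)$.

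The one step that is not routine is the containment $\im\Psi\subseteq\Aut(\frg\subo)^{\circ}$; connectedness of $C_G(\theta)$ makes it painless, but an alternative hands-on argument would exclude outer automorphisms by observing that the $\frsl(V)$-module $V$ and the $\frsl(W)$-module $W$ are not self-dual, so no nontrivial diagram automorphism of $\frsl(V)$ or $\frsl(W)$ can preserve the $\frg\subo$-module structure of $\frg_{\bar 1}=U\otimes V\otimes W$. Everything else reduces to Schur's lemma together with a short congruence count.
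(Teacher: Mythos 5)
Your proposal is correct and follows essentially the same route as the paper: Schur's lemma to compute $\ker\Psi=\langle\theta\rangle$ via the scalar action on the nontrivial homogeneous components, connectedness of $C_G(\theta)$ (from \cite[Theorem 2.11]{HumCC}, as used in Lemma~\ref{le:gtheta_semisimple}) to pin $\im\Psi$ inside $\Aut(\frg\subo)^{\circ}$, surjectivity of $\Phi$ by lifting through $\ker\Psi\subseteq\im\Phi$, and a direct identification of $\ker\Phi$ with the scalar triples of determinant one. The only differences are cosmetic: you spell out the character argument across all $\frg_{\bar r}$ where the paper argues just from $\varphi|_{\frg\subuno}=\lambda\,\id$ and $\lambda^6=1$, and you enumerate the six kernel elements explicitly rather than just exhibiting the generator.
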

\begin{proof}
If $\varphi$ is in $\ker\Psi$, then $\varphi\vert_{\frg\subo}=\id$, so by Schur's Lemma, $\varphi\vert_{\frg\subuno}=\lambda\id$ for a nonzero scalar $\lambda$. But $\frg\subuno$ generates $\frg$ and this forces $\lambda^6=1$, hence $\varphi$ is a power of $\theta$.

Since $C_G(\theta)$ is connected (proof of Lemma \ref{le:gtheta_semisimple}), $\im\Psi$ is contained in $\Aut(\frg\subo)^{\circ}$, so for any $\varphi\in C_G(\theta)$ there are elements $a\in\SL(U)$, $b\in\SL(V)$ and $c\in\SL(W)$ such that $\varphi\vert_{\frg\subo}=\bigl(\Ad_a,\Ad_b,\Ad_c\bigr)$. But $\phi_{a,b,c}\in C_G(\theta)$, and $\Psi(\varphi)=\Psi(\phi_{a,b,c})$. Hence $\varphi\phi_{a,b,c}^{-1}\in\ker\Psi=\langle\theta\rangle\subseteq \im\Phi$. Thus, $\varphi\in \im\Phi$ and $\Phi$ is onto.

But $\Psi(\im\Phi)$ fills $\Aut(\frg\subo)^{\circ}\simeq \PSL(U)\times\PSL(V)\times\PSL(W)$, so we obtain that $\im\Psi=\Aut(\frg\subo)^{\circ}$.

Finally, for any $a\in\SL(U)$, $b\in\SL(V)$ and $c\in\SL(W)$, the automorphism $\phi_{a,b,c}$ is the identity if and only if $a\otimes b\otimes c=\id$ in $\End_\FF(U\otimes V\otimes W)$, and this happens if and only if there are scalars $\lambda,\mu,\nu\in\FF^\times$ with $\lambda\mu\nu=1$ such that $a=\lambda 1_U$, $b=\mu 1_V$ and $c=\nu 1_W$ (which implies $\lambda^2=\mu^3=\nu^6=1$ because the determinant of these endomorphisms is $1$). This shows that $\ker \Phi$ is generated by $(-1_U,\xi^2 1_V,\xi 1_W)=(\xi^3 1_U,\xi^2 1_V,\xi 1_W)$.
\end{proof}

Denote by $\pi_U$, $\pi_V$ and $\pi_W$ the projections of $\Aut(\frg\subo)^{\circ}$ onto $\PSL(U)$, $\PSL(V)$ and $\PSL(W)$ respectively. The quasitorus $Q$ induces a fine grading $\Gamma$ on $\frg$ with trivial neutral homogeneous component, which restricts to a grading $\Gamma\subo$ on $\frg\subo$ and hence on $\frsl(U)$, $\frsl(V)$ and $\frsl(W)$ with trivial neutral homogeneous components. In particular, $\pi_U\circ\Psi(Q)$ is a diagonalizable subgroup in $\PSL(U)\,\bigl(\simeq\Aut(\frsl(U))^{\circ}\bigr)$ whose induced grading $\Gamma_U$ satisfies that its neutral component is trivial. The only possibility (Theorem \ref{th:slV}) is that $\pi_U\circ\Psi(Q)$ be isomorphic to $\ZZ_2^2$. In the same vein, $\pi_V\circ\Psi(Q)\cong \ZZ_3^2$ and $\pi_W\circ\Psi(Q)\cong \ZZ_6^2$.

This shows, in particular, that there are elements $c_1,c_2\in\SL(W)$ with $c_1^6=c_2^6=-1$ and $c_1c_2=\xi c_2c_1$ such that $\pi_W\circ\Psi(Q)=\langle [c_1],[c_2]\rangle$. Hence there are elements $a_1,a_2\in\SL(U)$ and $b_1,b_2\in\SL(V)$, such that $\phi_{a_1,b_1,c_1}$ and $\phi_{a_2,b_2,c_2}$ are in $Q$. Now we get
\begin{itemize}
\item Since $\pi_U\circ\Psi(Q)\cong \ZZ_2^2$, we have $\Ad_{a_i}^2=\id$, $i=1,2$, so $a_i^2=\epsilon_i 1_U$, with $\epsilon_i=\pm 1$, $i=1,2$ (as $\det(a_i)=1$).
\item Also, $\pi_V\circ\Psi(Q)\cong \ZZ_3^2$, so $\Ad_{b_i}^3=1$ and $b_i^3=\mu_i 1_V$ with $\mu_i^3=1$, $i=1,2$.
\item $\phi_{a_i,b_i,c_i}^6=\id$, so $\id=a_i^6\otimes b_i^6\otimes c_i^6=-a_i^6\otimes b_i^6\otimes 1_W=(-\epsilon_i^3\mu_i^2)\id$. Hence $\epsilon_i\mu_i^2=-1$, and this forces $\epsilon_i=-1$ and $\mu_i=1$, $i=1,2$.
\item Since $Q$ is abelian, $\phi_{a_1,b_1,c_1}\phi_{a_2,b_2,c_2}=\phi_{a_2,b_2,c_2}\phi_{a_1,b_1,c_1}$, and hence $a_1a_2\otimes b_1b_2\otimes c_1c_2=a_2a_1\otimes b_2b_1\otimes c_2c_1$, that is, $\xi a_1a_2\otimes b_1b_2=a_2a_1\otimes b_2b_1$. It then follows that there are scalars $\mu,\nu\in\FF^\times$ such that $a_1a_2=\mu a_2a_1$ and $b_1b_2=\nu b_2b_1$. Besides, $\mu^2=1$ (because $\det(a_1a_2)=1$) and $\nu^3=1$, and $\xi\mu\nu=1$. We conclude that $a_1a_2=-a_2a_1$ and $b_1b_2=\xi^2b_2b_1$ and hence we obtain $\pi_U\circ\Psi(Q)=\langle [a_1],[a_2]\rangle$ and $\pi_V\circ\Psi(Q)=\langle [b_1],[b_2]\rangle$
\end{itemize}

\begin{theorem}\label{th:6}
Under the conditions above, the maximal quasitorus $Q$ is isomorphic to $\ZZ_6^3$. Moreover, it is given explicitly by
\[
Q=\langle \phi_{a_1,b_1,c_1},\phi_{a_2,b_2,c_2},\theta\rangle
\]
for $a_1,a_2\in\SL(U)$ with $a_1^2=a_2^2=-1_U$, $a_1a_2=-a_2a_1$, $b_1,b_2\in\SL(V)$ with $b_1^3=b_2^3=1_V$, $b_1b_2=\xi^2b_2b_1$, and $c_1,c_2\in\SL(W)$ with $c_1^6=c_2^6=-1_W$, $c_1c_2=\xi c_2c_1$. ($\xi$ is a primitive $6^\text{th}$ root of $1$.)
\end{theorem}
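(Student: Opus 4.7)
The plan is to show that every element of $Q$ already lies in the subgroup $Q_0\bydef\langle \phi_{a_1,b_1,c_1},\phi_{a_2,b_2,c_2},\theta\rangle$, and then to check directly that $Q_0\cong\ZZ_6^3$.

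First, take an arbitrary $\varphi\in Q$. By the surjectivity of $\Phi$ (Lemma~\ref{le:6}(ii)), $\varphi=\phi_{a,b,c}$ for some $a\in\SL(U)$, $b\in\SL(V)$, $c\in\SL(W)$. Commutativity with each $\phi_{a_i,b_i,c_i}$, evaluated on $\frg\subuno=U\otimes V\otimes W$, gives $aa_i\otimes bb_i\otimes cc_i=a_ia\otimes b_ib\otimes c_ic$, and hence there exist $\alpha_i,\beta_i,\gamma_i\in\FF^\times$ with $\alpha_i\beta_i\gamma_i=1$ such that $aa_i=\alpha_i a_ia$, $bb_i=\beta_i b_ib$ and $cc_i=\gamma_i c_ic$. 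In particular $a$ is a common eigenvector of $\Ad_{a_1}$ and $\Ad_{a_2}$, and by Remark~\ref{re:division_grading} combined with the $1$-dimensionality of the homogeneous components of the division grading on $\End_\FF(U)$ associated with $\{a_1,a_2\}$, we get $a\in\FF^\times a_1^ka_2^l$. Analogously $b\in\FF^\times b_1^mb_2^n$ and $c\in\FF^\times c_1^pc_2^q$.

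Next, using $a_2a_1=-a_1a_2$, $b_2b_1=\xi^{-2}b_1b_2$ and $c_2c_1=\xi^{-1}c_1c_2$, a direct computation evaluates the scalars:
\[
\alpha_1=\xi^{3l},\ \alpha_2=\xi^{3k},\ \beta_1=\xi^{-2n},\ \beta_2=\xi^{2m},\ \gamma_1=\xi^{-q},\ \gamma_2=\xi^p.
\]
The two relations $\alpha_i\beta_i\gamma_i=1$ then become $3l-2n-q\equiv 0$ and $3k+2m+p\equiv 0$ modulo $6$. Reducing modulo $2$ and modulo $3$ yields $p\equiv k\pmod 2$, $p\equiv m\pmod 3$, $q\equiv l\pmod 2$ and $q\equiv n\pmod 3$. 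By the Chinese remainder theorem, choosing $i\equiv p$ and $j\equiv q$ modulo $6$ automatically ensures $i\equiv k,\ j\equiv l\pmod 2$ and $i\equiv m,\ j\equiv n\pmod 3$, so $a_1^{k-i}a_2^{l-j}$, $b_1^{m-i}b_2^{n-j}$ and $c_1^{p-i}c_2^{q-j}$ are all scalar. Hence $[a]=[a_1^ia_2^j]$, $[b]=[b_1^ib_2^j]$ and $[c]=[c_1^ic_2^j]$, so $\Psi(\varphi)=\Psi(\phi_{a_1,b_1,c_1}^i\phi_{a_2,b_2,c_2}^j)$. Lemma~\ref{le:6}(i) then forces $\varphi\in\phi_{a_1,b_1,c_1}^i\phi_{a_2,b_2,c_2}^j\langle\theta\rangle\subseteq Q_0$, and therefore $Q=Q_0$.

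Finally, to identify $Q_0\cong\ZZ_6^3$, I verify that each generator has order exactly $6$: $\phi_{a_i,b_i,c_i}^6=\phi_{-1_U,1_V,-1_W}=\id$ because $(-1_U,1_V,-1_W)=(-1_U,\xi^2 1_V,\xi 1_W)^3\in\ker\Phi$, while the lower powers are nontrivial because $a_i$ and $b_i$ are not scalar. Any relation $\phi_{a_1,b_1,c_1}^s\phi_{a_2,b_2,c_2}^t\theta^r=\id$ places $(a_1^sa_2^t,b_1^sb_2^t,\xi^r c_1^sc_2^t)\in\ker\Phi$; since $a_1^sa_2^t$ and $b_1^sb_2^t$ must then be scalar, $(s,t)\equiv(0,0)$ both modulo $2$ and modulo $3$, forcing $s=t=0$ in $\ZZ_6$ and then $r=0$. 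The main technical step is the middle paragraph: turning the two constraints $\alpha_i\beta_i\gamma_i=1$ into enough congruences to synchronize the exponents across the three tensor factors. The rest is either given or a direct check.
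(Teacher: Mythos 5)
Your proof is correct and takes essentially the same route as the paper's: write $\varphi=\phi_{a,b,c}$ via the surjectivity of $\Phi$, use commutativity in $Q$ together with the three division gradings to constrain the exponents of $a,b,c$, and then invoke $\ker\Psi=\langle\theta\rangle$. The one organizational difference is that the paper first multiplies $\varphi$ by a suitable $\phi_{a_1,b_1,c_1}^{n_1}\phi_{a_2,b_2,c_2}^{n_2}$ to land in $\ker(\pi_W\circ\Psi)$ and only then shows the residual $a,b$ exponents vanish, whereas you synchronize all three exponent pairs at once by reading the relations $\alpha_i\beta_i\gamma_i=1$ modulo $2$ and modulo $3$; both amount to the same arithmetic observation that the $\pmod 2$ and $\pmod 3$ congruences assemble into a single $\pmod 6$ congruence. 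Your verification that $Q_0\cong\ZZ_6^3$ is also sound, matching the paper's final count.
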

\begin{proof}
If $\varphi\in Q$, then $\pi_W\circ\Psi(\varphi)\in \langle [c_1],[c_2]\rangle$, so there are integers $0\leq n_1,n_2\leq 5$ such that
\[
\varphi\phi_{a_1,b_1,c_1}^{n_1}\phi_{a_2,b_2,c_2}^{n_2}\in\ker\bigl(\pi_W\circ\Psi\bigr).
\]
Hence we may assume that $\varphi$ is in $\ker\bigl(\pi_W\circ\Psi\bigr)$. Since $\Phi$ is onto, there are elements $a\in\SL(U)$ and $b\in\SL(V)$ with $\varphi=\phi_{a,b,1_W}$. (Note that $\theta=\phi_{1_U,1_V,\xi 1_W}=\phi_{\xi^3 1_U,\xi^4 1_V,1_W}$.)

Since $\pi_U\circ\Psi(\varphi)$ lies in $\langle [a_1],[a_2]\rangle$, there is a scalar $\lambda\in\FF^\times$ and integers $0\leq r_1,r_2\leq 2$ with $a=\lambda a_1^{r_1}a_2^{r_2}$ and, similarly, there is a scalar $\mu\in\FF^\times$ and integers $0\leq s_1,s_2\leq 2$ with $b=\mu b_1^{s_1}b_2^{s_2}$. Also $\lambda^2=1=\mu^3$ because the determinants are always $1$.

Now, $Q$ is abelian, so $\varphi\phi_{a_1,b_1,c_1}=\phi_{a_1,b_1,c_1}\varphi$ or, equivalently,
\[
(a\otimes b\otimes 1_W)(a_1\otimes b_1\otimes c_1)
=(a_1\otimes b_1\otimes c_1)(a\otimes b\otimes 1_W),
\]
and, since $aa_1=(-1)^{r_2}a_1a$ and $bb_1=\xi^{2s_2}b_1b$, we obtain $(-1)^{r_2}\xi^{2s_2}=1$, which gives $r_2=s_2=0$. In the same vein we get $r_1=s_1=0$. Therefore $\varphi\in\ker\Psi=\langle\theta\rangle$, and we obtain $Q=\langle \phi_{a_1,b_1,c_1},\phi_{a_2,b_2,c_2},\theta\rangle$. Moreover, the three generators have order $6$, $\theta\in\ker\bigl(\pi_W\circ\Psi\bigr)$ and $\im\bigl(\pi_W\circ\Psi\vert_Q\bigr)=\langle [c_1],[c_2]\rangle\cong\ZZ_6^2$, so $Q$ is isomorphic to $\ZZ_6^3$.
\end{proof}

\begin{corollary}\label{co:6}
Up to conjugation, $\Aut(\fre_8)$ contains a unique maximal finite abelian subgroup with elements of order $6$.
\end{corollary}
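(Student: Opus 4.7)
The plan is to reduce uniqueness to the uniqueness of the division gradings on the matrix algebras $M_2(\FF)$, $M_3(\FF)$ and $M_6(\FF)$ used to build $Q$ in Theorem \ref{th:6}.

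First I would fix a single representative for the order--$6$ ingredient. By Proposition \ref{pr:important} any order $6$ element $\theta\in Q$ has $\fre_8^\theta$ semisimple, so Kac's classification shows that $\theta$ is unique up to conjugation in $G=\Aut(\fre_8)$. Hence, given two maximal finite abelian subgroups $Q,Q'$ of $G$ with exponent $6$, after a global conjugation I may assume $\theta\in Q\cap Q'$ and the $\ZZ_6$-grading on $\fre_8$ induced by $\theta$ is the fixed one described at the beginning of Section 4, with $\frg\subo=\frsl(U)\oplus\frsl(V)\oplus\frsl(W)$. Then $Q$ and $Q'$ both sit inside $C_G(\theta)=\im\Phi$, by Lemma \ref{le:6}.

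Next I would use Theorem \ref{th:6} to write
\[
Q=\langle\phi_{a_1,b_1,c_1},\phi_{a_2,b_2,c_2},\theta\rangle,\qquad
Q'=\langle\phi_{a'_1,b'_1,c'_1},\phi_{a'_2,b'_2,c'_2},\theta\rangle,
\]
with the same commutation relations on the factors. Now the restriction $Q\vert_{\frsl(W)}$ is a maximal quasitorus of $\PSL(W)$ giving a division grading on $M_6(\FF)$ by $\ZZ_6^2$; by the uniqueness statement recorded after Remark \ref{re:Xt} (elements $x,y\in\SL(V_i)$ with the prescribed commutation are unique up to simultaneous conjugation in $\SL(V_i)$), there exists $g_W\in\SL(W)$ with $g_Wc_ig_W^{-1}=c'_i$ for $i=1,2$. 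The same argument applied to $(a_1,a_2)\subset\SL(U)$ (which, up to sign, are Pauli-type elements generating a division grading of $M_2(\FF)$ by $\ZZ_2^2$) and to $(b_1,b_2)\subset\SL(V)$ (generators of a division grading of $M_3(\FF)$ by $\ZZ_3^2$) produces $g_U\in\SL(U)$ and $g_V\in\SL(V)$ conjugating $a_i\mapsto a'_i$ and $b_i\mapsto b'_i$.

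Finally I would transport this to $G$: the automorphism $\phi_{g_U,g_V,g_W}\in\im\Phi\subset G$ satisfies $\phi_{g_U,g_V,g_W}\,\phi_{a_i,b_i,c_i}\,\phi_{g_U,g_V,g_W}^{-1}=\phi_{g_Ua_ig_U^{-1},\,g_Vb_ig_V^{-1},\,g_Wc_ig_W^{-1}}=\phi_{a'_i,b'_i,c'_i}$ for $i=1,2$, and it commutes with $\theta$, so it conjugates the three generators of $Q$ onto the three generators of $Q'$. The main obstacle is this last matching step, because the generators $(a_i,b_i,c_i)$ are only determined modulo $\ker\Phi=\langle(-1_U,\xi^2 1_V,\xi 1_W)\rangle$, so the scalar ambiguities produced when conjugating in each of $\SL(U)$, $\SL(V)$, $\SL(W)$ must be absorbed by powers of $\theta$; this is possible precisely because $\theta\in Q\cap Q'$ is a power of $\phi_{-1_U,\xi^2 1_V,\xi 1_W}$, so modifying $(a_i,b_i,c_i)$ by an element of $\ker\Phi$ does not change the generated subgroup. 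This completes the conjugation $Q\to Q'$ and proves the corollary.
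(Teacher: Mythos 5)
Your argument is correct and is exactly the reasoning the paper leaves implicit: the corollary follows from Theorem \ref{th:6} once one notes (i) the unique conjugacy class of an order--$6$ automorphism with semisimple fixed subalgebra, and (ii) the uniqueness, up to simultaneous conjugation in $\SL(U)$, $\SL(V)$, $\SL(W)$, of the normalized pairs $(a_1,a_2)$, $(b_1,b_2)$, $(c_1,c_2)$; conjugating by $\phi_{g_U,g_V,g_W}\in\im\Phi\subseteq C_G(\theta)$ then carries $Q$ onto $Q'$.

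One remark on the last paragraph: the ``obstacle'' you describe there is not actually present, and your proposed resolution of it contains a slip. The element $\phi_{-1_U,\xi^2 1_V,\xi 1_W}$ is the \emph{identity} automorphism, being $\Phi$ applied to the generator of $\ker\Phi$; in particular $\theta$ (which equals $\phi_{1_U,1_V,\xi 1_W}$) is not a power of it. More to the point, once $g_U,g_V,g_W$ are chosen so that $g_U a_i g_U^{-1}=a'_i$, $g_V b_i g_V^{-1}=b'_i$, $g_W c_i g_W^{-1}=c'_i$ hold \emph{exactly} --- which the uniqueness-up-to-conjugation statement in Section~\ref{se:division_grading} provides --- the relation $\phi_{g_U,g_V,g_W}\phi_{a_i,b_i,c_i}\phi_{g_U,g_V,g_W}^{-1}=\phi_{a'_i,b'_i,c'_i}$ is an identity in $C_G(\theta)$, with no residual scalar to absorb. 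Your first three paragraphs already constitute a complete proof; the fourth can be dropped.
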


\bigskip


\section{Order $4$ automorphism. Type I}

Assume now that our maximal finite abelian subgroup $Q$ of $G=\Aut(\frg)$, with $\frg=\fre_8$,  contains an automorphism $\theta$ of order $4$ which is conjugate to the automorphism corresponding to the diagram
\[
\begin{picture}(80,20)
	\put(0,12){\circle{2}}
	\put(10,12){\circle{2}}
	\put(20,12){\circle{2}}
    \put(30,12){\circle{2}}
    \put(40,12){\circle{2}}
    \put(50,12){\circle{2}}
    \put(60,12){\circle*{2}}
    \put(70,12){\circle{2}}
    \put(50,2){\circle{2}}
	
    \put(1,12){\line(1,0){8}}
    \put(11,12){\line(1,0){8}}
    \put(21,12){\line(1,0){8}}
    \put(31,12){\line(1,0){8}}
    \put(41,12){\line(1,0){8}}
    \put(51,12){\line(1,0){8}}
    \put(61,12){\line(1,0){8}}
    \put(50,3){\line(0,1){8}}
	
	\put(-1,14){\small 1}
	\put(9,14){\small 2}
	\put(19,14){\small 3}
    \put(29,14){\small 4}
    \put(39,14){\small 5}
    \put(49,14){\small 6}
    \put(59,14){\small 4}
    \put(69,14){\small 2}
    \put(52,1){\small 3}
	
\end{picture}
\]
These order $4$ automorphisms will be said of \emph{type I}.

The automorphism $\theta$ thus induces a grading by $\ZZ_4$: $\frg=\bigoplus_{\bar r\in\ZZ_4}\frg_{\bar r}$, with $\frg_{\bar r}=\{x\in\frg: \theta(x)=\bi^r x\}$, where $\bi$ is a primitive $4^\text{th}$ root of $1$. Then, up to isomorphism, we have
\[
\begin{split}
\frg\subo&=\frsl(U)\oplus\frsl(V),\\
\frg_{\bar 1}&=U\otimes \wedge^2V,\\
\frg_{\bar 2}&=1\otimes  \wedge^4 V,\\
\frg_{\bar 3}&=U\otimes \wedge^6 V
\end{split}
\]
where $U$ and $V$ are vector spaces of dimension $2$ and $8$ respectively.

As in the previous section, we have group homomorphisms
\[
\begin{split}
\Phi: \SL(U)\times\SL(V)&\longrightarrow C_G(\theta),\\
 (a,b)\qquad&\mapsto\quad \phi_{a,b},\\[10pt]
\Psi: C_G(\theta)&\longrightarrow \Aut(\frg\subo),\\
 \varphi\quad&\mapsto\quad \varphi\vert_{\frg\subo},
\end{split}
\]
where $\phi_{a,b}\vert_{\frg\subuno}=a\otimes\wedge^2b$ and $\phi_{a,b}\vert_{\frg\subo}=(\Ad_a,\Ad_b)$.

Let $\omega$ be a primitive eighth root of $1$ with $\bi=\omega^2$. Then $\theta=\phi_{\bi 1_U,1_V}=\phi_{1_U,\omega 1_V}$. The next result is proved along the same lines as Lemma \ref{le:6}.

\begin{lemma}\label{le:4I}
\begin{romanenumerate}
\item $\im\Psi=\Aut(\frg\subo)^{\circ}\,\bigl(\simeq\PSL(U)\times\PSL(V)\bigr)$ and $\ker\Psi=\langle \theta\rangle$.
\item $\Phi$ is surjective and $\ker\Phi=\langle (-1_U,\omega^2 1_V)\rangle$, which is a cyclic group of order $4$.
\end{romanenumerate}
\end{lemma}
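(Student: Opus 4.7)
The plan is to follow the template of the proof of Lemma \ref{le:6}, adapting it to the present two-factor setting. The computations of kernels and images are essentially determinant/Schur-lemma bookkeeping, together with the already established fact that $C_G(\theta)$ is connected.

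I would first compute $\ker\Psi$. If $\varphi\in C_G(\theta)$ restricts to the identity on $\frg\subo=\frsl(U)\oplus\frsl(V)$, then since each $\frg_{\bar r}$ with $\bar r\ne\bar 0$ is an irreducible $\frg\subo$-module (namely $U\otimes\wedge^2V$, $\wedge^4V$, $U\otimes\wedge^6V$), Schur's lemma gives scalars $\lambda_{\bar r}\in\FF^\times$ with $\varphi\vert_{\frg_{\bar r}}=\lambda_{\bar r}\id$. The bracket relations $[\frg_{\bar 1},\frg_{\bar 1}]\subseteq\frg_{\bar 2}$, $[\frg_{\bar 1},\frg_{\bar 2}]\subseteq\frg_{\bar 3}$ and $[\frg_{\bar 1},\frg_{\bar 3}]\subseteq\frg\subo$ force $\lambda_{\bar 2}=\lambda_{\bar 1}^2$, $\lambda_{\bar 3}=\lambda_{\bar 1}^3$ and $\lambda_{\bar 1}^4=1$. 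Since $\theta=\phi_{\bi 1_U,1_V}$ acts as $\bi^r$ on $\frg_{\bar r}$, this shows $\varphi\in\langle\theta\rangle$, giving $\ker\Psi=\langle\theta\rangle$.

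Next I would pin down $\im\Psi$. Since $C_G(\theta)$ is connected (see the proof of Lemma \ref{le:gtheta_semisimple}), $\im\Psi$ is contained in $\Aut(\frg\subo)^{\circ}\simeq\PSL(U)\times\PSL(V)$. For the reverse inclusion, it suffices to exhibit enough elements: for any $a\in\SL(U)$ and $b\in\SL(V)$ the formula $\phi_{a,b}\vert_{\frg\subo}=(\Ad_a,\Ad_b)$ shows $\Psi(\Phi(a,b))=(\Ad_a,\Ad_b)$, and as $(a,b)$ runs over $\SL(U)\times\SL(V)$ these fill all of $\PSL(U)\times\PSL(V)$. This gives (i).

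Surjectivity of $\Phi$ then falls out: given $\varphi\in C_G(\theta)$, pick $(a,b)\in\SL(U)\times\SL(V)$ with $\Psi(\varphi)=\Psi(\phi_{a,b})$; then $\varphi\phi_{a,b}^{-1}\in\ker\Psi=\langle\theta\rangle\subseteq\im\Phi$, so $\varphi\in\im\Phi$. Finally, for $\ker\Phi$, suppose $\phi_{a,b}=\id$. From $\phi_{a,b}\vert_{\frg\subo}=\id$ and Schur's lemma on the natural representations, $a=\lambda 1_U$ and $b=\mu 1_V$ with $\lambda^2=1$ and $\mu^8=1$ (determinant conditions). The identity $a\otimes\wedge^2b=\id$ on $U\otimes\wedge^2V$ forces $\lambda\mu^2=1$, hence $\mu^2=\lambda\in\{\pm 1\}$, which yields the four solutions $(\pm 1_U,\pm\omega^2 1_V)$ with $\mu^2=\lambda$. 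One checks directly that they form the cyclic group generated by $(-1_U,\omega^2 1_V)$, using $\omega^4=-1$ and $\omega^8=1$, which proves (ii). The only minor subtlety here is keeping track of how $\omega$ (a primitive eighth root) relates to $\bi=\omega^2$ in the determinant constraints; this is bookkeeping rather than a real obstacle.
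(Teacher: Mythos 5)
Your proposal is correct and follows essentially the same template that the paper indicates (namely, the argument of Lemma~\ref{le:6}): Schur's lemma plus the fact that $\frg\subuno$ generates $\frg$ to get $\ker\Psi=\langle\theta\rangle$, connectedness of $C_G(\theta)$ for $\im\Psi$, and a determinant/scalar bookkeeping for $\ker\Phi$. The only blemish is the shorthand ``the four solutions $(\pm 1_U,\pm\omega^2 1_V)$''---the actual set is $\{(1_U,\pm 1_V),(-1_U,\pm\omega^2 1_V)\}$, as your own subsequent computation with $\omega^4=-1$ makes clear---but the argument and conclusion are right.
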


Denote by $\pi_U$ and $\pi_V$ the projections of $\Aut(\frg\subo)^{\circ}$ onto $\PSL(U)$ and $\PSL(V)$ respectively. Then $\pi_U\circ\Psi(Q)$ is necessarily isomorphic to $\ZZ_2^2$, while  $\pi_V\circ\Psi(Q)$, which is a $2$-group of exponent $\leq 4$, is isomorphic either to $\ZZ_4^2\times\ZZ_2^2$, or to $\ZZ_2^6$.

In particular, there are elements $a_1,a_2\in\SL(U)$ such that $a_1^2=-1_U=a_2^2$, $a_1a_2=-a_2a_1$ and $\pi_U\circ\Psi(Q)=\langle [a_1],[a_2]\rangle$, and hence there are elements $b_1,b_2\in\SL(V)$ such that $\phi_{a_1,b_1},\phi_{a_2,b_2}$ are in $Q$.

Since $Q$ is abelian,
\[
\phi_{a_1a_2,b_1b_2}=\phi_{a_1,b_1}\phi_{a_2,b_2}=
\phi_{a_2,b_2}\phi_{a_1,b_1}=\phi_{a_2a_1,b_2b_1}=\phi_{-a_1a_2,b_2b_1},
\]
so that $\wedge^2(b_1b_2)=-\wedge^2(b_2b_1)$ and this forces $b_1b_2=\omega^{\pm 2}b_2b_1$. Also, $\phi_{a_i,b_i}^4=\id$, so $\Ad_{b_i}^4=\id$, $i=1,2$. Since $\Ad_{b_1}(b_2)=\omega^{\pm 2}b_2$, we obtain that the order of $\Ad_{b_i}$ is exactly $4$, $i=1,2$, and hence both $\langle \phi_{a_1,b_1},\phi_{a_2,b_2}\rangle$ and $\pi_V\circ\Psi\bigl(\langle \phi_{a_1,b_1},\phi_{a_2,b_2}\rangle\bigr)$ are isomorphic to $\ZZ_4^2$. Therefore, $\pi_V\circ\Psi(Q)$ is necessarily isomorphic to $\ZZ_4^2\times\ZZ_2^2$. Interchanging the indices if necessary, we may always assume that $b_1b_2=\omega^2b_2b_1$.

\begin{lemma}\label{le:4Ibis}
Under the conditions above, $\ker\bigl(\pi_V\circ\Psi\vert_Q\bigr)=\langle \theta\rangle$.
\end{lemma}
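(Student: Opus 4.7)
First I would establish one inclusion for free: since $\theta=\phi_{1_U,\omega 1_V}$ lies in $\ker\Psi$ by Lemma \ref{le:4I}(i), certainly $\langle\theta\rangle\subseteq\ker(\pi_V\circ\Psi|_Q)$. The content is in the reverse inclusion.

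Now take $\varphi\in Q$ with $\pi_V\circ\Psi(\varphi)=1$, so $\Psi(\varphi)=([a'],1)$ in $\PSL(U)\times\PSL(V)$. Using surjectivity of $\Phi$ (Lemma \ref{le:4I}(ii)), write $\varphi=\phi_{a',b'}$ with $a'\in\SL(U)$, $b'\in\SL(V)$; the condition $\Ad_{b'}=\id$ forces $b'=\lambda 1_V$ with $\lambda^8=1$. Since $\theta=\phi_{1_U,\omega 1_V}$, an appropriate power $\theta^k$ equals $\phi_{1_U,\lambda 1_V}$ modulo $\ker\Phi$, so after multiplying $\varphi$ by a suitable power of $\theta$ I may assume $\varphi=\phi_{a',1_V}$.

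The next step is the main one: to exploit the commutativity of $Q$. For $i=1,2$, the identity $\phi_{a',1_V}\phi_{a_i,b_i}=\phi_{a_i,b_i}\phi_{a',1_V}$ becomes $\phi_{a'a_i,b_i}=\phi_{a_ia',b_i}$, which means $(a'a_ia'^{-1}a_i^{-1},1_V)\in\ker\Phi=\langle(-1_U,\omega^2 1_V)\rangle$. Inspecting the four elements of $\ker\Phi$, only the identity has second component $1_V$, hence $a'a_i=a_ia'$ for $i=1,2$. The element $a'$ therefore centralizes the subalgebra $\alg\langle a_1,a_2\rangle$; but as explained in Section \ref{se:division_grading}, the relations $a_1^2=a_2^2=-1_U$ and $a_1a_2=-a_2a_1$ force this subalgebra to be all of $\End_\FF(U)$, so by Schur $a'=\mu 1_U$ with $\mu=\pm 1$ (from $\det a'=1$). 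Then $\varphi=\phi_{\pm 1_U,1_V}$, and a direct check (or the kernel computation above) shows $\phi_{-1_U,1_V}=\theta^2$, so $\varphi\in\langle\theta\rangle$.

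The main potential obstacle is purely bookkeeping: tracking $\varphi$ modulo $\ker\Phi$ when absorbing the free scalar $\lambda$ into a power of $\theta$, and checking carefully that the kernel-of-$\Phi$ condition forces the stronger equality $a'a_i=a_ia'$ rather than only equality up to a central scalar. Once this bookkeeping is in place, the Schur argument against the irreducible pair $(a_1,a_2)$ closes the proof quickly.
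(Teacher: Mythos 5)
Your proof is correct and follows essentially the same route as the paper's: write $\varphi=\phi_{a,\mu 1_V}$ using surjectivity of $\Phi$, exploit commutativity of $Q$ with $\phi_{a_1,b_1},\phi_{a_2,b_2}$ to force $a$ to commute with $a_1,a_2$, invoke $\alg\langle a_1,a_2\rangle=\End_\FF(U)$ to get $a\in\FF^\times 1_U$, and conclude $\varphi\in\langle\theta\rangle$. The one place you are more careful than the paper is the observation that the commutator $a'a_i a'^{-1}a_i^{-1}$ paired with $1_V$ must land in $\ker\Phi$, and only the identity element of $\ker\Phi=\langle(-1_U,\omega^21_V)\rangle$ has second component $1_V$; the paper leaps directly to $aa_i=a_ia$, so your bookkeeping fills in a small implicit step.
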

\begin{proof}
If $\varphi$ is an element of $Q\cap \ker\bigl(\pi_V\circ\Psi\bigr)$, then there are elements $a\in\SL(U)$ and $\mu\in\FF^\times$ with $\mu^8=1$ (i.e., $\mu 1_V\in\SL(V)$), such that $\varphi=\phi_{a,\mu 1_V}$.

But $\varphi\phi_{a_i,b_i}=\phi_{a_i,b_i}\varphi$ for $i=1,2$, because $Q$ is abelian, so $aa_i=a_ia$ for $i=1,2$ and, since $a_1$ and $a_2$ generate $\End_\FF(U)$, it follows that $a\in\FF^\times 1_U$. Hence $a=\pm 1_U$ and $\varphi\in\ker\Psi=\langle\theta\rangle$.
\end{proof}

\begin{corollary}
$Q$ is isomorphic to $\ZZ_4^3\times \ZZ_2^2$.
\end{corollary}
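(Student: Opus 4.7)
My plan is to exploit the short exact sequence
\[
1 \to \langle \theta \rangle \to Q \xrightarrow{\pi_V\circ\Psi|_Q} \pi_V\circ\Psi(Q) \to 1
\]
provided by Lemma~\ref{le:4Ibis}. Since $|\langle\theta\rangle|=4$ and $\pi_V\circ\Psi(Q)\cong\ZZ_4^2\times\ZZ_2^2$, this immediately gives $|Q|=2^8$. Proposition~\ref{pr:important} further restricts the exponent of $Q$ to $\{4,6\}$, and since $\theta\in Q$ has order $4$ (and $4\nmid 6$), the exponent must be exactly $4$. Hence $Q\cong\ZZ_4^a\times\ZZ_2^b$ with $2a+b=8$.

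Next, I would verify that $\langle\phi_{a_1,b_1},\phi_{a_2,b_2},\theta\rangle$ is an internal direct product isomorphic to $\ZZ_4^3$. The discussion preceding the corollary already establishes that $\langle\phi_{a_1,b_1},\phi_{a_2,b_2}\rangle\cong\ZZ_4^2$ and maps isomorphically onto $\langle[b_1],[b_2]\rangle$ under $\pi_V\circ\Psi$; since $\theta$ lies in $\ker(\pi_V\circ\Psi|_Q)$, the intersection $\langle\phi_{a_1,b_1},\phi_{a_2,b_2}\rangle\cap\langle\theta\rangle$ is trivial. This shows $a\geq 3$, leaving only the possibilities $Q\cong\ZZ_4^3\times\ZZ_2^2$ or $Q\cong\ZZ_4^4$.

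The main (essentially only) step will be ruling out $Q\cong\ZZ_4^4$. For this I would use the elementary fact that any order-$4$ element of $\ZZ_4^4$ has at least one coordinate that is a unit of $\ZZ_4$, so its cyclic subgroup is a direct summand and the resulting quotient is isomorphic to $\ZZ_4^3$. Applied to $\theta\in Q$ under the assumption $Q\cong\ZZ_4^4$, this would force $Q/\langle\theta\rangle\cong\ZZ_4^3$. But $Q/\langle\theta\rangle\cong\pi_V\circ\Psi(Q)\cong\ZZ_4^2\times\ZZ_2^2$, which has a larger $2$-torsion rank and is therefore not isomorphic to $\ZZ_4^3$. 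This contradiction yields the result: $Q\cong\ZZ_4^3\times\ZZ_2^2$.
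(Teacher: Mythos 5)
Your proposal is correct and takes essentially the same approach as the paper: both rest on the short exact sequence $1\to\langle\theta\rangle\to Q\to\pi_V\circ\Psi(Q)\to 1$ together with the exponent bound. The paper's proof is a one-liner that merely records $\lvert Q\rvert=2^8$, exponent $4$, $\ker(\pi_V\circ\Psi\vert_Q)\cong\ZZ_4$, and $\pi_V\circ\Psi(Q)\cong\ZZ_4^2\times\ZZ_2^2$; you helpfully make explicit the remaining step (observing that $\langle\phi_{a_1,b_1},\phi_{a_2,b_2},\theta\rangle\cong\ZZ_4^3$ to reduce to two candidates, and then ruling out $\ZZ_4^4$ by the quotient argument), which the paper leaves implicit.
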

\begin{proof}
$Q$ is a $2$-group of exponent $4$, with $\pi_V\circ\Psi(Q)\cong\ZZ_4^2\times\ZZ_2^2$ and $\ker\bigl(\pi_V\circ\Psi\vert_Q\bigr)\cong\ZZ_4$.
\end{proof}

Moreover, the quasitorus $\tilde Q=\pi_V\circ\Psi(Q)\,\bigl(\cong \ZZ_4^2\times\ZZ_2^2\bigr)$ induces a division grading on $\End_\FF(V)$.
By Remark \ref{re:division_grading}, $b_1$ and $b_2$ are homogeneous elements, and the arguments in Section \ref{se:division_grading} prove that $\alg\langle b_1,b_2\rangle$ is isomorphic to $M_4(\FF)$, and $\End_\FF(V)=\alg\langle b_1,b_2\rangle\otimes C$, where $C$ is the centralizer in $\End_\FF(V)$ of $\alg\langle b_1,b_2\rangle$. Thus $C$ is a graded subalgebra of $\End_\FF(V)$ isomorphic to $M_2(\FF)$. Hence we may identify $V=V_1\otimes V_2$, $\dim V_1=4$, $\dim V_2=2$, and $\alg\langle b_1,b_2\rangle =\End_\FF(V_1)$, $C=\End_\FF(V_2)$.

Since $C$ is a graded subalgebra, $\tilde Q=\langle [b_1],[b_2],[c_1],[c_2]\rangle$, for elements $c_i\in\SL(V_2)\,\bigl(\subseteq C\bigr)$, $c_i^2=-1_{V_2}$, $i=1,2$, and $c_1c_2=-c_2c_1$. Then there are elements $\hat a_i\in\SL(U)$ such that $\phi_{\hat a_i,c_i}\in Q$, $i=1,2$.

The commutativity of $Q$ gives $\phi_{\hat a_i,c_i}\phi_{a_j,b_j}=\phi_{a_j,b_j}\phi_{\hat a_i,c_i}$ for any $i,j=1,2$, and since $a_1$ and $a_2$ generates $\End_\FF(U)$ and $c_ib_j=b_jc_i$, it follows that $\hat a_i\in\FF^\times 1_U$, so $\hat a_i=\pm 1_U$ ($\det(\hat a_i)=1$). Composing with $\phi_{-1_U,1_V}=\phi_{1_U,\omega^21_V}=\theta^2$ if needed, we get $\phi_{1_U,c_i}\in Q$, $i=1,2$, and hence we obtain the following result:

\begin{theorem}\label{th:4I}
Under the conditions above, the maximal quasitorus $Q$ is isomorphic to $\ZZ_4^3\times\ZZ_2^2$. Moreover, it is given explicitly by
\begin{equation}\label{eq:4I}
Q=\langle \phi_{a_1,b_1},\phi_{a_2,b_2},\phi_{1_U,c_1},\phi_{1_U,c_2},\theta\rangle.
\end{equation}
\end{theorem}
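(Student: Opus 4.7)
The isomorphism type $Q\cong\ZZ_4^3\times\ZZ_2^2$ is already the content of the corollary just above, so my plan is to verify only the explicit presentation \eqref{eq:4I}. The strategy is to check that each of the five listed elements belongs to $Q$ and then to prove, via a short-exact-sequence argument, that they generate $Q$.

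First, I would observe that the preceding discussion has already placed each generator in $Q$. The elements $\phi_{a_1,b_1}$ and $\phi_{a_2,b_2}$ were exhibited as members of $Q$ lifting the generators $[a_1],[a_2]$ of $\pi_U\circ\Psi(Q)\cong\ZZ_2^2$. The elements $\phi_{1_U,c_1}$ and $\phi_{1_U,c_2}$ arose from the division grading on $\End_\FF(V)$ induced by $\tilde Q=\pi_V\circ\Psi(Q)$: after identifying $V=V_1\otimes V_2$ with $\alg\langle b_1,b_2\rangle\cong\End_\FF(V_1)$, the graded centralizer $C\cong\End_\FF(V_2)$ supplies homogeneous generators $c_1,c_2\in\SL(V_2)$, and the automorphisms in $Q$ lifting $[c_i]$ were forced to have first coordinate $\pm 1_U$ (by commutativity with $\phi_{a_1,b_1}$ and $\phi_{a_2,b_2}$ together with the fact that $a_1,a_2$ generate $\End_\FF(U)$) and then normalized to $1_U$ by composing with $\theta^2=\phi_{-1_U,1_V}$. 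Finally, $\theta\in Q$ by hypothesis.

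The generation step is then a counting argument. Let $H$ be the subgroup of $Q$ generated by the five listed elements. Applying $\pi_V\circ\Psi$, the images $[b_1],[b_2],[c_1],[c_2]$ are exactly the generators of $\pi_V\circ\Psi(Q)\cong\ZZ_4^2\times\ZZ_2^2$ supplied by the division grading analysis, so $\pi_V\circ\Psi(H)=\pi_V\circ\Psi(Q)$. Since $\theta\in H$ and $\langle\theta\rangle=\ker(\pi_V\circ\Psi|_Q)$ by Lemma~\ref{le:4Ibis}, the short exact sequence
\[
1\longrightarrow\langle\theta\rangle\longrightarrow Q\xrightarrow{\pi_V\circ\Psi}\pi_V\circ\Psi(Q)\longrightarrow 1
\]
forces $H=Q$.

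All the substantive work was done in the discussion preceding the statement: ruling out $\pi_V\circ\Psi(Q)\cong\ZZ_2^6$ via the commutation relation $b_1b_2=\omega^{\pm 2}b_2b_1$, producing the tensor decomposition $V=V_1\otimes V_2$ via the graded Double Centralizer Theorem, and adjusting signs in the first coordinate using $\theta^2$. Given those inputs, the theorem itself amounts to routine bookkeeping, and I would not expect any additional obstacle.
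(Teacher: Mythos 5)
Your proof is correct and matches the paper's approach: the paper does not give a separate proof of Theorem~\ref{th:4I}, but instead relies on exactly the preceding discussion (the elements $\phi_{a_i,b_i}$ and $\phi_{1_U,c_i}$ lying in $Q$, the identity $\tilde Q=\pi_V\circ\Psi(Q)=\langle[b_1],[b_2],[c_1],[c_2]\rangle$, and Lemma~\ref{le:4Ibis}) to conclude. Your explicit short-exact-sequence formulation of the generation step just makes visible the argument the paper leaves implicit.
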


\begin{corollary}\label{co:4I}
Up to conjugation, $\Aut(\fre_8)$ contains a unique maximal finite abelian subgroup with automorphisms of order $4$ and type I.
\end{corollary}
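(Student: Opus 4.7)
The plan is to reduce the uniqueness of $Q$ (up to conjugation in $G=\Aut(\fre_8)$) to the uniqueness of certain division gradings on matrix algebras, and then lift the conjugating element from $\SL(U)\times\SL(V)$ to $C_G(\theta)$ through the surjection $\Phi$.

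First, by Kac's classification \cite[\S 8.6]{Kac} any two order $4$ automorphisms of $\fre_8$ of type I are conjugate in $G$, so after replacing $Q'$ by a conjugate I may assume both $Q$ and $Q'$ contain the same distinguished $\theta$ and thus lie in $C_G(\theta)$. Theorem \ref{th:4I} then presents each of them as the subgroup generated by $\theta$ together with four automorphisms $\phi_{a_i,b_i}$ ($i=1,2$) and $\phi_{1_U,c_k}$ ($k=1,2$), with tuples $(a_i,b_j,c_k)$ and $(a_i',b_j',c_k')$ respectively, satisfying the commutation relations of that theorem.

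Next I would produce $a\in\SL(U)$ and $b\in\SL(V)$ carrying one tuple onto the other up to central scalars. The pair $(a_1,a_2)$ realises in $\End_\FF(U)\simeq M_2(\FF)$ the unique (up to graded isomorphism) division $\ZZ_2^2$-grading, which by Section \ref{se:division_grading} determines it up to simultaneous conjugation in $\SL(U)$; this gives $a$. The quadruple $(b_1,b_2,c_1,c_2)$ realises in $\End_\FF(V)\simeq M_8(\FF)$ a division grading of type $\ZZ_4^2\times\ZZ_2^2$, with $b_1,b_2$ generating the $M_4(\FF)$-factor and $c_1,c_2$ generating its centralizer $M_2(\FF)$; by the same classification (the pair $(T,\beta)$ determines the graded division algebra up to graded isomorphism) there exists $b\in\SL(V)$ simultaneously conjugating $(b_1,b_2,c_1,c_2)$ onto $(b_1',b_2',c_1',c_2')$ up to scalars.

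With these $a$ and $b$, the automorphism $\phi_{a,b}\in C_G(\theta)$ (Lemma \ref{le:4I}) sends $\phi_{a_i,b_i}$ to $\phi_{aa_ia^{-1},bb_ib^{-1}}$ and $\phi_{1_U,c_k}$ to $\phi_{1_U,bc_kb^{-1}}$. The scalars that relate these to $\phi_{a_i',b_i'}$ and $\phi_{1_U,c_k'}$ lie in $\ker\Phi=\langle(-1_U,\omega^21_V)\rangle$, so they are sent by $\Phi$ to $\id_\frg$ and do not affect the group generated. Hence $\phi_{a,b}Q\phi_{a,b}^{-1}=Q'$, which proves the corollary.

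The step I expect to be the main obstacle is the uniqueness, up to conjugation in $\SL(V)$, of the quadruple $(b_1,b_2,c_1,c_2)$ defining the division grading of type $\ZZ_4^2\times\ZZ_2^2$ on $M_8(\FF)$. This is the key technical input, supplied by the classification recalled in Section \ref{se:division_grading}; with that in hand, the rest of the argument is bookkeeping about scalars and the kernel of $\Phi$.
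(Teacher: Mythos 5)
Your argument is correct and follows the approach the paper leaves implicit: Corollary~\ref{co:4I} is stated with no separate proof because it is meant to follow at once from Theorem~\ref{th:4I}, and you are making the deduction explicit by producing the conjugating element $\phi_{a,b}\in C_G(\theta)$. The reduction via Kac's classification to a common $\theta$, the use of the rigidity of division gradings (the pair $(T,\beta)$ determines the graded division algebra up to graded isomorphism, and the generating tuples are unique up to simultaneous conjugation) to produce $a\in\SL(U)$ and $b\in\SL(V)$, and the lifting through the surjection $\Phi$ are exactly the right ingredients, and match the structure established in the discussion leading to Theorem~\ref{th:4I}.

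One small inaccuracy in the bookkeeping at the end: it is not true that the scalar discrepancies necessarily lie in $\ker\Phi$. Writing $a a_i a^{-1}=\epsilon_i a_i'$ with $\epsilon_i=\pm1$ and $b b_i b^{-1}=\delta_i b_i'$ with $\delta_i^4=1$, one has $\phi_{a,b}\phi_{a_i,b_i}\phi_{a,b}^{-1}=\phi_{a_i',b_i'}\,\phi_{\epsilon_i 1_U,\delta_i 1_V}$, and $\phi_{\epsilon_i 1_U,\delta_i 1_V}$ acts on $\frg\subuno$ as multiplication by $\epsilon_i\delta_i^2\in\{\pm1\}$; when this is $-1$ the correction factor is $\theta^2$, not the identity, so $(\epsilon_i 1_U,\delta_i 1_V)\notin\ker\Phi$. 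The conclusion is nevertheless unaffected, because $\theta^2\in Q'$, so $\phi_{a,b}\phi_{a_i,b_i}\phi_{a,b}^{-1}$ still lies in $Q'$; the correct statement is that the corrections are powers of $\theta$, not that they vanish.
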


This unique maximal finite abelian subgroup contains too automorphisms of order $4$ corresponding to the diagram:
\[
\begin{picture}(80,20)
	\put(0,12){\circle{2}}
	\put(10,12){\circle{2}}
	\put(20,12){\circle{2}}
    \put(30,12){\circle*{2}}
    \put(40,12){\circle{2}}
    \put(50,12){\circle{2}}
    \put(60,12){\circle{2}}
    \put(70,12){\circle{2}}
    \put(50,2){\circle{2}}
	
    \put(1,12){\line(1,0){8}}
    \put(11,12){\line(1,0){8}}
    \put(21,12){\line(1,0){8}}
    \put(31,12){\line(1,0){8}}
    \put(41,12){\line(1,0){8}}
    \put(51,12){\line(1,0){8}}
    \put(61,12){\line(1,0){8}}
    \put(50,3){\line(0,1){8}}
	
	\put(-1,14){\small 1}
	\put(9,14){\small 2}
	\put(19,14){\small 3}
    \put(29,14){\small 4}
    \put(39,14){\small 5}
    \put(49,14){\small 6}
    \put(59,14){\small 4}
    \put(69,14){\small 2}
    \put(52,1){\small 3}
	
\end{picture}
\]
which will be said of \emph{type II}.

\begin{proposition}\label{pr:4I_II}
The maximal abelian quasitorus in Equation \eqref{eq:4I} contains automorphisms of order $4$ and type II:
\end{proposition}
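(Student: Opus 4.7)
Proof plan. My plan is to exhibit an explicit element of $Q$ of order $4$ and, by computing the dimension (and hence the isomorphism type) of its fixed subalgebra, identify it with the type II conjugacy class. A natural candidate, mixing an order-$4$ generator with one of the involutions, is
\[
\varrho\bydef\phi_{a_1,b_1}\cdot\phi_{1_U,c_1}=\phi_{a_1,b_1c_1}\in Q,
\]
where $b_1c_1=b_1|_{V_1}\otimes c_1|_{V_2}$ inside $\SL(V)=\SL(V_1\otimes V_2)$.

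I would first check that $\varrho$ has order exactly $4$. Since $b_1$ and $c_1$ act on different tensor factors they commute, so $(b_1c_1)^2=-b_1^2$; combined with the kernel identifications $\phi_{1_U,-1_V}=\id$ from Lemma~\ref{le:4I}, this gives $\varrho^2=\phi_{-1_U,b_1^2}$, which is nontrivial because $b_1^2$ is not a scalar, while $\varrho^4=\phi_{1_U,b_1^4}=\id$ since $b_1^4\in\{\pm 1_V\}$. The central step is then to compute $\dim\fre_8^\varrho$. Because $\varrho$ centralizes $\theta$ it preserves the $\ZZ_4$-grading $\fre_8=\bigoplus_{\bar r}\frg_{\bar r}$, so the fixed subspace breaks up as $\bigoplus_{\bar r}(\frg_{\bar r})^\varrho$. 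A convenient observation is that $b_1c_1$ acts on $V$ with the same multiset of eigenvalues as $b_1$ (each primitive $8$th root of unity appearing with multiplicity $2$); consequently the eigenvalue analysis for $\wedge^\ell(b_1c_1)$ on $\wedge^\ell V$ reduces formally to the analysis for $\wedge^\ell b_1$. Decomposing $V$ into its four $2$-dimensional $b_1c_1$-eigenspaces, splitting each $\wedge^\ell V$ along the partitions $\sum_k a_k=\ell$ with $a_k\in\{0,1,2\}$, and combining with the $\pm\bi$ eigenvalues of $a_1$ on $U$, a careful tally yields
\[
\dim(\frg_{\bar 0})^\varrho=16,\quad\dim(\frg_{\bar 1})^\varrho=12,\quad\dim(\frg_{\bar 2})^\varrho=20,\quad\dim(\frg_{\bar 3})^\varrho=12,
\]
so $\dim\fre_8^\varrho=60$.

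Finally, Lemma~\ref{le:gtheta_semisimple} tells us that $\fre_8^\varrho$ is semisimple, and since $\varrho$ is inner its root system is a simply laced subsystem of $E_8$ of rank $8$; the only simply laced semisimple Lie algebra of rank $8$ and dimension $60$ is $A_3\oplus D_5\cong\frsl_4\oplus\frso_{10}$, which is exactly the fixed subalgebra attached to the type II Kac diagram. Kac's classification \cite[Proposition~8.6]{Kac} thus identifies $\varrho$ with a type II automorphism. The hardest step will be the combinatorial bookkeeping for the $\wedge^4 V$ computation, which is where the largest number of partition cases arises; once this is under control, the remaining arguments are routine.
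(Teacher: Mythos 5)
Your argument is correct and, at bottom, is the same as the paper's: compute $\dim\fre_8^\sigma=60$ for some order-$4$ element $\sigma$ of $Q$ and conclude, since $60\neq 66=\dim\fre_8^\theta$, that $\sigma$ is of type II. Two remarks on how your version differs. First, the choice of $\varrho=\phi_{a_1,b_1c_1}$ is an unnecessary detour: the paper works directly with $\phi_{a_1,b_1}$, which already has order $4$ (indeed $\phi_{a_1,b_1}^2=\phi_{-1_U,b_1^2}\neq\id$ since $b_1^2$ is not scalar, and $\phi_{a_1,b_1}^4=\phi_{1_U,-1_V}=\id$). Your own observation that $b_1c_1$ and $b_1$ have identical eigenvalue multisets on $V$ shows the two computations coincide term by term, so you gain nothing by multiplying in $\phi_{1_U,c_1}$. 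Second, your final step — identifying the fixed subalgebra as the unique rank-$8$ simply-laced semisimple algebra of dimension $60$, namely $A_3\oplus D_5$, and then matching it to a Kac diagram — is also more than is needed: by Lemma~\ref{le:gtheta_semisimple} every element of $Q$ has semisimple fixed subalgebra, and the paper's survey of \cite[Proposition~8.6]{Kac} already tells us there are exactly two conjugacy classes of order-$4$ automorphisms of $\fre_8$ with semisimple fixed subalgebra, of dimensions $66$ (type I) and $60$ (type II), so the dimension count alone finishes the job. Finally, the load-bearing step in both your proof and the paper's is the eigenvalue tally on $\frg_{\bar 0},\dots,\frg_{\bar 3}$, and in the proposal you only assert the answers $(16,12,20,12)$ rather than carry it out; for a complete write-up you would need to supply that computation (the $\wedge^4 V$ count requires summing $\binom{2}{a}\binom{2}{b}\binom{2}{c}\binom{2}{d}$ over compositions $a+b+c+d=4$ with $a,b,c,d\in\{0,1,2\}$ and $a+3b+5c+7d\equiv 0\pmod 8$, which gives $1+1+1+1+16=20$).
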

\begin{proof}
Note that the dimension of $\frg^\theta=\frg\subo\simeq \frsl(U)\oplus\frsl(V)$ is $66$, so it is enough to check that the dimension of $\frg^{\phi_{a_1,b_1}}$ is not $66$.

There is a basis of $U$ such that the coordinate matrix of $a_1$ is $\diag(\omega^2,-\omega^2)$, and with $V=V_1\otimes V_2$ as in the arguments leading to Theorem \ref{th:4I}, there are bases of $V_1$ and $V_2$ such that the coordinate matrix of $b_1$ is the Kronecker product $\diag(\omega,\omega^3,\omega^5,\omega^7)\otimes 1_{V_2}$. Now it is easy to compute the dimension of the subspaces of $\frg_{\bar r}$ of the elements fixed by $\phi_{a_1,b_1}$, $\bar r\in\ZZ_4$: $\dim\frg\subo^{\phi_{a_1,b_1}}=1+15=16$, $\dim\frg\subuno^{\phi_{a_1,b_1}}=12=\dim\frg_{\bar 3}^{\phi_{a_1,b_1}}$, and $\dim\frg_{\bar 2}^{\phi_{a_1,b_1}}=20$. Hence $\dim\frg^{\phi_{a_1,b_1}}=60$.
\end{proof}

\bigskip


\section{Order $4$ automorphism. Type II}

In order to deal with the maximal finite abelian subgroups of $\Aut(\fre_8)$ containing automorphisms of order $4$ and type II, we need to recall some results on gradings on simple Lie algebras of type $D_r$, $r\geq 5$ (see \cite[Chapter 3]{EK13}).

Given an abelian group $A$, the $A$-gradings on the orthogonal Lie algebra $\frso(V,q)$ of a vector space $V$ of even dimension $\geq 10$ endowed with a nondegenerate quadratic form $q$ are classified in terms of $A$-gradings on the matrix algebra $\cR=\End_\FF(V)$ compatible with the orthogonal involution $\varphi$ associated to $q$. Note that $\frso(V,q)=\cK(\cR,\varphi)\bydef\{x\in\cR : \varphi(x)=-x\}$. This graded matrix algebra $\cR$ is then isomorphic to $\End_\cD(W)$, where $\cD$ is a graded division algebra and $W$ is a graded right free module for $\cD$. The involution $\varphi$ forces the support $T$ of the grading on $\cD$ to be an elementary $2$-group. Moreover, $\varphi$ is the adjoint relative to a nondegenerate homogeneous hermitian form $B:W\times W\rightarrow \cD$ (i.e., $B$ is $\FF$-bilinear, $B(w_1,w_2)=\tau\bigl(B(w_2,w_1)\bigr)$ and $B(w_1,w_2d)=B(w_1,w_2)d$ for any $w_1,w_2\in W$ and $d\in \cD$, where $\tau$ is a grade-preserving involution on $\cD$). The graded division algebra $\cD$ is then determined by a nondegenerate alternating bicharacter $\beta$ on $T$: $\cD=\espan{X_t: t\in T}$, with $X_tX_s=\beta(t,s)X_sX_t$.

A homogeneous $\cD$-basis $\{v_1,\ldots,v_k\}$ in $W$ can be selected, $\deg v_i=g_i$, such that $B$ is represented by the block-diagonal matrix
\begin{equation}\label{eq:Dr_Phi}
\diag\left(X_{t_1},\ldots,X_{t_q},\matr{0&I\\ I&0},\ldots,\matr{0&I\\ I&0}\right),
\end{equation}
where $I=X_e\in\cD$ and all $X_{t_i}$ are symmetric relative to $\tau$ (see \cite{EldFineClassical} or \cite[Theorem 3.42]{EK13}), with
\begin{equation}\label{eq:Phi_compatibility}
g_1^2t_1=\cdots=g_q^2t_q=g_{q+1}g_{q+2}=\cdots=g_{q+2s-1}g_{q+2s}=g_0^{-1},
\end{equation}
with $q+2s=k$ and $g_0\in A$ is the degree of $B$ (i.e., for any $g,h\in A$, $B(W_g,W_h)\subseteq \cD_{ghg_0}$).

\smallskip

Assume finally that our maximal finite abelian subgroup $Q$ of $G=\Aut(\frg)$, with $\frg=\fre_8$,  contains an automorphism $\theta$ of order $4$ and type II.

The automorphism $\theta$ thus induces a grading by $\ZZ_4$: $\frg=\bigoplus_{\bar r\in\ZZ_4}\frg_{\bar r}$, where $\frg_{\bar r}=\{x\in\frg: \theta(x)=\bi^r x\}$, that satisfies:
\[
\begin{split}
\frg\subo&=\frsl(U)\oplus\frso(V,q),\\
\frg_{\bar 1}&=U\otimes V^+,\\
\frg_{\bar 2}&=\wedge^2 U\otimes  V,\\
\frg_{\bar 3}&=\wedge^3U\otimes V^-,
\end{split}
\]
for a $4$-dimensional vector space $U$ and a $10$-dimensional vector space $V$ endowed with a nondegenerate quadratic form. Here $V^+$ and $V^-$ denote the two half-spin representations of the orthogonal Lie algebra $\frso(V,q)$.

Denote by $C(V,q)$ the Clifford algebra of $(V,q)$, and by $x\cdot y$ the multiplication of any two elements $x,y\in C(V,q)$. Recall that $C(V,q)$ is a unital associative algebra generated by $V$ and that $v^{\cdot 2}=q(v)1$. The Clifford algebra $C(V,q)$ is $\ZZ_2$-graded with $\deg v=\bar 1$ for any $v\in V$. The \emph{spin group} is defined as
\[
\Spin(V,q)\bydef \{x\in C(V,q)\subo^\times : x\cdot V\cdot x^{-1}\subseteq V,\ x\cdot\varsigma(x)=1\},
\]
where $\varsigma$ is the involution (i.e., antiautomorphism of order $2$) of $C(V,q)$ such that $\varsigma(v)=v$ for any $v\in V$.

Let $\{e_1,\ldots,e_{10}\}$ be an orthogonal basis of $V$ with $q(e_i)=1$ for any $i$. Then the center of $C(V,q)\subo$ is $\FF 1\oplus\FF z$, with $z=e_1\cdot e_2\cdot\ldots\cdot e_{10}\in \Spin(V,q)$. Moreover $z^{\cdot 2}=-1$, so the order of $z$ is $4$. There is a surjective homomorphism onto the special orthogonal group:
\[
\begin{split}
\Spin(V,q)&\longrightarrow \SO(V,q)\\
s\quad&\mapsto\quad\iota_s,
\end{split}
\]
where $\iota_s(v)=s\cdot v\cdot s^{-1}=s\cdot v\cdot\varsigma(s)$, for any $v\in V$, whose kernel is $\{\pm 1\}$. Besides, $\iota_z=-1_V$, so the quotient $\Spin(V,q)/\langle z\rangle$ is isomorphic to the projective special orthogonal group $\PSO(V,q)$, which in turn is naturally isomorphic to the connected component $\Aut(\frso(V,q))^{\circ}$.

The Lie algebra $\frso(V,q)$ is isomorphic to the Lie subalgebra
\[
[V,V]^\cdot\bydef \espan{[u,v]^\cdot=u\cdot v-v\cdot u: u,v\in V}
\]
of $C(V,q)\subo^-$. This Lie subalgebra generates $C(V,q)\subo$ (as an associative algebra). The half-spin modules $V^{\pm}$ are the two irreducible modules for the semisimple associative algebra $C(V,q)\subo$ (which are then irreducible modules for $\frso(V,q)\simeq [V,V]^\cdot$). The central element $z$ acts on $V^+$ (respectively $V^-$) by multiplication by the scalar $\bi$ (respectively $-\bi$).

As for the previous cases, we have a homomorphism
\[
\begin{split}
\Phi:\SL(U)\times\Spin(V,q)&\longrightarrow C_G(\theta)\\
(a,s)\qquad&\mapsto\quad \phi_{a,s},
\end{split}
\]
such that $\phi_{a,s}\vert_{\frg\subuno}$ is given by:
\[
\phi_{a,s}(u\otimes x)=a(u)\otimes s. x,
\]
for any $a\in\SL(U)$, $s\in\Spin(V,q)$, $u\in U$ and $x\in V^+$, where $s.x$ denotes the action of the element $s\in C(V,q)\subo$ on $x\in V^+$. Note that $\frg\subuno$ generates $\frg$, so $\phi_{a,s}$ is determined by its action on $\frg\subuno$. The restriction of $\phi_{a,s}$ to $\frg\subo$ is then given by
\[
\phi_{a,s}(b,\sigma)=\bigl(\Ad_a(b),\Ad_s(\sigma)\bigr)
\]
where $\Ad_a(b)=aba^{-1}$ for $a\in \SL(U)$ and $b\in\frsl(U)$, and $\Ad_s(\sigma)=s\cdot\sigma\cdot s^{-1}$ for $s\in\Spin(V,q)$ and $\sigma\in\frso(V,q)\simeq [V,V]^\cdot$ (adjoint action inside $C(V,q)\subo$).
Observe that $\Ad_s(\sigma)=\iota_s\circ\sigma\circ\iota_{s^{-1}}$ for any $s\in \Spin(V,q)$ and $\sigma\in \frso(V,q)\simeq [V,V]^\cdot$. In what follows, the expression $\Ad_s$, for $s\in \Spin(V,q)$, will be used to denote this action of $\Spin(V,q)$ on $\frso(V,q)$.

There is also a group homomorphism:
\[
\begin{split}
\Psi: C_G(\theta)&\longrightarrow \Aut(\frg\subo)\\
\varphi\quad&\mapsto \quad\varphi\vert_{\frg\subo}.
\end{split}
\]

\begin{lemma}\label{le:4II}
\begin{romanenumerate}
\item $\im\Psi=\Aut(\frg\subo)^{\circ}\,\bigl(\simeq\PSL(U)\times\PSO(V,q)\bigr)$ and $\ker\Psi=\langle \theta\rangle$.
\item $\Phi$ is surjective and $\ker\Phi=\langle (-\bi 1_U,z)\rangle$, which is a cyclic group of order $4$.
\end{romanenumerate}
\end{lemma}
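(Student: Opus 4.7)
The plan is to mimic the proof of Lemma~\ref{le:6}, replacing the $\SL(W)$ factor by the spin cover $\Spin(V,q)\to\PSO(V,q)$ and keeping careful track of its nontrivial kernel $\{\pm 1\}$ and of the central element $z$.

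For part (i), I would first compute $\ker\Psi$. If $\varphi\in C_G(\theta)$ satisfies $\varphi|_{\frg\subo}=\id$, then Schur's Lemma applied to the irreducible $\frg\subo$-module $\frg\subuno=U\otimes V^+$ gives $\varphi|_{\frg\subuno}=\lambda\,\id$ for some $\lambda\in\FF^\times$. Since $\frg\subuno$ generates $\frg$ as a Lie algebra, and $\theta|_{\frg\subuno}=\bi\,\id$, the only possibilities are $\lambda^4=1$, whence $\varphi\in\langle\theta\rangle$. For $\im\Psi$: the connectedness of $C_G(\theta)$ (inherited from $G$ being simply connected, exactly as in the proof of Lemma~\ref{le:gtheta_semisimple}) yields $\im\Psi\subseteq\Aut(\frg\subo)^{\circ}\simeq\PSL(U)\times\PSO(V,q)$, and the reverse inclusion will drop out of the surjectivity of $\Phi$ in (ii), since $\Psi\circ\Phi$ sends $(a,s)$ to $(\Ad_a,\Ad_s)$, which fills the whole connected component as $(a,s)$ ranges over $\SL(U)\times\Spin(V,q)$.

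For the surjectivity of $\Phi$ in (ii), given $\varphi\in C_G(\theta)$ I would lift $\Psi(\varphi)\in\PSL(U)\times\PSO(V,q)$ through the two canonical covers to a pair $(a,s)\in\SL(U)\times\Spin(V,q)$, so that $\Psi(\varphi)=\Psi(\phi_{a,s})$ and thus $\varphi\phi_{a,s}^{-1}\in\ker\Psi=\langle\theta\rangle$. It then suffices to exhibit $\theta$ in $\im\Phi$: noting that $z$ is central in $C(V,q)\subo$ (so $\Ad_z=\id$ on $\frso(V,q)$) and that $z$ acts on $V^+$ by $\bi$, one checks $\theta=\phi_{1_U,z}$.

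Finally, to compute $\ker\Phi$, I would solve $\phi_{a,s}=\id$ component by component. On $\frsl(U)$ this forces $a$ to be central in $\SL(U)$, hence $a=\zeta\,1_U$ with $\zeta^4=1$; on $\frso(V,q)$ it forces $\iota_s$ to commute with the whole $\frso(V,q)$-action on $V$, so $\iota_s=\pm 1_V$ and therefore $s\in\{\pm 1,\pm z\}$; and the action on $\frg\subuno=U\otimes V^+$ demands that $s$ act on $V^+$ by $\zeta^{-1}$. Combining these with $z|_{V^+}=\bi\,\id$ leaves exactly four admissible pairs, which assemble into the cyclic group of order $4$ generated by $(-\bi\,1_U,z)$, with square $(-1_U,-1)$. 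The one real point that will require care, and the only foreseeable obstacle, is the coherent interlocking of the two covers $\SL(U)\to\PSL(U)$ and $\Spin(V,q)\to\PSO(V,q)$ with their different central structures, together with keeping straight the eigenvalue $\bi$ of $z$ on $V^+$ (as opposed to $-\bi$ on $V^-$); once these are fixed the remaining identifications are routine.
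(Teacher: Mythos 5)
Your proposal is correct and follows essentially the same line as the paper's own proof: Schur's Lemma on the irreducible $\frg\subuno=U\otimes V^+$ to pin down $\ker\Psi=\langle\theta\rangle$, connectedness of $C_G(\theta)$ (from Lemma~\ref{le:gtheta_semisimple}) to place $\im\Psi$ in $\Aut(\frg\subo)^{\circ}$, lifting through $\SL(U)\to\PSL(U)$ and $\Spin(V,q)\to\PSO(V,q)$ to get surjectivity of $\Phi$ modulo $\langle\theta\rangle$, and the observation $a\in\FF^\times 1_U$, $s\in\langle z\rangle$ combined with the $\bi$-eigenvalue of $z$ on $V^+$ to identify $\ker\Phi=\langle(-\bi\,1_U,z)\rangle$. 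The only cosmetic difference is writing $\theta=\phi_{1_U,z}$ instead of the paper's $\phi_{\bi\,1_U,1}$, which differ by an element of $\ker\Phi$ and are therefore the same automorphism.
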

\begin{proof}
If $\varphi$ is in $\ker\Psi$, then $\varphi\vert_{\frg\subo}=\id$ and, by Schur's Lemma, $\varphi$ acts as a scalar on $\frg\subuno$. This scalar must be a fourth root of $1$ and hence $\varphi$ is a power of $\theta$.

For any $\varphi\in C_G(\theta)$, the restriction $\varphi\vert_{\frg\subo}$ lies in $\Aut(\frg\subo)^{\circ}\simeq\PSL(U)\times \PSO(V,q)$ because $C_G(\theta)$ is connected (see the proof of Lemma \ref{le:gtheta_semisimple}), so there are elements $a\in \SL(U)$ and $s\in\Spin(V,q)$ such that $\varphi\vert_{\frg\subo}=\phi_{a,s}\vert_{\frg\subo}$. Hence $\varphi\phi_{a,s}^{-1}\in\langle\theta\rangle$. But $\theta=\phi_{\bi 1_U,1}\in\im\Phi$. It follows that $\Phi$ is onto. Also, $\im\Psi=\Psi(\im\Phi)$ fills $\PSL(U)\times \PSO(V,q)\simeq \Aut(\frg\subo)^{\circ}$.

Finally, if $\phi_{a,s}=\id$ for $a\in\SL(U)$ and $s\in\Spin(V,q)$, then $\Ad_a=\id$ and $\Ad_s=\id$, so $a\in\FF^\times 1_U$ and $s\in Z\bigl(C(V,q)\subo\bigr)\cap\Spin(V,q)=\langle z\rangle$. But $\phi_{\lambda 1_U,z^r}=\id$ if and only if $\lambda \bi^r=1$, or $\lambda=(-\bi)^r$. Then $(a,s)=(-\bi 1_U,z)^r$, so $\ker\Phi=\langle (-\bi 1_U,z)\rangle$.
\end{proof}

Denote by $\pi_U$ and $\pi_V$ the projections of $\Aut(\frg\subo)^{\circ}$ onto $\PSL(U)\simeq\Aut(\frsl(U))^{\circ}$ and $\PSO(V,q)\simeq\Aut(\frso(V,q))^{\circ}$ respectively. By Theorem \ref{th:slV}, $\pi_U\circ\Psi(Q)$ is isomorphic to either $\ZZ_4^2$ or $\ZZ_2^4$. Besides, $\pi_U\circ\Psi(Q)\subseteq \PSL(U)\subseteq \Aut(\End_\FF(U))$ induces a division grading on $\End_\FF(U)=\cD$. Therefore, the Brauer invariant of the $\frsl(U)$-module $U$ is $\Br(U)=[\cD]$.

\begin{lemma}\label{le:kerpiVpsi}
The kernel of the restriction of $\pi_V\circ\Psi$ to $Q$ is $\langle \theta\rangle$, so it coincides with the kernel of the restriction of $\Psi$ to $Q$.
\end{lemma}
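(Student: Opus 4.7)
The plan is straightforward: one inclusion is free, and for the other we use the surjectivity of $\Phi$ together with the commutativity of $Q$ and the fact that the induced grading on $\frsl(U)$ has trivial neutral component.

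First, since $\theta\in\ker\Psi$ by Lemma~\ref{le:4II}(i), we have $\theta\in\ker(\pi_V\circ\Psi)$, so $\langle\theta\rangle\subseteq\ker\bigl(\pi_V\circ\Psi\vert_Q\bigr)$ and similarly $\langle\theta\rangle\subseteq\ker(\Psi\vert_Q)$. Conversely, $\ker(\Psi\vert_Q)\subseteq\ker\Psi=\langle\theta\rangle$, giving the second equality in the statement. So it remains to show the reverse inclusion $\ker\bigl(\pi_V\circ\Psi\vert_Q\bigr)\subseteq\langle\theta\rangle$.

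Let $\varphi\in Q$ with $\pi_V\circ\Psi(\varphi)=1$. By the surjectivity of $\Phi$ (Lemma~\ref{le:4II}(ii)), write $\varphi=\phi_{a,s}$ with $a\in\SL(U)$ and $s\in\Spin(V,q)$. The hypothesis says that $\Ad_s$ is the identity on $\frso(V,q)$, equivalently, $s$ projects to the identity of $\PSO(V,q)$. Since the kernel of $\Spin(V,q)\to\PSO(V,q)$ is $\langle z\rangle$ (of order $4$), we have $s=z^k$ for some $k$, so $\varphi=\phi_{a,z^k}$.

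Next, exploit commutativity. For any $\phi_{a',s'}\in Q$, the equality $\phi_{a,z^k}\phi_{a',s'}=\phi_{a',s'}\phi_{a,z^k}$ reads $\phi_{aa',z^ks'}=\phi_{a'a,s'z^k}$; as $z$ is central in $C(V,q)\subo$, this forces $(aa'(a'a)^{-1},1)\in\ker\Phi=\langle(-\bi 1_U,z)\rangle$, whose only element with second coordinate $1$ is the identity. Hence $aa'=a'a$ in $\SL(U)$ for every such $a'$. Now the grading induced on $\frsl(U)$ by $\pi_U\circ\Psi(Q)$ has trivial neutral component (as $\Gamma$ itself does), so by Theorem~\ref{th:slV} the subgroup $\pi_U\circ\Psi(Q)$ is a maximal quasitorus of $\PSL(U)$ giving a division grading on $\End_\FF(U)$; by Remark~\ref{re:division_grading} (or the explicit description preceding Theorem~\ref{th:slV}) such homogeneous representatives $a'$ span $\End_\FF(U)$. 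Therefore $a$ commutes with all of $\End_\FF(U)$, forcing $a=\lambda 1_U$ for some $\lambda\in\FF^\times$ with $\lambda^4=1$.

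Finally, identify $\varphi$ as a power of $\theta$. From the proof of Lemma~\ref{le:4II}, $(-\bi 1_U,z)\in\ker\Phi$, so $\theta=\phi_{\bi 1_U,1}=\phi_{1_U,z}$, and thus $\theta^m=\phi_{\bi^m 1_U,1}=\phi_{1_U,z^m}$ for every integer $m$. Writing $\lambda=\bi^m$, we obtain
\[
\varphi=\phi_{\lambda 1_U,z^k}=\phi_{\bi^m 1_U,1}\,\phi_{1_U,z^k}=\theta^{m+k}\in\langle\theta\rangle,
\]
as desired. The only nontrivial step is the commutativity argument that squeezes $a$ into the scalars; everything else is bookkeeping with the kernels already computed in Lemma~\ref{le:4II}.
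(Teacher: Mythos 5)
Your proof is correct and follows essentially the same route as the paper's: use surjectivity of $\Phi$ to write $\varphi=\phi_{a,s}$, deduce $s\in\langle z\rangle$, then exploit commutativity of $Q$ together with the fact that the elements $a'$ with $\phi_{a',s'}\in Q$ generate $\End_\FF(U)$ (since $\pi_U\circ\Psi(Q)$ induces a division grading) to conclude $a$ is scalar. The only presentational differences are that the paper first normalizes to $s=1$ and then argues with explicit generators $x,y$ (respectively $x_1,y_1,x_2,y_2$) in the two cases $\ZZ_4^2$ and $\ZZ_2^4$, whereas you keep $s=z^k$ throughout and run a case-free commutant argument; both are minor streamlinings of the same idea.
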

\begin{proof}
Take $a\in\SL(U)$ and $s\in\Spin(V,q)$ with $\phi_{a,s}\in Q\cap \ker\bigl(\pi_V\circ\Psi\bigr)$. Then $\Ad_s=\id$, so $s\in\langle z\rangle$. But $\phi_{-\bi 1_U,z}=\id$, so if $s=z^r$, $\phi_{a,s}=\phi_{\bi^ra,1}$ and we may assume $s=1$.

Now, $\phi_{a,1}$ is in $Q$ and $Q$ is commutative. But (Theorem \ref{th:slV}) $\pi_U\circ\Psi(Q)$ is either equal to $\langle [x],[y]\rangle$ with $x,y\in\SL(U)$, $x^4=y^4=-1_U$, $xy=\bi yx$, or to $\langle [x_1],[y_1],[x_2],[y_2]\rangle$ with $x_i,y_i\in\SL(U)$, $x_i^2=y_i^2=-1_U$, $x_iy_i=-y_ix_i$, $x_iy_j=y_jx_i$, for $i,j\in\{1,2\}$, $i\ne j$, and $x_1x_2=x_2x_1$, $y_1y_2=y_2y_1$. In the first case (the other case is similar) $\alg\langle x,y\rangle=\End_\FF(U)$ and there are elements $s_x,s_y\in\Spin(V,q)$ such that $\phi_{x,s_x},\phi_{y,s_y}\in Q$. Since $\phi_{a,1}\phi_{x,s_x}=\phi_{x,s_x}\phi_{a,1}$ it follows that $ax=xa$. With the same argument we get $ay=ya$. Hence $a\in Z\bigl(\End_\FF(U)\bigr)\cap\SL(U)=\langle\bi 1_U\rangle$. Therefore $\phi_{a,1}\in\langle \theta=\phi_{\bi 1_U,1}\rangle$.
\end{proof}

The maximal quasitorus $Q$ induces a fine grading $\Gamma$ by the group of characters $A$ of $Q$, which is a refinement of the $\ZZ_4$-grading. Thus $\Gamma$ induces a grading $\Gamma\subo$ on $\frg\subo=\frsl(U)\oplus\frso(V,q)$ by $A$ with support a subgroup of $A$ (the group of characters of $Q/(Q\cap \ker\Psi)=Q/\langle\theta\rangle$). It also induces a grading $\Gamma_{\bar r}$ on each $\frg_{\bar r}$ (an irreducible $\frg\subo$-module), $r=1,2,3$, compatible with $\Gamma\subo$. Then, up to a shift, $\Gamma_{\bar r}$ is the only $A$-grading on $\frg_{\bar r}$ compatible with the grading $\Gamma\subo$. Besides, $\End_\FF(\frg\subuno)\simeq \End_\FF(U)\otimes\End_\FF(V^+)$, and the representation of $\frg\subo$ on $\frg\subuno$ gives a graded homomorphism of associative algebras
\[
\rho: U(\frg\subo)\simeq U(\frsl(U))\otimes U(\frso(V,q))\longrightarrow
\End_\FF(\frg\subuno)\simeq \End_\FF(U)\otimes\End_\FF(V^+).
\]
Then the Brauer invariant of $\frg\subuno$, which is trivial since the grading on $\End_\FF(\frg\subuno)$ is elementary, satisfies
%
%
\[
1=\Br(U)\Br(V^+)=[\cD]\Br(V^+),
\]
and hence $\Br(V^+)=[\cD]^{-1}=[\cD^\text{op}]$.

On the other hand, working with the grading $\Gamma_{\bar 2}$ on $\frg_{\bar 2}$ we get (using \cite[Theorem 15]{EKpr})
\[
1=\Br(\wedge^2U)\Br(V)=[\cD]^2\Br(V).
\]
We are left with two cases.

\medskip

\subsection{Case a)}
If $\pi_U\circ\Psi(Q)$ is isomorphic to $\ZZ_2^4$, then $\cD=\alg\langle x_1,y_1,x_2,y_2\rangle$ with $x_i^2=y_i^2=-1_U$, $x_iy_i=-y_ix_i$, $x_iy_j=y_jx_i$, for $i,j\in\{1,2\}$, $i\ne j$, and $x_1x_2=x_2x_1$, $y_1y_2=y_2y_1$. That is, $\cD$
is a twisted group algebra $\FF^\sigma T$, where $T\simeq \ZZ_2^4$, where $X_{(\bar 1,\bar 0,\bar 0,\bar 0)}=x_1$, $X_{(\bar 0,\bar 1,\bar 0,\bar 0)}=y_1$, $X_{(\bar 0,\bar 0,\bar 1,\bar 0)}=x_2$ and $X_{(\bar 0,\bar 0,\bar 0,\bar 1)}=y_2$. Here $[\cD]=[\cD]^{-1}$.

In this case $\Br(V)=[\cD]^2=1$, and this means that the grading induced in $\frso(V,q)$ is elementary, i.e., induced by a grading on $V$. Since the neutral component is trivial: $\frso(V,q)_e=0$, there is an orthogonal homogeneous basis $\{e_1,\ldots,e_{10}\}$ of $V$, with $q(e_i)=1$ and $\deg(e_i)=g_i$ for any $i$, with $g_i^2=e$ for any $i$ (recall Equations \eqref{eq:Dr_Phi} and \eqref{eq:Phi_compatibility}, and we can adjust in this case so as to get $g_0=e$ by shifting the grading on $V$). Also, the condition $\frso(V,q)_e=0$ forces all the $g_i$'s to be different. Since $\pi_V\circ\Psi(Q)$ is contained in $\PSO(V,q)$, we have $g_1g_2\cdots g_{10}=e$ (see \cite[Lemma~33]{EKpr}).

The elements of $\pi_V\circ\Psi(Q)\,\bigl(\subseteq \PSO(V,q)\simeq \Aut(\frso(V,q))^\circ\bigr)$ lift then to $\SO(V,q)$ and they act diagonally in this basis with eigenvalues $1$ or $-1$. Therefore we have
\[
\pi_V\circ\Psi(Q)\subseteq \langle \Ad_{e_i\cdot e_j}: i\ne j\rangle,
\]
because $\iota_{e_i\cdot e_j}$ is the diagonal endomorphism with $e_i$ and $e_j$ eigenvectors with eigenvalue $-1$, and $e_h$ is fixed by $\iota_{e_i\cdot e_j}$ for $h\ne i,j$.

\begin{remark}
The map
\[
\begin{split}
\ZZ_2^9&\longrightarrow \langle \Ad_{e_i\cdot e_j}: i\ne j\rangle\\
\epsilon_i&\mapsto\quad \Ad_{e_i\cdot e_{i+1}},
\end{split}
\]
where $\epsilon_i=(\bar 0,\ldots,\bar 1,\ldots,\bar 0)$ ($\bar 1$ in the $i^\text{th}$ position), is a surjective homomorphism with kernel $\langle (\bar 1,\ldots,\bar 1)\rangle=\langle \epsilon_1+\cdots+\epsilon_9\rangle$, because $e_1\cdot e_2\cdot\ldots\cdot e_{10}=z$ and $\Ad_z=\id$. Therefore, the group $\langle \Ad_{e_i\cdot e_j}: i\ne j\rangle$ is isomorphic to $\ZZ_2^8$.
Hence $\pi_V\circ\Psi(Q)$ is isomorphic to $\ZZ_2^m$ with $m\leq 8$, and hence $Q$ is isomorphic to $\ZZ_4\times\ZZ_2^m$ ($4\leq m\leq 8$). We will see that $m=6$.

Besides, $e_1\cdot e_2\cdot\ldots\cdot e_8=-z\cdot e_9\cdot e_{10}$, so $\Ad_{e_1\cdot e_2\cdot\ldots\cdot e_8}=\Ad_{e_9\cdot e_{10}}$. In the same vein, $e_1\cdot e_2\cdot\ldots\cdot e_6=z\cdot e_7\cdot e_8\cdot e_9\cdot e_{10}$. Thus the group $\langle \Ad_{e_i\cdot e_j}: i\ne j\rangle$ consists of the elements $\id$, $\Ad_{e_i\cdot e_j}$ for $i\ne j$, and $\Ad_{e_i\cdot e_j\cdot e_k\cdot e_l}$ for different $i,j,k,l$. \qed
\end{remark}

\begin{lemma}\label{le:Xts}
For any element $\sigma\in\pi_V\circ\Psi(Q)$ there exists a unique element $t\in T$ and a unique class $s\langle z\rangle\in \Spin(V,q)/\langle z\rangle$ such that $\sigma=\pi_V\circ\Psi(\phi_{X_t,s})$.
\end{lemma}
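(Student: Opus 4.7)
My plan is: given $\sigma\in\pi_V\circ\Psi(Q)$, lift it to some $\varphi\in Q$, write $\varphi=\phi_{a,s}$ via the surjectivity of $\Phi$, and then adjust the $U$-component of $\varphi$ by a suitable power of $\theta$ so that it becomes exactly one of the basis elements $X_t$. Uniqueness will then drop out of Lemma \ref{le:kerpiVpsi}.

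For existence I pick $\varphi\in Q$ with $\pi_V\circ\Psi(\varphi)=\sigma$ and use Lemma \ref{le:4II}(ii) to write $\varphi=\phi_{a,s}$ with $(a,s)\in\SL(U)\times\Spin(V,q)$. With the choice of representatives $X_{(\bar r_1,\bar r_2,\bar r_3,\bar r_4)}\bydef x_1^{r_1}y_1^{r_2}x_2^{r_3}y_2^{r_4}\in\SL(U)$, the $16$ elements $\{X_t\}_{t\in T}$ are linearly independent in $\cD\subset\End_\FF(U)$ and their classes exhaust $\pi_U\circ\Psi(Q)=\langle[x_1],[y_1],[x_2],[y_2]\rangle$. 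Hence there is a unique $t\in T$ with $[a]=[X_t]$; equivalently $a=\mu X_t$ for some $\mu\in\FF^\times$, and the identity $\det(a)=1=\det(X_t)$ together with $\dim U=4$ forces $\mu^4=1$, so $\mu=\bi^k$ for some integer $k$. Since $\theta=\phi_{\bi 1_U,1}\in Q$, the element $\phi_{X_t,s}=\varphi\,\theta^{-k}$ lies in $Q$, and because $\theta\in\ker\Psi\subseteq\ker(\pi_V\circ\Psi)$ this element still projects to $\sigma$.

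For uniqueness I would suppose $\phi_{X_t,s}$ and $\phi_{X_{t'},s'}$ both lie in $Q$ and project to the same $\sigma$. Their quotient then sits in $Q\cap\ker(\pi_V\circ\Psi)=\langle\theta\rangle$ by Lemma \ref{le:kerpiVpsi}, so $\phi_{X_{t'},s'}=\phi_{X_t,s}\theta^m=\phi_{\bi^m X_t,\,s}$ for some $m$. Using $\ker\Phi=\langle(-\bi 1_U,z)\rangle$ from Lemma \ref{le:4II}(ii), this yields $(X_{t'},s')=(\bi^m X_t,s)\cdot(-\bi 1_U,z)^{\ell}$ for some $\ell$, whence $X_{t'}\in\FF^\times X_t$ and $s'\in s\langle z\rangle$. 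Since distinct $t\in T$ give pairwise distinct homogeneous lines $\FF X_t$ of the graded division algebra $\cD$, this forces $t=t'$.

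The only real obstacle is the careful bookkeeping of scalar ambiguities: fixing a normalization $X_t\in\SL(U)$, tracking that $\ker\Phi$ is the cyclic group of order $4$ generated by $(-\bi 1_U,z)$, and exploiting that $\theta$ is trivial on the $\Spin$-side while acting nontrivially on the $U$-side. With these in hand, everything else is a direct consequence of Lemmas \ref{le:4II} and \ref{le:kerpiVpsi}.
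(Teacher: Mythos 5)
Your proof is correct and follows the same strategy as the paper: existence is obtained by lifting $\sigma$ through $Q$ and invoking the surjectivity of $\Phi$ together with $\pi_U\circ\Psi(Q)=\{[X_t]:t\in T\}$, while uniqueness reduces to $Q\cap\ker(\pi_V\circ\Psi)=\langle\theta\rangle$ (Lemma~\ref{le:kerpiVpsi}) and the description of $\ker\Phi$. Your existence step is in fact a bit more explicit than the paper's: by normalizing $a=\bi^k X_t$ and multiplying by $\theta^{-k}$ you make it clear that the representative $\phi_{X_t,s}$ can be taken to lie in $Q$ itself, which is the point that both the uniqueness argument and the subsequent use of the lemma (``$Q$ is determined by $\pi_V\circ\Psi(Q)$ and $\theta$'') tacitly rely on.
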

\begin{proof}
Since $\pi_U\circ\Psi(Q)=\{[X_t]: t\in T\}\subseteq \PSL(U)$ by Remark \ref{re:Xt}, for any $\sigma\in\pi_V\circ\Psi(Q)$ there is an element $t\in T$ and an element $s\in\Spin(V,q)$ such that $\sigma=\phi_{X_t,s}\vert_{\frso(V,q)}$. If $\phi_{X_t,s}\vert_{\frso(V,q)}=\phi_{X_{t'},s'}\vert_{\frso(V,q)}$, then $\phi_{X_tX_{t'}^{-1},s(s')^{-1}}$ is in $Q\cap\ker\bigl(\pi_V\circ\Psi\bigr)=\langle\theta\rangle$ (Lemma \ref{le:kerpiVpsi}). Hence we obtain $(X_tX_{t'}^{-1},s(s')^{-1})\in\Phi^{-1}(\langle\theta\rangle)=\langle (-\bi 1_U,z),(1_U,z)\rangle$. It follows that $X_t\in\FF^\times X_{t'}$, so $t=t'$ and $s(s')^{-1}\in\langle z\rangle$.
\end{proof}

This shows that $Q$ is determined by $\pi_V\circ\Psi(Q)$ and $\theta=\phi_{1_U,z}$. In other words, the grading on $\frso(V,q)$ and the $\ZZ_4$-grading determine the fine grading $\Gamma$ on $\frg$ induced by $Q$.

\smallskip

We are going to reduce the problem of determining $Q$ to an easy combinatorial problem.

Since $\pi_U\circ\Psi(Q)=\langle [x_1],[y_1],[x_2],[y_2]\rangle$, there are elements $p_1,q_1,p_2,q_2\in\Spin(V,q)$ with $\phi_{x_1,p_1},\phi_{y_1,q_1},\phi_{x_2,p_2},\phi_{y_2,q_2}\in Q$. The commutativity of $Q$ gives $\phi_{x_1,p_1}\phi_{y_1,q_1}=\phi_{y_1,q_1}\phi_{x_1,p_1}$, so $\phi_{x_1y_1,p_1\cdot q_1}=\phi_{y_1x_1,q_1\cdot p_1}=\phi_{-x_1y_1,q_1\cdot p_1}=\phi_{x_1y_1,-q_1\cdot p_1}$ and we get $p_1\cdot q_1=-q_1\cdot p_1$. In this vein, we check that the elements $p_1,q_1,p_2,q_2$ in $\Spin(V,q)\subseteq C(V,q)\subo$ satisfy the same commutation relations as $x_1,y_1,x_2,y_2$. In particular the elements $p_i,q_i$ are not in $Z(\Spin(V,q))=\langle z\rangle$. Besides, $\Ad_{p_1}\in\langle \Ad_{e_i\cdot e_j}: i\ne j\rangle$, and $\theta\phi_{x_1,p_1}=\phi_{1_U,z}\phi_{x_1,p_1}=\phi_{x_1,z\cdot p_1}$, $\theta^2\phi_{x_1,p_1}=\phi_{x_1,-p_1}$. Hence we may replace $p_1$ by $z\cdot p_1$ or by $-p_1$, and the same for $p_2,q_1,q_2$. Therefore, we may assume that each $p_i$ or $q_i$ is of the form $e_i\cdot e_j$ for $i\ne j$, or $e_i\cdot e_j\cdot e_k\cdot e_l$, for different $i,j,k,l$.

Then $Q$ is generated by $\phi_{x_1,p_1}$, $\phi_{y_1,q_1}$, $\phi_{x_2,p_2}$, $\phi_{y_2,q_2}$ and by the elements in $Q$ of the form $\phi_{1_U,s}$ with $s\in\Spin(V,q)$. Moreover, these elements $s$ belong to $\{\pm e_I: \lvert I\rvert\ \text{even}\}$, where for any sequence $I=(i_1,\ldots,i_r)$ with $1\leq i_1<\cdots<i_r\leq 10$, $e_I$ denotes the element $e_{i_1}\cdot e_{i_2}\cdot\ldots\cdot e_{i_r}\in C(V,q)$. ($e_I$ belongs to $\Spin(V,q)$ if and only if $\lvert I\rvert =r$ is even.)
Besides, the commutativity of $Q$ forces that any element $s\in \Spin(V,q)$ such that $\phi_{1_U,s}\in Q$ commutes with  $p_1,q_1,p_2,q_2$, and any two elements $s,s'\in\Spin(V,q)$ with $\phi_{1_U,s},\phi_{1_U,s'}\in Q$ commute.

\begin{lemma}\label{le:No_phi_1eiej}
$Q$ does not contain any element of the form $\phi_{1_U,e_i\cdot e_j}$ with $1\leq i\ne j\leq 10$.
\end{lemma}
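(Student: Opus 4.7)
The strategy is to assume $\phi_{1_U,e_i\cdot e_j}\in Q$ for some $1\leq i\ne j\leq 10$ and derive a contradiction to Lemma~\ref{le:gtheta_semisimple}: since $Q$ is a maximal finite abelian subgroup of $G=\Aut(\fre_8)$, the fixed subalgebra $\frg^{\phi_{1_U,e_i\cdot e_j}}$ must be semisimple, so it suffices to exhibit a nonzero centre in it.

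I would begin by identifying this fixed subalgebra explicitly. From $(e_i\cdot e_j)^{\cdot 2}=-q(e_i)q(e_j)=-1$ one reads off that $\phi_{1_U,e_i\cdot e_j}$ has order $4$; moreover, since the square of $e_i\cdot e_j$ acting on the half-spin modules $V^{\pm}$ is $-1$, the element $e_i\cdot e_j$ acts there by scalars $\pm\bi$ only, and the $+1$-eigenspace of $\phi_{1_U,e_i\cdot e_j}$ is concentrated in $\frg\subo\oplus\frg_{\bar 2}$. On $\frg\subo=\frsl(U)\oplus\frso(V,q)$ the automorphism is the identity on $\frsl(U)$ and equals $\Ad_{e_i\cdot e_j}$ on $\frso(V,q)$; since $\iota_{e_i\cdot e_j}$ is $-1$ on $V_{ij}:=\espan{e_i,e_j}$ and $+1$ on the orthogonal complement $V_{ij}^\perp$, the fixed Lie subalgebra there is the block-diagonal $\frso(V_{ij})\oplus\frso(V_{ij}^\perp)$. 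On $\frg_{\bar 2}=\wedge^2 U\otimes V$ the automorphism acts as $1\otimes\iota_{e_i\cdot e_j}$, with $+1$-eigenspace exactly $\wedge^2 U\otimes V_{ij}^\perp$. Altogether,
\[
\cH:=\frg^{\phi_{1_U,e_i\cdot e_j}}=\bigl(\frsl(U)\oplus\frso(V_{ij})\oplus\frso(V_{ij}^\perp)\bigr)\oplus\bigl(\wedge^2 U\otimes V_{ij}^\perp\bigr).
\]

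Next, I would argue that the one-dimensional summand $\frso(V_{ij})=\FF\,[e_i,e_j]^\cdot$ lies in the centre of $\cH$. Inside $\cH\subo$ it commutes with both $\frsl(U)$ and $\frso(V_{ij}^\perp)$, since these sit in commuting direct summands of $\frg\subo$. Its bracket with $\wedge^2 U\otimes V_{ij}^\perp$ also vanishes: the action of $\frso(V,q)$ on $\frg_{\bar 2}$ is through its natural action on the second tensor factor, and $\frso(V_{ij})$ preserves the decomposition $V=V_{ij}\oplus V_{ij}^\perp$ while acting trivially on $V_{ij}^\perp$. Hence $\frso(V_{ij})\subseteq Z(\cH)$, so $\cH$ is not semisimple, contradicting Lemma~\ref{le:gtheta_semisimple}.

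The only step that requires some care is the last one, namely that $\frso(V_{ij})$ truly annihilates the module $\wedge^2 U\otimes V_{ij}^\perp$; once this is granted, the contradiction is immediate. Everything else reduces to routine eigenvalue bookkeeping for the explicit order-$4$ inner automorphism $\phi_{1_U,e_i\cdot e_j}$ acting on the $\frg\subo$-module decomposition of $\fre_8$.
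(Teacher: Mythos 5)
Your proof is correct, and while it shares the paper's starting point (identify the fixed subalgebra $\cH=\frg^{\phi_{1_U,e_i\cdot e_j}}$, apply Lemma~\ref{le:gtheta_semisimple} to force a contradiction), it reaches the contradiction by a genuinely different route. The paper observes that $\dim\cH\geq 92$, then invokes Kac's classification: any order-$4$ element of $Q$ has semisimple fixed subalgebra, hence is conjugate to the type~I or type~II order-$4$ automorphism, whose fixed subalgebras have dimensions $66$ and $60$ respectively, and $92$ matches neither. You instead show directly that the one-dimensional summand $\frso(V_{ij})=\FF\,[e_i,e_j]^\cdot$ lies in $Z(\cH)$: it commutes with $\frsl(U)$ and $\frso(V_{ij}^\perp)$ because these sit in orthogonal blocks, and it annihilates $\wedge^2 U\otimes V_{ij}^\perp$ because $[e_i,e_j]^\cdot=2\,e_i\cdot e_j$ commutes in $C(V,q)$ with every $e_k$, $k\neq i,j$, so acts trivially on $V_{ij}^\perp$. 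Your argument is somewhat more self-contained: it does not require knowing the fixed-point dimensions of the two order-$4$ automorphisms (which the paper derives in the surrounding discussion), at the modest cost of having to verify centrality rather than merely compare a dimension to a short list. One minor stylistic caveat: the phrase \emph{acts there by scalars $\pm\bi$ only} should be read as saying the operator on $V^{\pm}$ has eigenvalues $\pm\bi$ (since its square is $-1$), not that it is itself scalar; this is what makes the fixed parts of $\frg_{\bar 1}$ and $\frg_{\bar 3}$ vanish, completing the identification of $\cH$.
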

\begin{proof}
The automorphism $\phi_{1_U,e_i\cdot e_j}$ has order $4$ and fixes elementwise a subalgebra isomorphic to $\frsl(U)\oplus \frso_2\oplus\frso_8$ inside $\frg\subo$, and a subspace of the form $\wedge^2U\otimes W$, $\dim W=8$ in $\frg_{\bar 2}$. Hence $\dim\frg^{\phi_{1_U,e_i\cdot e_j}}\geq (15+1+28)+\binom{4}{2}\times 8=92$. But any order $4$ element in $Q$ is an automorphism of type I or II, whose subalgebras of fixed elements have dimension $66$ and $60$ respectively. Therefore $\phi_{1_U,e_i\cdot e_j}$ does not belong to $Q$ for any $i\ne j$.
\end{proof}

\begin{remark}\label{re:No_phi_1eiej}
Since $\theta\phi_{1_U,e_i\cdot e_j}=\phi_{1_U,z\cdot e_i\cdot e_j}$ and $\theta^2\phi_{1_U,e_i\cdot e_j}=\phi_{1_U,-e_i\cdot e_j}$, our maximal quasitorus $Q$ does not contain elements of the form $\phi_{1_U,\pm e_i\cdot e_j}$ or $\phi_{1_U,\pm e_{i_1}\cdot\ldots\cdot e_{i_8}}$ ($1\leq i_1<\cdots <i_8\leq 10$).
\end{remark}

Given two sequences $I=(i_1,\ldots,i_r)$ and $J=(j_1,\ldots,j_s)$, with $1\leq i_1<\cdots <i_r\leq 10$, $1\leq j_1<\cdots<j_s\leq 10$, consider the elements $e_I\bydef e_{i_1}\cdot e_{i_2}\cdot\ldots\cdot e_{i_r}$, $e_J\bydef e_{j_1}\cdot e_{j_2}\cdot\ldots\cdot e_{j_s}$ in $C(V,q)$ as above. Then $e_I\cdot e_J=(-1)^{\lvert I\cap J\rvert}e_J\cdot e_I$.

Identify $I$ with the element $x_I\in\ZZ_2^{10}$ with $\bar 1$'s in the components $i_1,\ldots, i_r$ and $\bar 0$'s elsewhere, and similarly for $J$. Then $e_I\cdot e_J=(-1)^{x_I\bullet x_J}e_J\cdot e_I$, where for elements $x,y\in\ZZ_2^{10}$, $x\bullet y=\sum_{i=1}^{10}x_iy_i$ denotes the natural symmetric nondegenerate bilinear form on $\ZZ_2^{10}$. In other words, the elements $e_I$ and $e_J$ commute in $C(V,q)$ if and only if $x_I$ and $x_J$ are orthogonal in $\ZZ_2^{10}$.

Note that $z=e_1\cdot e_2\cdot\ldots\cdot e_{10}=e_{(1,2,3,\ldots,10)}$. Let $K=\ZZ_2(\bar 1,\bar 1,\ldots,\bar 1)$, then $K^\perp$ is the subspace of the elements $x_I$ with $\lvert I\rvert$ even (i.e., $e_I\in\Spin(V,q)$). The bilinear form $\bullet$ induces a nondegenerate alternating bilinear form on $K^\perp/K$.

Then the problem of finding a maximal finite abelian subgroup $Q$ under the conditions above is equivalent to the problem of finding maximal subspaces $S$ of $K^\perp/K$ which are the orthogonal sum of two orthogonal hyperbolic planes $U_1$ and $U_2$ (corresponding to $\{p_1,q_1\}$ and $\{p_2,q_2\}$), and a totally isotropic subspace $U_3$ orthogonal to $U_1$ and $U_2$. That is, $S=U_1\perp U_2\perp U_3$, $U_i\bullet U_i\ne 0$, $i=1,2$, $U_3\bullet U_3=0$, $U_i\bullet U_j=0$ for $i\ne j$, and $\dim_{\ZZ_2}U_1=\dim_{\ZZ_2}U_2=2$, and with the extra condition that there is no $x+K\in U_3$ with $\lvert\supp(x)\rvert=2$ or $8$, where $\supp(x)$ is the set of indices with $x_i\ne \bar 0$ (because of Remark \ref{re:No_phi_1eiej}).

Since $\dim_{\ZZ_2}K^\perp/K=8$, the maximal dimension for $U_3$ is $2$ and in this case $S=U_3^\perp$. As above, let $\{\epsilon_1,\ldots,\epsilon_{10}\}$ be the canonical basis of $\ZZ_2^{10}$, and denote by $\bar x$ the class modulo $K$ of an element $x\in \ZZ_2^{10}$. Up to reordering of indices, the only `maximal' possibility is given by
\[
U_3=\ZZ_2(\overline{\epsilon_1+\epsilon_2+\epsilon_3+\epsilon_4})\oplus
\ZZ_2(\overline{\epsilon_3+\epsilon_4+\epsilon_5+\epsilon_6}).
\]
(Note that $\ZZ_2(\overline{\epsilon_1+\epsilon_2+\epsilon_3+\epsilon_4})\oplus
\ZZ_2(\overline{\epsilon_5+\epsilon_6+\epsilon_7+\epsilon_8})$ is not valid as $\overline{\epsilon_1+\cdots+\epsilon_8}=\overline{\epsilon_9+\epsilon_{10}}$, which is the class, modulo $K$, of an element with support of size $2$.)

Then, up to a reordering of indices, there is a unique maximal possibility:
\[
S=U_3^\perp=\espan{\overline{\epsilon_7+\epsilon_8},
\overline{\epsilon_8+\epsilon_9},
\overline{\epsilon_9+\epsilon_{10}},
\overline{\epsilon_1+\epsilon_2},
\overline{\epsilon_3+\epsilon_4},
\overline{\epsilon_5+\epsilon_6}}.
\]
Therefore, we obtain the main result of this \textbf{Case a)}:

\begin{theorem}\label{th:4II}
Up to conjugation, there is a unique maximal finite abelian subgroup $Q$ of $\Aut(\fre_8)$ with the properties that it contains an automorphism $\theta$ of order $4$ of type II, and the image of $Q$ in the automorphism group of the simple ideal of $(\fre_8)^\theta$ isomorphic to $\frsl_4(\FF)$ is $2$-elementary abelian. This maximal quasitorus is isomorphic to $\ZZ_4\times\ZZ_2^6$.
\end{theorem}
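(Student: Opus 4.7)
The plan is to consolidate the combinatorial analysis of Case a) into the stated structural conclusion. I will organize the proof into three steps, corresponding to the three pieces of information in the theorem (dimension of the quasitorus, its isomorphism type, and uniqueness up to conjugation).

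\emph{Step 1 (uniqueness of $S$ up to index permutation).} The space $K^\perp/K$ has dimension $8$ and carries a nondegenerate alternating form, so any configuration $S = U_1 \perp U_2 \perp U_3$ with $U_1,U_2$ hyperbolic planes and $U_3$ totally isotropic orthogonal to $U_1\perp U_2$ must satisfy $\dim S \le 2+2+2 = 6$. The display preceding the statement exhibits a specific $S$ of dimension $6$; the case analysis leading to it --- which exploits the restriction from Remark~\ref{re:No_phi_1eiej} forbidding classes in $U_3$ of support $2$ or $8$ --- shows that this $S$ is unique up to permutation of the indices $\{1,\ldots,10\}$.

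\emph{Step 2 (group structure $\ZZ_4\times\ZZ_2^6$).} By Lemma~\ref{le:kerpiVpsi}, $Q$ is an extension of $\pi_V\circ\Psi(Q) \cong \ZZ_2^6$ by $\langle\theta\rangle \cong \ZZ_4$, so $\lvert Q\rvert = 2^8$. To realize the splitting I select an order-$2$ lift for each generator of $\pi_V\circ\Psi(Q)$. The two generators in the image of $U_3$ lift to $\phi_{1_U,e_I}$ with $\lvert I\rvert = 4$, and $e_I^{\cdot 2} = 1$ shows these have order $2$. The four generators of $U_1\perp U_2$ can be represented by support-$2$ classes and lifted to $\phi_{X_t,e_a\cdot e_b}$ with $X_t\in\{x_1,y_1,x_2,y_2\}$; since $X_t^2 = -1_U$, $(e_a\cdot e_b)^{\cdot 2} = -1$, and $(-1_U,-1) = (-\bi 1_U,z)^2 \in \ker\Phi$, one obtains $\phi_{X_t,e_a\cdot e_b}^2 = \id$. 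Together with $\theta$, these six order-$2$ lifts yield $Q \cong \ZZ_4 \times \ZZ_2^6$.

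\emph{Step 3 (uniqueness up to conjugation).} The Kac classification summarized in Section~\ref{se:finite_order} makes $\theta$ unique up to conjugation in $\Aut(\fre_8)$, so the decomposition of $\fre_8$ and the homomorphisms $\Phi,\Psi$ are canonical. The remaining ambiguity --- the orthogonal basis $\{e_1,\ldots,e_{10}\}$ of $V$, the generators $x_i,y_i$ realizing the division grading on $\End_\FF(U)$ (unique up to simultaneous conjugation by Theorem~\ref{th:slV}), and the $\Spin(V,q)$-lifts --- is absorbed by inner conjugation in $\Aut(\frg\subo)^{\circ} \cong \PSL(U)\times\PSO(V,q)$, which extends through $C_G(\theta)$ to inner conjugation in $\Aut(\fre_8)$. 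Combined with Lemma~\ref{le:Xts}, this fixes $Q$ up to conjugacy.

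The hard part is Step~1: verifying by finite enumeration that the support restriction on $U_3$ really forces the given $S$. A typical failure mode is $U_3 = \ZZ_2(\overline{\epsilon_1+\epsilon_2+\epsilon_3+\epsilon_4}) \oplus \ZZ_2(\overline{\epsilon_5+\epsilon_6+\epsilon_7+\epsilon_8})$, which is excluded because the sum $\overline{\epsilon_1+\cdots+\epsilon_8} = \overline{\epsilon_9+\epsilon_{10}}$ has support $2$; systematically ruling out all such alternatives is the combinatorial heart of the argument.
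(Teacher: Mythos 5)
Your overall plan tracks the paper's: reduce to the combinatorics in $K^\perp/K$, deduce the isomorphism type, and argue conjugacy from canonicity of $\theta$ and the structure of $C_G(\theta)$. However, Step 2 contains a concrete error.

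You claim that all four generators of $U_1\perp U_2$ can be represented by support-$2$ classes and lifted to $\phi_{X_t,e_a\cdot e_b}$. This is impossible. The support-$2$ elements of $S$ are $\overline{\epsilon_1+\epsilon_2}$, $\overline{\epsilon_3+\epsilon_4}$, $\overline{\epsilon_5+\epsilon_6}$ and the six $\overline{\epsilon_i+\epsilon_j}$ with $i,j\in\{7,8,9,10\}$; modulo $K$ they span only a $5$-dimensional subspace $S'$ of the $6$-dimensional $S$ (since $\overline{\epsilon_1+\epsilon_2}+\overline{\epsilon_3+\epsilon_4}+\overline{\epsilon_5+\epsilon_6}+\overline{\epsilon_7+\epsilon_8}+\overline{\epsilon_9+\epsilon_{10}}=\bar 0$). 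Moreover $U_3\subseteq S'$, so $\dim\bigl((U_1\perp U_2)\cap S'\bigr)\le 3$, and at least one of your generators must have a support-$4$ representative (in the paper's realization, $q_2=e_1\cdot e_3\cdot e_6\cdot e_7$). For such a generator with $X_t^2=-1_U$ and $e_I^{\cdot 2}=1$ ($\lvert I\rvert=4$), the naive lift squares to $\phi_{-1_U,1}=\theta^2\ne\id$, so it has order $4$, not $2$; one must instead take $\phi_{X_t,z\cdot e_I}$, whose square is $\phi_{-1_U,z^{\cdot 2}\cdot e_I^{\cdot 2}}=\phi_{-1_U,-1}=\id$. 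This is exactly the $z$-twist the paper inserts in its concrete realization. The paper sidesteps the issue entirely by deducing $Q\cong\ZZ_4\times\ZZ_2^m$ structurally: $Q$ is an abelian $2$-group of exponent $\le 4$ whose quotient by the order-$4$ cyclic $\langle\theta\rangle$ is elementary abelian, which already forces $4$-rank one and hence the isomorphism type; no explicit splitting is needed. On Steps 1 and 3 you essentially restate the paper's structure without adding detail (you acknowledge Step 1 is unverified), and in Step 3 you should note that a permutation of the orthogonal basis has determinant $\pm 1$, so to land in $\PSO(V,q)$ you may need to compose with a sign change of one $e_i$, which does not affect any $\Ad_{e_i\cdot e_j}$.
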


A concrete realization of this maximal quasitorus is the subgroup generated by the elements $\phi_{x_1,e_8\cdot e_9}$, $\phi_{y_1,e_9\cdot e_{10}}$, $\phi_{x_2,e_3\cdot e_4}$, $\phi_{y_2,e_1\cdot e_3\cdot e_6\cdot e_7}$, $\phi_{1_U,e_1\cdot e_2\cdot e_3\cdot e_4}$, $\phi_{1_U,e_3\cdot e_4\cdot e_5\cdot e_6}$ and $\theta=\phi_{1_U,z}$. This is the cartesian product of the cyclic subgroup $\langle \theta\rangle$ of order $4$  and the cyclic subgroups generated by the order $2$ elements $\phi_{x_1,e_8\cdot e_9}$, $\phi_{y_1,e_9\cdot e_{10}}$, $\phi_{x_2,e_3\cdot e_4}$, $\phi_{y_2,z\cdot e_1\cdot e_3\cdot e_6\cdot e_7}$, $\phi_{1_U,e_1\cdot e_2\cdot e_3\cdot e_4}$, and $\phi_{1_U,e_3\cdot e_4\cdot e_5\cdot e_6}$.

\medskip

\subsection{Case b)}
If $\pi_U\circ\Psi(Q)$ is isomorphic to $\ZZ_4^2$, then $\cD=\alg\langle x,y\rangle$, with $x^4=y^4=-1$, $xy=\bi yx$, and this is a twisted algebra $\FF^\sigma T$ with $T\simeq \ZZ_4^2$. Hence, $[\cD]$ corresponds to the pair $(T,\beta)$, with $T=\ZZ_4^2$ and $\beta:T\times T\rightarrow \FF^\times$ the alternating bicharacter such that $\beta\bigl((\bar 1,\bar 0),(\bar 0,\bar 1)\bigr)=\bi$. In this case $[\cD]^4=1$ and $\Br(V)=[\cD]^{-2}=[\cD]^2$ corresponds to the pair $(\tilde T,\tilde \beta)$, with $\tilde T=\ZZ_2^2\,\bigl(\simeq (2\ZZ_4)^2\bigr)$, and $\tilde\beta:\tilde T\times\tilde T\rightarrow\FF^\times$ such that $\tilde\beta\bigl((\bar 1,\bar 0),(\bar 0,\bar 1)\bigr)=-1$.

The classification of gradings in the simple Lie algebras of type $D$ \cite[\S 3.6]{EK13} shows that the restriction of the grading $\Gamma$ induced by $Q$ (this is a grading by the group of characters $A$ of $Q$) to $\frso(V,q)$, which is a grading with trivial neutral component, is up to equivalence a grading obtained as follows. There is a subgroup $\tilde T$ isomorphic to $\ZZ_2^2$ of the group $A$ of characters of $Q$, so $\tilde T=\langle a,b\rangle$, with $a$ and $b$ of order $2$ and $\tilde\beta(a,b)=-1$. Consider the graded division algebra $\tilde\cD\cong M_2(\FF)$ generated by $X_a,X_b\in\SL_2(\FF)$, with $X_a^2=X_b^2=-1$ and $X_aX_b=\tilde\beta(a,b)X_bX_a=-X_bX_a$. Let $c=ab$ and $X_c\bydef X_aX_b$. Let $\tau$ be the involution on $\tilde\cD$ which fixes $X_a$ and $X_b$ (orthogonal involution). Finally, let $W$ be a free graded right $\tilde\cD$-module of dimension $5$ endowed with a nondegenerate homogeneous hermitian form $B:W\times W\rightarrow \tilde\cD$. A homogeneous basis $\{v_1,\ldots,v_5\}$ of $W$ as a right $\tilde\cD$-module can be selected, $\deg v_i=g_i$, such that the coordinate matrix of $B$ is (see Equation~\eqref{eq:Dr_Phi})
\[
\diag\bigl(X_{t_1},X_{t_2},X_{t_3},X_{t_4},X_{t_5}\bigr),
\]
with $t_1,\ldots,t_5\in\{e,a,b\}$ and with the extra condition (Equation \eqref{eq:Phi_compatibility}):
\begin{equation}\label{eq:g1g5}
g_1^2t_1=\cdots= g_5^2t_5.
\end{equation}
Moreover, the condition on the neutral component being trivial shows that the classes of $g_1,\ldots, g_5$ modulo $\tilde T$ are different.

Then $\frsu(W,B)\bydef\{x\in\End_{\tilde\cD}(W): B(xw_1,w_2)+B(w_1,xw_2)=0\ \forall w_1,w_2\in W\}$ is isomorphic to $\frso(V,q)$. The grading on $W$ induces a grading on $\frsu(W,B)$ and hence on $\frso(V,q)$.

Up to equivalence, we may assume (\cite[\S 3.6]{EK13} or \cite{EldFineClassical}) that $(t_1,\ldots,t_5)$ is one of the following:
\[
(e,e,e,e,e),\ (e,e,e,e,a),\ (e,e,e,a,a),\ (e,e,e,a,b),\ (e,e,a,a,b).
\]
The condition on $\pi_V\circ\Psi(Q)$ lying on $\Aut(\frso(V,q))^{\circ}\simeq \PSO(V,q)$, is equivalent to $ct_1t_2t_3t_4t_5=e$ \cite[Lemma~33]{EKpr}, which is only satisfied for $(e,e,e,a,b)$. We also may shift the grading on $W$ without changing the grading on $\frsu(W,B)$, so we may assume $g_1=e$. Therefore the support of the grading on $\frso(V,q)$, which by Lemma \ref{le:kerpiVpsi} coincides with the support of $\Gamma\subo$, is generated by the elements $g_2$, $g_3$, $g_4$, $g_5$ and $\tilde T$, with $e=g_2^2=g_3^2=g_4^2a=g_5^2b$ by \eqref{eq:g1g5}, and hence the grading subgroup of $\Gamma\subo$, which is the subgroup of $A$ given by the group of characters of $Q/Q\cap\ker\bigl(\pi_V\circ\Psi\bigr) =Q/\langle\theta\rangle$ is a quotient of $\ZZ_4^2\times\ZZ_2^2$, and it contains a copy of $\ZZ_4^2$ because $\pi_U\circ\Psi(Q)\cong\ZZ_4^2$. In particular, the grading on $\frso(V,q)$ is a coarsening of a unique fine grading with universal group $\ZZ_4^2\times\ZZ_2^2$ and hence $Q/\langle\theta\rangle$ is isomorphic to $\ZZ_4^2\times\ZZ_2^r$, with $r\leq 2$, so that $Q$ is isomorphic to $\ZZ_4^3\times\ZZ_2^r$.

But we know that there is a maximal abelian subgroup $\tilde Q$ of $\Aut(\frg)$ isomorphic to $\ZZ_4^3\times\ZZ_2^2$ (the case of the existence of automorphisms of order $4$ and type I) and this maximal subgroup contains automorphisms of order $4$ and type II (Proposition \ref{pr:4I_II}). By maximality, this is the only possibility, up to equivalence, so no new maximal finite abelian subgroups of $\Aut(\frg)$ appear in this subcase.

This finishes the proof of Theorem \ref{th:main_result}.

\begin{remark}
In this Case b), the grading on $\frsl(U)$ is determined by the grading on $\frso(V,q)$ and the condition that the Brauer invariant of $V^+$ is $[\End_\FF(U)]^{-1}$, with a division grading by $\ZZ_4^2$ on $\End_\FF(U)$. Alternatively, an analogue of Lemma \ref{le:Xts} works here, and this shows that $Q$ is determined by $\pi_V\circ\Psi(Q)$.
\end{remark}

\bigskip

\section{Models of the gradings}

Most of the fine gradings induced by the maximal finite abelian subgroups of $\Aut(\fre_8)$ have nice descriptions in terms of some nonassociative algebras and constructions of $\fre_8$ related to them. Here we will briefly review some of these models and point to suitable references.

Let $\cC$ be the Cayley algebra over our ground field $\FF$. The algebra $\cC$ is not associative, although any two elements generate an associative subalgebra. It can be obtained through the Cayley-Dickson doubling process in three steps, starting with $\FF$, in the same way as the classical real division algebra of the octonions is obtained from the reals:
\[
\RR\ \leq\ \CC=\RR\oplus\RR\bi\ \leq\ \HH=\CC\oplus\CC\bj\ \leq\ \OO=\HH\oplus\HH\bl.
\]
Each step gives a grading by $\ZZ_2$, and hence $\cC$ is graded by $\ZZ_2^3$ with all the homogeneous components of dimension $1$ (see \cite{Eld_GradingsOctonions} or \cite[Chapter~4]{EK13}).

The algebra $\cJ=\cH_3(\cC)$ of hermitian $3\times 3$-matrices over $\cC$, with the symmetrized product $X\circ Y=\frac{1}{2}(XY+YX)$, is  the only exceptional simple Jordan algebra, up to isomorphism. Among the fine gradings on $\cJ$ (see \cite{DM_F4} or \cite[Chapter~5]{EK13}), there is a fine grading by $\ZZ_3^3$ with all the homogeneous components of dimension $1$.

Then the Lie algebra obtained by means of a construction by Tits \cite{Tits66} on the vector space
\[
\cT(\cC,\cJ)=\Der(\cC)\oplus\bigl(\cC_0\otimes\cJ_0\bigr)\oplus\Der(\cJ),
\]
where $\cC_0$ (respectively $\cJ_0$) is the subspace of trace zero elements in $\cC$ (resp. $\cJ$), with suitable Lie bracket (see \cite[(6.14)]{EK13}), is the simple Lie algebra $\fre_8$. The Lie algebras of derivations $\Der(\cC)$ and $\Der(\cJ)$ are commuting subalgebras of $\cT(\cC,\cJ)$, and the fine gradings by $\ZZ_2^3$ on $\cC$ and by $\ZZ_3^3$ on $\cJ$ combine to give the fine grading by $\ZZ_2^3\times\ZZ_3^3\simeq \ZZ_6^3$ on $\fre_8$ in Theorem \ref{th:6}. The corresponding quasitorus appears also, in a completely different way, in \cite{Vogan}.

\smallskip

Now, let $\cQ=M_2(\FF)$ be the `quaternion algebra' over $\FF$, endowed with its natural (symplectic) involution. The algebra $\cH_4(\cQ)$ of hermitian $4\times 4$-matrices over $\cQ$ is a simple Jordan algebra. There is a Cayley-Dickson doubling process \cite{AF84}, similar to the one used to get $\cC$, that gives an algebra with involution on the direct sum of two copies of $\cH_4(\cQ)$:
\begin{equation}\label{eq:A}
\cA=\cH_4(\cQ)\oplus v\cH_4(\cQ).
\end{equation}
This algebra is a simple \emph{structurable} algebra of dimension $56$. The structurable algebras form a class of algebras with involution introduced in \cite{All78} that contains the classes of associative algebras with involutions and of Jordan algebras (with trivial involution).

Given any structurable algebra $\cX$ with involution $x\mapsto \bar x$, the \emph{Steinberg unitary Lie algebra} $\frstu_3(\cX)$ (see \cite{AF93}) is defined as the Lie algebra generated by symbols $u_{ij}(x)$, $1\leq i\ne j\leq 3$, $x\in\cX$, subject to the relations:
\[
\begin{split}
&u_{ij}(x)=u_{ji}(-\bar x),\\
&x\mapsto u_{ij}(x)\ \text{is linear},\\
&[u_{ij}(x),u_{jk}(y)]=u_{ik}(xy)\ \text{for distinct $i,j,k$.}
\end{split}
\]
There is a decomposition $\frstu_3(\cX)=\frs\oplus u_{12}(\cX)\oplus u_{23}(\cX)\oplus u_{31}(\cX)$, with $\frs=\sum_{i<j}[u_{ij}(\cX),u_{ij}(\cX)]$, which is a grading by $\ZZ_2^2$. Any grading by a group $G$ on the structurable algebra $\cX$ (compatible with the involution) induces naturally a grading by $\ZZ_2^2\times G$ on $\frstu_3(\cX)$. For the structurable algebra $\cA$ in \eqref{eq:A}, the Steinberg unitary Lie algebra $\frstu_3(\cA)$ is the simple Lie algebra of type $E_8$.

Moreover, $\cA$ is endowed with a $\ZZ_4^3$-grading where all homogeneous components have dimension at most $1$ (see \cite{AEKpr}). In this way, we obtain a fine grading by $\ZZ_2^2\times\ZZ_4^3$ on $\fre_8\simeq\frstu_3(\cA)$ which necessarily is, up to equivalence, the grading induced by the quasitorus in Theorem \ref{th:4I}.

Also, the Jordan algebra $\cH_4(\cQ)$ has a fine $\ZZ_4\times\ZZ_2^3$-grading (see \cite[\S 5.6]{EK13}), and this induces a fine grading by $\ZZ_4\times\ZZ_2^4$ on the structurable algebra $\cA$ in \eqref{eq:A}, and hence by $\ZZ_4\times\ZZ_2^6$ on $\fre_8\simeq\frstu_3(\cA)$. This is the fine grading in Theorem \ref{th:4II}.

\smallskip

As for the fine gradings by elementary abelian groups:
\begin{itemize}
\item The fine $\ZZ_2^8$-grading is induced on $\fre_8=\cT(\cC,\cJ)$ by the fine $\ZZ_2^3$-grading on $\cC$ and a fine $\ZZ_2^5$-grading on $\cJ$. Alternatively it is obtained by means of the natural $\ZZ_2^6$-grading on $\cC\otimes\cC$, which is a structurable algebra so it induces a grading by $\ZZ_2^2\times\ZZ_2^6=\ZZ_2^8$ on $\frstu_3(\cC\otimes\cC)$, and this is again isomorphic to $\fre_8$.

\item The Jordan algebra $\cH_4(\cQ)$ is also endowed with a fine $\ZZ_2^6$-grading that gives a fine $\ZZ_2^7$-grading on $\cA$ and hence a fine $\ZZ_2^9$-grading on $\frstu_3(\cA)=\fre_8$.

\item The \emph{Okubo algebra} $\cO$ is an eight-dimensional symmetric composition algebra endowed with a fine $\ZZ_3^2$-grading. The Lie algebra $\fre_8$ can be constructed on the vector space
    \[
    \bigl(\tri(\cO)\oplus\tri(\cO)\bigr)\oplus\bigl(\bigoplus_{i=1}^3\iota_i(\cO\otimes\cO)\bigr)
    \]
    with a natural bracket, where each $\iota_i(\cO\otimes\cO)$ is a copy of the tensor product $\cO\otimes\cO$ and where
    \[
    \tri(\cO)=\{(d_1,d_2,d_3)\in\frso(\cO): d_1(xy)=d_2(x)y+xd_3(y)\ \forall x,y\}
    \]
    is the \emph{triality Lie algebra} of $\cO$ (with componentwise bracket). There is a natural order $3$ automorphism $\Theta$ that permutes cyclically the three copies of $\cO\otimes\cO$ and the components in each copy of $\tri(\cO)$. This automorphism $\Theta$ induces a $\ZZ_3$-grading compatible with the $\ZZ_3^4$-grading on $\cO\otimes\cO$ obtained by combining the $\ZZ_3^2$-grading on each copy of $\cO$. In this way we obtain the fine $\ZZ_3^5$-grading on $\fre_8$. (See \cite{Eld_gradings_symmetric} and the references therein for details.)

\item Finally, the fine $\ZZ_5^3$-grading is an example of a \emph{Jordan grading} (the maximal quasitorus is a \emph{Jordan subgroup} \cite{Alekse}). A specific model for this grading, which does not rely on nonassociative algebras, is given in \cite{Eld_Jordan}.
\end{itemize}


\bigskip



\begin{thebibliography}{ABG02}

\bibitem[Ale74]{Alekse}
A.V.~Alekseevskii, \emph{Jordan finite commutative subgroups of simple complex Lie groups}, Funktsional. Anal. i Prilozhen. \textbf{8} (4) (1974) 1--4. English translation: Funct. Anal. Appl. \textbf{8} (4) (1974) 277--279.

\bibitem[All78]{All78}
B.N. Allison, \emph{A class of nonassociative algebras with involution containing the class of Jordan algebras}, Math. Ann. \textbf{237} (1978), 133--156.

\bibitem[AF84]{AF84}
B.N. Allison and J.R. Faulkner, \emph{A Cayley--Dickson process for a class of structurable algebras}, Trans. Amer. Math. Soc. \textbf{283} (1984), no. 1,  185--210.

\bibitem[AF93]{AF93}
B.N. Allison and J.R. Faulkner, \emph{Nonassociative coefficient algebras for Steinberg unitary Lie algebras},
J. Algebra \textbf{161} (1993), no.~1, 1--19.

\bibitem[AEKpr]{AEKpr}
D.~Aranda-Orna, A.~Elduque and M.~Kochetov,
\emph{A $\ZZ_4^3$-grading on a $56$-dimensional simple structurable algebra and related fine gradings on the simple Lie algebras of type $E$}, arXiv:1312.1067.

\bibitem[BT09]{BT_G2}
Y.A.~Bahturin and M.~Tvalavadze, \emph{Group gradings on $G_2$}, Comm. Algebra \textbf{37} (2009), no.~3, 885--893.

\bibitem[Dra12]{Cris_NonCompu}
C.~Draper, \emph{A non-computational approach to the gradings on $\frf_4$}, Rev. Mat. Iberoam. \textbf{28} (2012), no.~1, 273--296.

\bibitem[DM06]{DM_G2}
C.~Draper and C.~Mart\'{\i}n-Gonz\'alez, \emph{Gradings on $\frg_2$}, Linear Algebra Appl. \textbf{418} (2006), no.~1, 85--111.

\bibitem[DM09]{DM_F4}
C.~Draper and C.~Mart\'{\i}n-Gonz\'alez, \emph{Gradings on the Albert algebra and on $\mathfrak{f}_4$}, Rev. Mat. Iberoam. \textbf{25} (2009), no.~3, 841--908.

\bibitem[DVpr]{DV}
C.~Draper and A.~Viruel, \emph{Fine gradings on $\mathfrak{e}_6$}, arXiv:1207.6690v1.

\bibitem[Eld98]{Eld_GradingsOctonions}
A.~Elduque, \emph{Gradings on octonions}, J.~Algebra \textbf{207} (1998), no.~1, 342--354.

\bibitem[Eld09a]{Eld_Jordan}
A.~Elduque, \emph{Jordan gradings on exceptional simple Lie algebras}, Proc. Amer. Math. Soc. \textbf{137} (2009), no.~12, 4007--4017.

\bibitem[Eld09b]{Eld_gradings_symmetric}
A.~Elduque,
\emph{Gradings on symmetric composition algebras},
J.~Algebra \textbf{322} (2009), no. 10, 3542--3579.

\bibitem[Eld10]{EldFineClassical} Elduque, A. \emph{Fine gradings on simple classical {L}ie algebras}. J. Algebra \textbf{324} (2010), no.~12, 3532--3571.

\bibitem[Eld13]{Eld_Fine}
A.~Elduque, \emph{Fine gradings and gradings by root systems on simple Lie algebras}, to appear in Revista Mat. Iberoamericana, arXiv:1303.0651v1.


\bibitem[EK12]{EK}
A.~Elduque and M.~Kochetov, \emph{Gradings on the exceptional Lie algebras $F_4$ and $G_2$ revisited}, Rev. Mat. Iberoam. \textbf{28} (2012), no.~3, 773--813.

\bibitem[EK13]{EK13} Elduque, A. and Kochetov, M. \emph{Gradings on simple {L}ie algebras}. Mathematical Surveys and Monographs \textbf{189},  American Mathematical Society, Providence, RI, 2013.

\bibitem[EKpr]{EKpr}
A.~Elduque and M.~Kochetov, \emph{Graded modules over classical simple Lie algebras with a grading}, arXiv:1308.6089v1. To appear in Israel J. Math.


\bibitem[GOV94]{GOV}
V.-V.~Gorbatsevich, A.-L.~Onishchik and E.-B.~Vinberg,
\emph{Structure of Lie groups and Lie algebras}, in \emph{Lie groups and Lie algebras III}, Encyclopaedia of Mathematical Sciences, \textbf{41}. Springer-Verlag, Berlin, 1994.

\bibitem[Gri91]{Griess}
R.L.~Griess Jr., \emph{Elementary abelian $p$-subgroups of algebraic groups},
Geom. Dedicata \textbf{39} (1991), no.~3, 253--305.

\bibitem[HVpr]{Vogan}
G.~Han and D.A.~Vogan Jr., \emph{Finite maximal tori}. Preprint.


\bibitem[Hum75]{Hum}
J.E.~Humphreys, \emph{Linear algebraic groups}, Graduate Texts in Mathematics \textbf{21}. Springer-Verlag, New York-Heidelberg, 1975, xiv+247 pp.

\bibitem[Hum95]{HumCC}
J.E.~Humphreys, \emph{Conjugacy classes in semisimple algebraic groups}, Mathematical Surveys and Monographs, \textbf{43}. American Mathematical Society, Providence, RI, 1995, xviii+196 pp.


\bibitem[Kac90]{Kac}
V.G.~Kac, \emph{Infinite-dimensional Lie algebras}, Third edition. Cambridge University Press, Cambridge, 1990. xxii+400 pp.

\bibitem[PZ89]{PZ}
J.~Patera and H.~Zassenhaus, \emph{On {L}ie gradings. {I}},
    Linear Algebra Appl. \textbf{112} (1989), 87--159.

\bibitem[Tit66]{Tits66}
J.~Tits, \emph{Alg\`ebres alternatives, alg\`ebres de {J}ordan et
alg\`ebres de
  {L}ie exceptionnelles. {I}. {C}onstruction}, Nederl. Akad. Wetensch. Proc.
  Ser. A \textbf{69} = Indag. Math. \textbf{28} (1966), 223--237.

\bibitem[Yupr]{Yu}
J.~Yu, \emph{Maximal abelian subgroups of compact simple Lie groups}, arXiv:1211.1334v1.



\end{thebibliography}
\end{document}